\newcolumntype{d}[1]{D{.}{.}{#1}}
\newcommand{\rrvert}{\vert}
\newcommand{\llvert}{\vert}
\newcommand{\eqref}[1]{(\ref{#1})}
\newtheorem{theorem}{Theorem}
\newtheorem{lemma}{Lemma}
\newtheorem{proposition}{Proposition}
\begin{document}
\begin{frontmatter}

\title{Steady-state simulation of reflected Brownian motion and related
stochastic networks\thanksref{T1}}
\runtitle{Steady-state simulation of reflected Brownian motion}

\thankstext{T1}{Supported in part by the Grants NSF-CMMI-0846816
and NSF-CMMI-1069064.}

\begin{aug}
\author[A]{\fnms{Jose} \snm{Blanchet}\corref{}\ead[label=e1]{jose.blanchet@columbia.edu}}
\and
\author[B]{\fnms{Xinyun} \snm{Chen}\ead[label=e2]{xinyun.chen@stonybrook.edu}}
\runauthor{J. Blanchet and X. Chen}
\affiliation{Columbia University and Stony Brook University}
\address[A]{Industrial Engineering\\
\quad and Operations Research\\
Columbia University\\
340 S. W. Mudd Building\\
500 W. 120 Street\\
New York, New York 10027\\
USA\\
\printead{e1}}
\address[B]{Department of Applied Mathematics\\
\quad and Statistics\\
Stony Brook University\\
Math Tower B148\\
Stony Brook, New York 11794-3600\\
USA\\
\printead{e2}}
\end{aug}
%
%
\received{\smonth{1} \syear{2012}}
%
\revised{\smonth{9} \syear{2014}}

%
\begin{abstract}
This paper develops the first class of algorithms that enable unbiased
estimation of steady-state expectations for multidimensional reflected
Brownian motion. In order to explain our ideas, we first consider the
case of compound Poisson (possibly Markov modulated) input. In this
case, we analyze the complexity of our
procedure as the dimension of the network increases and show that, under
certain assumptions, the algorithm has polynomial-expected termination
time. Our methodology includes procedures that are of
interest beyond steady-state simulation and reflected processes. For instance,
we use wavelets to construct a piecewise linear function that can be
guaranteed to be within $\varepsilon$ distance (deterministic) in the uniform
norm to Brownian motion in any compact time interval.
\end{abstract}

%
\begin{keyword}[class=AMS]
\kwd{60J65}
\kwd{65C05}
\end{keyword}
\begin{keyword}
\kwd{Reflected Brownian motion}
\kwd{steady-state simulation}
\kwd{dominated coupling from the past}
\kwd{wavelet representation}
\end{keyword}
\end{frontmatter}

\section{Introduction}\label{sec1}

This paper studies simulation methodology that allows estimation, without
any bias, of steady-state expectations of multidimensional reflected
processes. Our algorithms are presented with companion rates of
\mbox{convergence}. Multidimensional reflected processes, as we shall explain,
are very
important for the analysis of stochastic queueing networks. However, in
order to
motivate the models that we study, let us quickly review a formulation
introduced by \citet{Kella1996}.

Consider a network of $d$ queueing stations indexed by $\{1,2,\ldots,d\}$.
Suppose that jobs arrive to the network according to a Poisson process with
rate $\lambda$, denoted by $(N ( t ) \dvtx t\geq0)$. Specifically,
the $%
k $th arrival brings a vector of job requirements $\mathbf{W} (
k ) = ( W_{1} ( k ),\ldots,W_{d} ( k )
 ) ^{T}$
which are nonnegative random variables (r.v.'s), and they add to the
workload at each station right at the moment of arrival. So if the $k$th
arrival occurs at time $t$, the workload of the $i$th station (for $%
i\in\{1,\ldots,d\}$) increases by $W_{i} ( k ) $ units right
at time $%
t $. We assume that $\mathbf{W}=(\mathbf{W} ( k ) \dvtx k\geq1$) is a
sequence of i.i.d. (independent and identically distributed) nonnegative
r.v.'s. For fixed $k$, the coordinates of $\mathbf{W} ( k ) $ are
not necessarily independent; however, $\mathbf{W}$ is assumed to be
independent of $N ( \cdot ) $.

Throughout the paper we shall use boldface to write vector quantities, which
are encoded as columns. For instance, we write $\mathbf{y}= (
y_{1},\ldots,y_{d} ) ^{T}$.

The total amount of external work that arrives to the $i$th station up
to (and
including) time $t$ is denoted by
\[
J_{i} ( t ) =\sum_{k=1}^{N ( t ) }W_{i}
( k ).
\]

Now, assume that the workload at the $i$th station is processed as a fluid
by the server at a rate $r_{i}$, continuously in time. This means that if
the workload in the~$i$th station remains strictly positive during the time
interval $[t,t+dt]$, then the output from station $i$ during this time
interval equals $r_{i}\,dt$. In addition, suppose that a proportion $%
Q_{i,j}\geq0$ of the fluid processed by the $i$th station is circulated to
the $j$th server. We have that $\sum_{j=1}^{d}Q_{i,j}\leq1$, $Q_{i,i}=0$,
and we define $Q_{i,0}=1-\sum_{j=1}^{d}Q_{i,j}$. The proportion $Q_{i,0}$
corresponds to the fluid that goes out of the network from station $i$.

The dynamics stated in the previous paragraph are expressed formally by a
differential equation as follows. Let $Y_{i} ( t ) $ denote the
workload content of the $i$th station at time $t$. Then for given
$Y_{i} (
0 ) $, we have
%
\begin{eqnarray}
\label{S1b} dY_{i} ( t ) & =&dJ_{i} ( t ) -
r_{i}I \bigl( Y_{i} ( t ) >0 \bigr) \,dt+\sum
_{j:j\neq i}Q_{j,i}r_{j}I \bigl(
Y_{j} ( t ) >0 \bigr) \,dt
\nonumber
\\
& =&dJ_{i} ( t ) -r_{i}\,dt+\sum
_{j:j\neq i}Q_{j,i}r_{j}\,dt
\\
&&{} +r_{i}I \bigl( Y_{i} ( t ) =0 \bigr) \,dt-\sum
_{j:j\neq
i}Q_{j,i}r_{j}I \bigl(
Y_{j} ( t ) =0 \bigr) \,dt\nonumber
\end{eqnarray}
for $i\in\{1,\ldots,d\}$. It is well known that the resulting vector-valued
workload process, $\mathbf{Y} ( t ) = ( Y_{1} ( t ),\ldots,Y_{d} ( t )  ) ^{T}$, is Markovian. The differential
equation (\ref{S1b}) admits a unique piecewise linear solution that is
right-continuous and has left limits (RCLL). This can be established by
elementary methods, and we shall comment on far-reaching extensions shortly.

The equations given in (\ref{S1b}) take a neat form in matrix notation. This
notation is convenient when examing stability issues and other topics which
are related to the steady-state simulation problem we investigate.
In particular, let $\mathbf{r}= ( r_{1},\ldots,r_{d} ) ^{T}$ be
the column vector corresponding to the service rates, write $R= (
I-Q ) ^{T}$ and define
\[
\mathbf{X} ( t ) =\mathbf{J} ( t ) -R\mathbf{r}t,
\]
where $\mathbf{J} ( t ) $ is a column vector with its $i$th
coordinate equal to $J_{i} ( t ) $. Then
equation~(\ref{S1b}) can be expressed as
%
\begin{equation}
\mathbf{Y} ( t ) =\mathbf{Y} ( 0 ) +\mathbf{X} ( t ) +R\mathbf{L} ( t ),
\label{SP1}
\end{equation}
where $\mathbf{L} ( t ) $ is a column vector with its $i$th
coordinate equal to
\[
L_{i} ( t ) =\int_{0}^{t}r_{i}I
\bigl( Y_{i} ( s ) =0 \bigr) \,ds.
\]

As mentioned earlier, $\mathbf{Y=(Y} ( t ) \dvtx t\geq0)$ is a Markov
process. Let us assume that $Q^{n}\rightarrow0$ as $%
n\rightarrow\infty$. This assumption is synonymous with the assumption
that the network
is open. In detail, for each $i$ such that $\lambda_{i}>0$, there
exists a path ($i_{1},i_{2},\ldots,i_{k})$
satisfying that $\lambda
_{i}Q_{i,i_{1}}Q_{i_{1},i_{2}}\cdots Q_{i_{k-1},i_{k}}>0$ with $i_{k}=0$
and $k\leq d$. In addition, under this assumption the matrix $R^{-1}$ exists and has
nonnegative coordinates. To ensure stability, we assume that
$R^{-1}E\mathbf{X} (1 ) <0$---inequalities involving vectors are understood
coordinate-wise throughout the paper. It follows from Theorem~2.4 of %
\citet{KellaRamasubramanian2012} that $\mathbf{Y} ( t )$
converges in distribution to $\mathbf{Y}%
 ( \infty ) $ as $t\rightarrow\infty$, where $\mathbf{Y} (
\infty ) $ is an r.v. with the (unique) stationary distribution of
$%
\mathbf{Y} ( \cdot ) $.

The first contribution of this paper is that we develop an exact sampling
algorithm (i.e., simulation without bias) for $\mathbf{Y} ( \infty
 ) $. This algorithm is developed in Section~\ref{SecCompPoi}
of this paper under the assumption that $\mathbf{W} ( k ) $
has a finite moment-generating function. In addition, we analyze the
order of computational complexity
(measured in terms of expected random numbers generated) of our
algorithm as $d$ increases, and we show that it is
polynomially bounded.

Moreover, we extend our exact sampling algorithm to the case in
which there is an independent Markov chain driving the arrival rates, the
service rates, and the distribution of job sizes at the time of arrivals.
This extension is discussed in Section~\ref{SectionExtension}.

The workload process $ ( \mathbf{Y} ( t ) \dvtx t\geq0 )
$ is
a particular case of a reflected (or constrained) stochastic network.
Although the models introduced in the previous paragraphs are
interesting in
their own right, our main interest is the steady-state
simulation techniques for reflected Brownian motion. These techniques
are obtained by
abstracting the construction formulated in (\ref{SP1}). This
abstraction is
presented in terms of a Skorokhod problem, which we describe as
follows. Let $\mathbf{X=} ( \mathbf{X}%
 ( t ) \dvtx t\geq0 ) $ with $\mathbf{X}(0)\geq0$, and $R$
be an $M$-matrix $R$ so that the inverse $R^{-1}$ exists and has
nonnegative coordinates. To solve the Skorokhod problem requires
finding a pair of processes $ (
\mathbf{Y,L} ) $ satisfying equation (\ref{SP1}), subject to:
\begin{longlist}[(iii)]
\item[(i)] $\mathbf{Y} ( t ) \geq0$ for each $t$,

\item[(ii)] $L_{i} ( \cdot ) $ nondecreasing for each $i\in\{
1,\ldots, d\}$ and
$L_{i} ( 0 ) =0$,

\item[(iii)] $\int_{0}^{t}Y_{i} ( s ) \,dL_{i} ( s ) =0$ for
each $t$.
\end{longlist}

Eventually we shall take the input process $\mathbf{X} ( \cdot
) $ as a
Brownian motion with constant drift $\mathbf{v}=E\mathbf{X} (
1 ) $
and nondegenerate covariance matrix $\Sigma$. There
then exists a strong solution (i.e., path-by-path and not only in law)
to the stochastic differential equation (SDE) (\ref{SP1}) subject to the
Skorokhod problem constraints (i) to (iii), and the initial condition
$\mathbf{%
Y} ( 0 ) $. This was proved by \citet{HarrisonReiman1981}, who
introduced the notion of reflected Brownian motion (RBM). When $R$ is
an $M$-matrix, $R^{-1}\bolds{\mu}<0$ is a necessary and
sufficient condition for the stability of an RBM; see \citet
{HarrisonWilliams1987}.
Our algorithm for the RBM is motivated by the
fact that in great generality (i.e., only requiring the existence of
variances of
service times and inter-arrival times), the so-called generalized Jackson
networks (which are single-server queues connected with Markovian routing)
converge weakly to a reflected Brownian motion in a heavy traffic asymptotic
environment as in \citet{Reiman1984}. Moreover, recent papers from
\citet{GamarnikZeevi2006} and \citet{BudhirajaLee2009} have
shown that convergence occurs also at the level of steady-state
distributions. Therefore, reflected Brownian motion
(RBM) plays a central role in queueing theory.


%

The second contribution of this paper is the development of an
algorithm that allows estimation with no bias of $E [g ( \mathbf
{Y} (
\infty )  ) ] $ for positive and continuous functions $%
g ( \cdot ) $. Moreover, given $\varepsilon>0$,
we provide a simulation algorithm that outputs a random variable
$\mathbf{Y}%
_{\varepsilon} ( \infty ) $ that can be guaranteed to be
within $\varepsilon$ distance (say in the Euclidian norm) from an
unbiased sample $\mathbf{Y} ( \infty ) $ from the
steady-state distribution of RBM. This contribution is developed in
Section~\ref{SecRBM} of this paper. We show that the number of Gaussian random
variables generated to produce $\mathbf{Y}_{\varepsilon} ( \infty
 ) $ is of order $O(\varepsilon^{-a_{C}-2}\log(1/\varepsilon))$
as $\varepsilon\searrow0$, where $a_{C}$ is a
constant only depending on the covariance matrix of the Brownian
motion; see
Section~\ref{SubSecCCRBM}. In the special case when the $d$-dimensional
Brownian motion has nonnegative correlations, the number of random
variables generated is of order $O(\varepsilon^{-d-2}\log
(1/\varepsilon
)) $.

Our methods allow estimation without bias of $E [g ( \mathbf
{Y} (
t_{1} ),\mathbf{Y} ( t_{2} ),\ldots,\break \mathbf{Y} (
t_{m} )  )  ]$ for a positive function $g ( \cdot
 ) $
continuous almost everywhere and for any $0<t_{1}<t_{2}<\cdots<t_{m}$.
Simulation of RBM has been studied in the literature. In the one-dimensional
setting it is not difficult to sample RBM exactly; this follows, for
instance, from the methods in \citet{Devroye2009}. The paper of %
\citet{Asmussenetal1995} also studies the one-dimensional case and
provides an enhanced Euler-type scheme with an improved convergence rate.
The work of \citet{BurdzyChen2008} provides approximations of reflected
Brownian motion with orthogonal reflection (the case in which $R=I$).

With regard to steady-state computations, the work of %
\citet{DaiHarrison1992} provides numerical methods for approximating the
steady-state expectation by numerically evaluating the density of
$\mathbf{Y}%
 ( \infty ) $. In contrast to our methods, Dai and Harrison's
procedure is based on projections in mean-squared norm with respect to a
suitable reference measure. Since such an algorithm is nonrandomized,
it is
therefore, in some sense, preferable to simulation approaches, which are
necessarily randomized. However, the theoretical justification of Dai and
Harrison's algorithm relies on a conjecture that is believed to be true but
has not been rigorously established; see \citet{DaiDieker2011}. In addition,
no rate of convergence is known for this procedure, even assuming that the
conjecture is true.

Finally, we briefly discuss some features of our procedure and our strategy
at a high level. There are two sources of bias that arise in the
setting of
steady-state simulation of RBM. First, discretization error in the
simulation of the process $\mathbf{Y}$ is inevitable due to the
continuous nature of Brownian motion, especially when the reflection
matrix $R$ is not the identity. This issue is present even in finite
time horizon. The
second issue is, naturally, that we are concerned with steady-state
expectations which inherently involve, in principle, an infinite time
horizon.

In order to concentrate on removing the bias issues arising from the
infinite horizon, we first consider the reflected compound Poisson case
where we can simulate the solution of the
Skorokhod problem in any finite interval exactly and without any bias.
Our strategy is based on the dominated coupling from the past
(DCFTP). This technique was proposed by \citet{Kendall2004}, following the
introduction of coupling from the past by \citet{ProppWilson1996}. The
idea behind DCFTP is to construct suitable upper- and lower-bound
processes that can be simulated in stationarity and backward in time.
We take the lower bound to be the process identically equal to zero. We use
results from \citet{HarrisonWilliams1987} (for the RBM) and \citet
{Kella1996} (for the reflected compound Poisson process), to construct
an upper bound process based on the solution of the Skorokhod
problem with reflection matrix $R=I$. It turns out that simulation of the
stationary upper-bound process backward involves sampling the infinite
horizon maximum (coordinate-wise) from $t$ to infinity of a $d$-dimensional compound Poisson Process with negative drift. We use sequential
acceptance/rejection techniques (based on a exponential tilting
distributions used in rare-event simulation) to simulate from an infinite
horizon maximum process.

Then we turn to RBM. A problem that arises, in addition to the
discretization error given the continuous nature of Brownian motion, is the
fact that in dimensions higher than one (as in our setting) RBM never
reaches the origin. Nevertheless, it will be arbitrarily close to the origin,
and we shall certainly leverage off this property to obtain simulation that
is guaranteed to be $\varepsilon$-close to a genuine steady-state sample. Now
in order to deal with the discretization error we use wavelet-based
techniques. We take advantage of a well-known wavelet construction of
Brownian motion; see \citet{Steele2001}.

Instead of simply simulating Brownian motion using the wavelets,
which is the standard practice, we simulate the wavelet coefficients jointly
with suitably defined random times. Consequently, we are able to guarantee
with probability one that our wavelet approximation is $\varepsilon
$-close in
the uniform metric to Brownian motion in any compact time interval (note
that $\varepsilon$ is deterministic and defined by the user; see Section~\ref%
{SubWave}).

Finally, we use the following fact. Let process $\mathbf{Y}$ be the
solution to the Skorokhod problem. Then the process $\mathbf{Y}$, as a
function of the input process $%
\mathbf{X}$, is Lipschitz continuous with a computable Lipschitz constant,
under the uniform topology. These observations combined with an
additional randomization, in the spirit of \citet{Beskosetal2012}, allow
estimation with no bias of the steady-state expectation.

We strongly believe that the use of tolerance-enforced coupling based on
wavelet constructions, as we illustrate here, can be extended more broadly
in the numerical analysis of the Skorokhod and related problems.

We perform some numerical experiments to validate
our algorithms. Our results are reported in Section~\ref%
{SectionNumerics}. Further numerical experiments are pursued in a companion
paper, in which we also discuss further implementation issues and some
adaptations, which are specially important in the case of RBM.

The rest of the paper is organized as follows: in Section~\ref{SecCompPoi},
we consider the problem of exact simulation from the steady-state
distribution of the reflected compound Poisson process discussed
earlier; we then
show how our procedure is adapted without major complications to
Markov-modulated input in Section~\ref{SectionExtension}; in
Section~\ref{SecRBM}, we continue explaining the main strategy to be
used for the
reflected Brownian motion case; finally, the numerical
experiments are given in Section~\ref{SectionNumerics}.

\section{Exact simulation of reflected compound Poisson processes}\label%
{SecCompPoi}

The model that we consider has been explained at the beginning of the
\hyperref[sec1]{Introduction}. We summarize the assumptions that we shall impose next.

\emph{Assumptions}:

(A1) the matrix $R$ is an $M$-matrix;

(A2) $R^{-1}E\mathbf{X} ( 1 ) <0$ (recall that inequalities apply
coordinate-wise for vectors);

(A3) there exists $\bolds{\theta}>0$, $\bolds{\theta}\in
\mathbb{R}^{d}$ such that
\[
E\bigl[\exp \bigl( \bolds{\theta}^{T}\mathbf{W} ( k ) \bigr)
\bigr] <\infty.
\]

We have commented on (A1) and (A2) in the \hyperref[sec1]{Introduction}. Assumption (A3) is
important in order to do exponential tilting when we simulate a
stationary version of the upper-bound process.

In addition to (A1) to (A3), we shall assume that one can simulate from
exponential tilting distributions associated to the marginal
distribution of
$\mathbf{W} ( k ) $. That is, we can simulate from $%
P_{\theta_{i}} ( \cdot ) $ such that
\begin{eqnarray*}
&& P_{\theta_{i}} \bigl( W_{1} ( k ) \in dy_{1},
\ldots,W_{d} ( k ) \in dy_{d} \bigr)
\\
&&\qquad=  \frac{\exp ( \theta_{i}y_{i} ) }{E\exp (
\theta_{i}W_{i} ( k )  ) }P \bigl( W_{1} ( k ) \in dy_{1},
\ldots,W_{d} ( k ) \in dy_{d} \bigr),
\end{eqnarray*}
where $\theta_{i}\in\mathbb{R}$ and $E\exp ( \theta_{i}W_{i} (
k )  ) <\infty$. We will determine the value of $\theta_i$ through
assumption (A3b), as given below.

Let us briefly explain our program, which is based on DCFTP. First, we will
construct a \textit{stationary} dominating process $ ( \mathbf{Y}%
^{+} ( s ) \dvtx -\infty<s\leq0 ) $ that is \emph{coupled}
with our
target process, that is, a stationary version of the process $ (
\mathbf{Y} ( s ) \dvtx -\infty<s\leq0 ) $ satisfying the Skorokhod
problem (\ref{SP1}). Under coupling, the dominating process satisfies
%
\begin{equation}
R^{-1}\mathbf{Y} ( s ) \leq R^{-1}\mathbf{Y}^{+}
( s ), \label{Dom}
\end{equation}
for each $s\leq0$. We then simulate the process $\mathbf{Y}^{+} (
\cdot ) $ backward up to a time $-\tau\leq0$ such that $\mathbf{Y}%
^{+} ( -\tau ) =0$. Following the tradition of the CFTP literature,
we call a time $-\tau$ such that $\mathbf{Y}^{+} ( -\tau ) =0$ a
coalescence time. Since $\mathbf{Y} ( s ) \geq0$, inequality
(\ref%
{Dom}) yields $\mathbf{Y} ( -\tau ) =0$. The next and final
step in our strategy is to evolve the solution $\mathbf{Y}(s)$ of the
Skorokhod problem (\ref{SP1}) forward from $s=-\tau$ to $s=0$ with
$\mathbf{%
Y}(-\tau)=0$, \textit{using the same input that drives the
construction of}
$\mathbf{(Y}^+ ( s ) \dvtx -\tau\leq s\leq0)$ so that $\mathbf{Y}$
and $%
\mathbf{Y}^+$ are coupled. The output is therefore $\mathbf{Y} (
0 ) $, which is stationary. The precise algorithm will be
summarized in Section~\ref{Subsec_Main1}.

So, a crucial part of the whole plan is the construction of $\mathbf{Y}%
^{+} ( \cdot ) $ together with a coupling that guarantees
inequality (\ref{Dom}). In addition, the coupling must be such that one can
use the driving randomness that defines $\mathbf{Y}^{+} ( \cdot
 ) $
directly as an input to the Skorokhod problem (\ref{SP1}) that is then used
to evolve $\mathbf{Y}^{+} ( \cdot ) $. We shall first start by
constructing a time reversed stationary version of a suitable dominating
process~$\mathbf{Y}^{+}$.

\subsection{Construction of the dominating process}\label{sec2.1}

In order to construct the dominating process $\mathbf{Y}^{+} (
\cdot )$, we first need the following result attributed to %
\citet{Kella1996} (Lemma~3.1).

\begin{lemma}
\label{LmK_W_Comp}There exists $\mathbf{z}$ such that $E\mathbf{X} (
1 ) <\mathbf{z}$ and $R^{-1}\mathbf{z}<0$. Moreover, if
\[
\mathbf{Z} ( t ) =\mathbf{X} ( t ) -\mathbf{z}t,
\]
and $\mathbf{Y}^{+} ( \cdot ) $ is the solution to the
Skorokhod problem
%
\begin{eqnarray}\label{SP_Bnd}
d\mathbf{Y}^{+} ( t ) & =&d\mathbf{Z} ( t ) +d\mathbf
{L}%
^{+} ( t ),\qquad \mathbf{Y}^{+} ( 0 ) =
\mathbf{y}_{0},
\nonumber
\\[-8pt]
\\[-8pt]
\nonumber
\mathbf{Y}^{+} ( t ) & \geq&0,\qquad Y_{j}^{+}
( t ) \,dL_{j}^{+} ( t )=0,\qquad L _{j}^{+}
( 0 ) =0, %
\qquad dL_{j}^{+} ( t ) \geq0,
\end{eqnarray}
then $0\leq R^{-1}\mathbf{Y} ( t ) \leq R^{-1}\mathbf
{Y}^{+} (
t ) $ for all $t\geq0$ where $\mathbf{Y} ( \cdot ) $ solves
the Skorokhod problem
\begin{eqnarray*}
d\mathbf{Y} ( t ) & =&d\mathbf{X} ( t ) +R\,d\mathbf {L} ( t ),\qquad \mathbf{Y} (
0 ) =\mathbf{y}_{0},
\\
\mathbf{Y} ( t ) & \geq& 0,\qquad Y_{j} ( t ) \,dL_{j} ( t )
=0,\qquad L_{j} ( 0 ) =0,\qquad dL_{j} ( t ) \geq0.
\end{eqnarray*}
\end{lemma}

We note that computing $\mathbf{z}$ from the previous lemma is not
difficult. One can simply pick $\mathbf{z}=E\mathbf{X} ( 1 )
+\delta\mathbf{1}$, where $\mathbf{1}= ( 1,\ldots,1 ) ^{T}$
and with $\delta$
chosen so that $0<\delta R^{-1}\mathbf{1}<-R^{-1}E\mathbf{X} (
1 ) $. In what follows we shall assume that $\mathbf{z}$ has been selected in
this form, and we shall assume without loss of generality that
$E[\mathbf{Z}%
(1)]<0$.

The Skorokhod problem corresponding to the dominating process can be solved
explicitly. It is not difficult to verify [see, e.g., %
\citet{HarrisonReiman1981}] that if $\mathbf{Y}^{+} ( 0 ) =0$,
the solution of the Skorokhod problem (\ref{SP_Bnd}) is given by
%
\begin{equation}
\mathbf{Y}^{+} ( t ) =\mathbf{Z} ( t ) -\min_{0\leq
u\leq
t}
\mathbf{Z} ( u ) =\max_{0\leq u\leq t}\bigl(\mathbf{Z} ( t ) -%
\mathbf{Z} ( u ) \bigr), \label{SKR_solution}
\end{equation}
where the running maximum is obtained coordinate-by-coordinate.

In order to construct a stationary version of $\mathbf{Y}^{+} (
\cdot ) $ backward in time, we first extend $\mathbf{Z} (
\cdot ) $ to a two-sided compound Poisson process with $\mathbf{Z}%
 ( 0 ) =0$. We define a time-reversal of $\mathbf{Z}(\cdot)$
as $%
\mathbf{Z}^{\leftarrow} ( t ) =-\mathbf{Z} ( -t ) $.
It is
easy to check that $\mathbf{Z}^{\leftarrow} ( \cdot ) $ has
stationary and independent increments that are identically distributed as
those of~$\mathbf{Z}(\cdot)$.

For any given $T\leq0$, we define a process $\mathbf{Z}_{T}^{\leftarrow}$
via $\mathbf{Z}_{T}^{\leftarrow} ( t ) =\mathbf{Z}%
^{\leftarrow} ( T+t ) $ for $0\leq t\leq|T|$. And for any
given $%
\mathbf{y}\geq0$ we define $\mathbf{Y}_{T}^{+} ( t,\mathbf{y} )$
for $0\leq t\leq|T|$ to be the solution to the Skorokhod problem with input
process $\mathbf{Z}_T^{\leftarrow}$, initial condition $\mathbf
{Y}_T^+(0,%
\mathbf{y})=\mathbf{y}$ and reflection matrix $R=I$. In detail, $\mathbf
{Y}%
_{T}^{+} ( \cdot,\mathbf{y} )$ solves
%
\begin{eqnarray}\label{Ypls_def}
d\mathbf{Y}_{T}^{+} ( t,\mathbf{y} ) &=&d
\mathbf{Z}%
_{T}^{\leftarrow} ( t ) +d\mathbf{L}_{T}^{+}
( t,\mathbf{y} 
 ),\qquad \mathbf{Y}_{T}^{+}( 0,\mathbf{y}) =\mathbf{y},
\nonumber
\\
\mathbf{Y}_T^{+} ( t,\mathbf{y} ) & \geq&0,\qquad Y
_{T,j}^{+} ( t,\mathbf{y} ) \,dL_{T,j}^{+}
( t,\mathbf{y}%
 ) = 0,\\
  L_{T,j}^{+} ( 0,
\mathbf{y} ) &=&0,\qquad d{L}_{T,j}^{+} ( t,\mathbf{y} )
\geq0.\nonumber
\end{eqnarray}
According to (\ref{SKR_solution}), if $\mathbf{y}=0$,
%
\begin{equation}
\mathbf{Y}_{T}^{+} ( t,0 ) =\max_{0\leq u\leq t}
\bigl(\mathbf{Z}%
_{T}^{\leftarrow} ( t ) -
\mathbf{Z}_{T}^{\leftarrow} ( u ) \bigr). \label{Exp1_Y_pls}
\end{equation}
Since $E[\mathbf{Z}(1)]<0$, the process $\mathbf{Y}^{+}$ satisfying the
Skorokhod problem (\ref{SP_Bnd}) with orthogonal reflection ($R=I$)
possesses a unique stationary distribution. So, we can construct a
stationary version of $ ( \mathbf{Y}^{+} ( s )
\dvtx -\infty<s\leq0 ) $ as
%
\begin{equation}
\mathbf{Y}_*^{+} ( s ) =\lim_{T\rightarrow-\infty}
\mathbf{Y}%
_{T}^{+} ( -T-s,0 ). \label{Ypls_stat}
\end{equation}
The following representation of $\mathbf{Y}_*^{+}(\cdot)$ is known in
the queueing literature; still we include a short proof to make the
presentation self-contained.

\begin{proposition}\label{pr1}
Given any $t\geq0$,
%
\begin{equation}
\mathbf{Y}_*^{+}(-t)=-\mathbf{Z}(t)+\max_{t\leq u<\infty}
\mathbf{Z}(u). \label{Stat_version}
\end{equation}
\end{proposition}

\begin{pf}
Expression (\ref{Exp1_Y_pls}) together with the definition of $\mathbf
{Z}%
_{T}^{\leftarrow} ( \cdot ) $ yields
\begin{eqnarray*}
\mathbf{Y}_{T}^{+} ( -T+s,0 ) & =&\max_{0\leq u\leq-T+s}
\bigl(\mathbf{Z}^{\leftarrow} ( s ) -\mathbf{Z}^{\leftarrow} ( T+u ) \bigr) =
\max_{T\leq r\leq s}\bigl(\mathbf{Z}^{\leftarrow} ( s ) -
\mathbf{Z}^{\leftarrow} ( r ) \bigr)
\\
&=&\max_{T\leq r\leq s}\bigl(-\mathbf{Z} ( -s ) +\mathbf{Z} ( -r )
\bigr) =-\mathbf{Z} ( -s ) +\max_{T\leq r\leq s}\mathbf{Z} ( -r ).
\end{eqnarray*}
Let $-s=t\geq0$ and $-r=u\geq0$, and we obtain
$\mathbf{Y}_{T}^{+} ( -T-t,0 ) =-\mathbf{Z} (
t ) +\max_{t\leq u\leq-T}\mathbf{Z} ( u )$.
Now send $-T\rightarrow\infty$ and arrive at (\ref{Stat_version}), thereby
obtaining the result.
\end{pf}

\subsection{The structure of the main simulation procedure}\label%
{Subsec_Main1}

We now are ready to explain our main algorithm to simulate unbiased samples
from the steady-state distribution of $\mathbf{Y}$. For this purpose,
let us
first define
\[
\mathbf{M} ( t ) =\max_{t\leq u<\infty}\mathbf{Z}(u),
\]
for $t\geq0$ so that $\mathbf{Y}_*^+(-t)=\mathbf{M}(t)-\mathbf{Z}(t)$.
Since $%
E[\mathbf{Z} ( 1 ) ] <0$, it follows that $\mathbf{M}(0)<\infty
$, and hence $ ( \mathbf{M} (
t ) \dvtx t\geq0 ) $ is a stochastic process with finite value. We
assume that we can simulate $\mathbf{M} ( \cdot ) $
jointly with $\mathbf{Z}(\cdot)$ until the coalescence time $\tau$, and
we shall
explain how to perform such simulation procedures in Section~\ref{sec2.3}.

\begin{algorithm}[{[Exact sampling of $\mathbf{Y} ( \infty )$]}]\label{alg1}
\textit{Step} 1: Simulate $(\mathbf{M}(t),\mathbf{Z}(t))$ jointly until time
$\tau\geq0$ such that $\mathbf{Z}(\tau)=\mathbf{M}(\tau)$.

\textit{Step} 2: Set $\mathbf{X}_{-\tau}^{\leftarrow} ( t ) =%
\mathbf{Z}(\tau)-\mathbf{Z} ( \tau-t ) +\mathbf{z}\times t$, and
compute $\mathbf{Y}_{-\tau} ( t,0 ) $ for $0\leq t\leq
\tau$ that solves the Skorokhod problem with input process $\mathbf{X}%
_{-\tau}^{\leftarrow} ( t )$ and initial value $\mathbf{Y}%
_{-\tau}(0,0)=0$. In detail, $\mathbf{Y}_{-\tau} ( t,0%
 ) $ solves
\begin{eqnarray*}
d\mathbf{Y}_{-\tau} ( t,0 ) & =&d\mathbf{X}_{-\tau
}^{\leftarrow}
( t ) +R\,d\mathbf{L}_{-\tau} ( t,0 ),
\\
\mathbf{Y}_{-\tau} ( t,0 ) & \geq& 0,\qquad %
Y_{-\tau,j}
( t,0 ) \,dL_{-\tau,j} ( t,0 ) =0,\\
 L_{-\tau,j} ( 0,0 ) &=&0,\qquad %
dL_{-\tau,j} ( t,0 ) \geq0,
\end{eqnarray*}
for $\tau$ units of time.

\textit{Step} 3: Output $\mathbf{Y}_{-\tau} ( \tau,0 ) $
which has the distribution of $\mathbf{Y} ( \infty ) $.

In step 2, The constant $\mathbf{z}$ is chosen according to Lemma~\ref{LmK_W_Comp} such
that $\mathbf{Z}(t)=\mathbf{X}(t)-\mathbf{z}t$. The time is $-\tau$
precisely the coalescence time as in a DCFTP algorithm. The following
proposition summarizes the validity of this algorithm.
\end{algorithm}

\begin{proposition}\label{pr2}
The previous algorithm terminates with probability one, and its output
is an
unbiased sample from the distribution of $\mathbf{Y} ( \infty
) $.
\end{proposition}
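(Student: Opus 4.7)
The plan is to verify the two claims of the proposition separately: (i) almost-sure termination of the algorithm and (ii) that the distribution of the output coincides with the stationary law of $\mathbf{Y}$. Both arguments rest on Proposition 1 together with Lemma 1 and the standard dominated-coupling-from-the-past template.

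For almost-sure termination, I need to show that a time $\tau\geq 0$ with $\mathbf{M}(\tau)=\mathbf{Z}(\tau)$ exists a.s. By Proposition 1, the event $\{\mathbf{M}(\tau)=\mathbf{Z}(\tau)\}$ coincides with $\{\mathbf{Y}^{+}(-\tau)=\mathbf{0}\}$, so I must argue that the stationary orthogonally-reflected process $\mathbf{Y}^{+}$ visits the origin. Because $\mathbf{z}$ has been chosen so that $E\mathbf{Z}(1)<0$ componentwise, each coordinate $Y_i^{+}$ is a stable one-dimensional reflected compound Poisson process, and between consecutive Poisson epochs every coordinate drains deterministically at rate $z_i>0$. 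Hence any inter-arrival gap exceeding $\max_i Y_i^{+}/z_i$ brings $\mathbf{Y}^{+}$ to $\mathbf{0}$ simultaneously in all coordinates. Since the inter-arrival gaps of the common driving Poisson process are i.i.d.\ exponential, and $\mathbf{Y}^{+}$ is tight at jump epochs (a consequence of stability together with Assumption A3), a Borel--Cantelli argument produces infinitely many such synchronizing gaps, giving $\tau<\infty$ a.s.

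For unbiasedness, I would set up DCFTP along the standard lines. Extend $\mathbf{X}$ (and hence $\mathbf{Z}$) to a two-sided process and let $\mathbf{Y}^{\circ}$ be the stationary solution of the Skorokhod problem (\ref{SP1}) driven by this common input; existence follows from the stability conditions A1-A2 discussed in the Introduction. Applying Lemma 1 to the coupled pair $(\mathbf{Y}^{\circ},\mathbf{Y}^{+})$ -- rigorously, via a limit of the two processes started at $\mathbf{0}$ at a common time $-N\to-\infty$ -- yields $0\leq R^{-1}\mathbf{Y}^{\circ}(s)\leq R^{-1}\mathbf{Y}^{+}(s)$ for every $s$. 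Evaluating at $s=-\tau$ and using $\mathbf{Y}^{+}(-\tau)=\mathbf{0}$ together with invertibility of $R$ (Assumption A1) and $\mathbf{Y}^{\circ}(-\tau)\geq\mathbf{0}$ forces $\mathbf{Y}^{\circ}(-\tau)=\mathbf{0}$. Because the Skorokhod map is a deterministic functional of the initial state and the input increments, $\mathbf{Y}^{\circ}(0)$ equals pathwise the solution of (\ref{SP1}) on $[-\tau,0]$ started from $\mathbf{0}$ and driven by the increments of $\mathbf{X}$ on that interval, which is precisely $\mathbf{Y}_{-\tau}(\tau,\mathbf{0})$ as computed in Step 2 of the algorithm. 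Since $\mathbf{Y}^{\circ}(0)$ has the stationary law of $\mathbf{Y}$, so does the output.

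The most delicate point is the simultaneous coalescence needed for termination: the coordinates of $\mathbf{W}(k)$ are not assumed independent, so a componentwise return argument is not available. The synchronization must instead come from the common Poisson clock, whose long quiet intervals drain every coordinate jointly. A secondary technical point is the limit argument used to transfer Lemma 1 from the finite-horizon, common-initial-condition statement to the two-sided stationary coupling, but this is routine once a stationary $\mathbf{Y}^{\circ}$ has been constructed.
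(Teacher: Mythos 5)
Your proposal is correct in substance but takes a genuinely different route from the paper on both claims. For termination, the paper simply invokes the result of Kella (1996) that the stationary distribution of the orthogonally reflected dominating process $\mathbf{Y}^{+}$ has an atom at the origin, so that zeros of the stationary version occur and $\tau<\infty$ a.s.; you instead argue from first principles that a long exponential inter-arrival gap of the common Poisson clock drains all coordinates to zero simultaneously. That argument is sound in outline, but two details need care: between arrivals the coordinates of $\mathbf{Y}^{+}$ drain at rate $\mu_i=(R\mathbf{r}+\mathbf{z})_i>0$, not at rate $z_i$ (which need not be positive), and since the events ``the next gap exceeds $\max_i Y_i^{+}/\mu_i$'' are dependent, you need the conditional (extended) Borel--Cantelli lemma combined with stationarity/ergodicity of $\mathbf{Y}^{+}$ at jump epochs rather than the independent-events version. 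For unbiasedness, the paper never constructs a two-sided stationary coupled version of $\mathbf{Y}$: it shows, via Lemma \ref{LmK_W_Comp} and Ramasubramanian's monotonicity in the initial condition, that $\mathbf{Y}^{+}(T+t)=\mathbf{0}$ forces $\mathbf{Y}_{T}(t,\mathbf{0})=\mathbf{0}$, hence $\mathbf{Y}_{T}(-T,\mathbf{0})=\mathbf{Y}_{-\tau}(\tau,\mathbf{0})$ for every $T<-\tau$, and then lets $T\to-\infty$ and uses the weak convergence $\mathbf{Y}_{T}(-T,\mathbf{0})\Rightarrow\mathbf{Y}(\infty)$ from Kella (1996). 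You instead build a stationary process $\mathbf{Y}^{\circ}$ driven by the same input (a monotone-limit construction that the paper only carries out later, in Lemma 2 for the RBM case), use the domination to force $\mathbf{Y}^{\circ}(-\tau)=\mathbf{0}$, and identify the algorithm's output pathwise with $\mathbf{Y}^{\circ}(0)$. Both routes rest on the same ingredients (Lemma \ref{LmK_W_Comp}, monotonicity of the Skorokhod map in the initial condition, and convergence to stationarity); the paper's version is lighter because it avoids constructing and verifying stationarity of $\mathbf{Y}^{\circ}$, while yours gives the slightly stronger pathwise statement that the output is the time-$0$ value of an explicitly coupled stationary version, which is precisely the viewpoint the paper adopts in Section 3.
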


\begin{pf}The argument is similar to the classic Lyones construction.
Let us start by first noting that
\[
\mathbf{Y}_+^*(0)=\mathbf{M}(0)=0\vee\bigl(-U_1\bolds{\mu}+
\mathbf {W}(1)+\mathbf{M}'\bigr). %
\]
Here $U_1$ is the arrival time of the first job and follows an
exponential distribution. $\mathbf{M}'=\max_{0\leq t<\infty} \mathbf
{Z}(t+U_1)-\mathbf{Z}(U_1)<\infty$ is equal in distribution to $\mathbf
{M}(0)$. Then $P(\mathbf{Y}_+^*(0)=0)=P(U_1\geq\max_i (W_i(1)+M'_i)/\mu
_i)>0$ since $U_1$ has infinite support and is independent of both
$\mathbf{W}(1)$ and $\mathbf{M}'$. Therefore, $\mathbf{Y%
}^{+} ( \infty ) $ has an atom at zero. 
This implies that $
\tau<\infty$ with probability one. Actually, we will show later that
$E[\exp(\delta\tau)]<\infty$ for some $\delta>0$ in Theorem~\ref{th1}. Let
$T<0$, and note that, thanks to
Lemma~\ref{LmK_W_Comp}, for $t\in(0,\llvert  T\rrvert ]$
%
\begin{equation}
R^{-1}\mathbf{Y}_{T}(t,0)\leq R^{-1}
\mathbf{Y}_{T}^{+}(t,0).\label{Comparison}
\end{equation}
In addition, by monotonicity of the solution to the Skorokhod problem
in terms
of its initial condition [see \citet{KellaWhitt1996}], we also have
[using the definition of $\mathbf{Y}^+_T(t,\mathbf{y})$ from (\ref
{Ypls_def}) and $\mathbf{Y}^{+}_*(T)$ from (\ref{Ypls_stat})] that
%
\begin{equation}
\mathbf{Y}_{T}^{+}(t,0)\leq\mathbf{Y}_{T}^{+}
\bigl(t,\mathbf{Y}%
^{+}_*(T)\bigr)=\mathbf{Y}_*^{+}(T+t).\label{Comparison_2}
\end{equation}
So $\mathbf{Y}_*^{+}(T+t)=0$ implies $\mathbf{Y}_{T}^{+}(t,0)=0$. One
step further, as $R^{-1}$ has nonnegative coordinates, equations (\ref
{Comparison}) and (\ref{Comparison_2}) imply that $\mathbf{Y}_{T}(t,0)=0$.
Consequently, if $-T>\tau\geq0$,
\[
\mathbf{Y}_{T}\bigl(\llvert T\rrvert -\tau,0\bigr)=0,
\]
which in particular yields that $\mathbf{Y}_{T}(-T,0)=\mathbf{Y}%
_{-\tau}(\tau,0)$. We then obtain that
\[
\lim_{T\to-\infty}\mathbf{Y}_{T}(-T,0)=\mathbf{Y}%
_{-\tau}(
\tau,0),
\]
thereby concluding that $\mathbf{Y}_{\tau}(-\tau,0)$ follows the distribution
$\mathbf{Y} ( \infty ) $ as claimed.
\end{pf}

Step 2 in Algorithm \ref{alg1.1} is straightforward to implement because the
process $%
\mathbf{X}_{-\tau}^{\leftarrow} ( \cdot ) $ is piecewise linear,
and the solution to the Skorokhod problem, namely $\mathbf{Y}_{-\tau
} (
\cdot,0 )$, is also piecewise linear. The gradients are simply
obtained by solving a sequence of linear system of equations which are
dictated by evolving the ordinary differential equations given in (\ref
{S1b}%
). Therefore, the most interesting part is the simulation of the stochastic
object $ ( \mathbf{M} ( t ) \dvtx 0\leq t\leq\tau ) $ in
step 1,
as we will discuss in Section~\ref{sec2.3}.

\subsection{Simulation of the stationary dominating process}\label{sec2.3}

As customary, we use the notation $E_{0} ( \cdot ) $ or $%
P_{0} ( \cdot ) $ to indicate the conditioning $\mathbf{Z} (
0 ) =0$. We define $\phi_i(\theta)=E_0[\exp(\theta Z_i(1))]$ to be the
moment-generating function of $Z_i(1)$, and let $\psi_i(\theta)=\log
(\phi_i(%
\theta))$. In order to simplify the explanation of the simulation procedure
to sample $ ( \mathbf{M} ( t ) \dvtx t\geq0 ) $, we introduce
the following assumption:

\emph{Assumption}: (A3b) Suppose that in every dimension $i$ there
exists $\theta_{i}^{\ast}\in ( 0,\infty ) $ such that
\[
\psi_{i} \bigl( \theta^*_{i} \bigr) =\log E_{0}
\exp \bigl( \theta _{i}^*Z_{i} ( 1 ) \bigr) = 0.
\]

This assumption is a strengthening of assumption (A3), and it is known as
Cramer's condition in the large deviations literature. As we shall explain
at the end of Section~\ref{sec2.3}, it is possible to dispense this assumption and
only work under assumption (A3). For the moment, we continue under assumption
(A3b).

We wish to simulate $ ( \mathbf{Z} ( t ) \dvtx 0\leq t\leq
\tau ) $ where $\tau$ is a time such that
\[
\mathbf{Z}(\tau)=\mathbf{M}(\tau)=\max_{s\geq\tau}\mathbf{Z}(s)\quad
\mbox{and hence}\quad
\forall0\leq t\leq\tau, \qquad\mathbf{M}(t)=\max_{t\leq s\leq\tau
}
\mathbf{Z%
}(s).
\]
Recall that $-\tau$ is precisely the coalescence time since $\mathbf{Y}%
^+_*(-\tau)=0$. We also keep in mind that our formulation at the
beginning of the \hyperref[sec1]{Introduction} implies that
\[
\mathbf{Z}(t)=\mathbf{J} ( t ) -R\mathbf{r}t-\mathbf{z}%
t=\sum
_{k=1}^{N ( t ) }\mathbf{W} ( k ) -R\mathbf{r}t-
\mathbf{z}t,
\]
where $\mathbf{z}$ is selected according to Lemma~\ref{LmK_W_Comp}. Define
\[
\bolds{\mu}=R\mathbf{r}+\mathbf{z},
\]
and let $\mu_{i}>0$ be the $i$th coordinate of $\bolds{\mu}$. In addition,
we assume that we can choose a constant $m>0$ large enough such that
%
\begin{equation}
\sum_{i=1}^{d}\exp \bigl( -
\theta^*_{i}m \bigr) <1. \label{L1}
\end{equation}

Define
%
\begin{equation}
T_{m}=\inf\bigl\{t\geq0\dvtx Z_{i}(t)\geq m,\mbox{for
some } i\bigr\}. \label{T_m}
\end{equation}
Now we are ready to propose the following procedure to simulate $\tau$:

\renewcommand{\thealgorithm}{1.1}
\begin{algorithm}[(Simulating the coalescence time)]\label{alg1.1}
The output of this algorithm is $ ( \mathbf{Z} ( t ) \dvtx 0\leq
t\leq\tau ) $, and the coalescence time $\tau$. Choose the
constance $m$
according to (\ref{L1}):
\begin{longlist}[(1)]
\item[(1)] Set $\tau=0$, $\mathbf{Z} ( 0 ) =0$.

\item[(2)] Generate an inter-arrival time $U$ distributed Exp$(\lambda
)$, and
sample $\mathbf{W}= ( W_{1},\ldots,W_{d} ) $ independent of $U$.

\item[(3)] Let $\mathbf{Z} ( \tau+t ) =\mathbf{Z} ( \tau
 ) -t%
\bolds{\mu}$ for $0\leq t<U$ and $\mathbf{Z} ( \tau+U )
=\mathbf{Z}%
 ( \tau ) +\mathbf{W}-U\bolds{\mu}$.

\item[(4)] If there exists an index $i$, such that $W_{i}-U\mu_{i}\geq-m$,
then return to step~2 and reset $\tau\longleftarrow\tau+U$. Otherwise,
sample a Bernoulli $I $ with parameter $p=P_{0}(T_{m}<\infty)$.

\item[(5)] If $I=1$, simulate a new \textit{conditional path} $ (
\mathbf{C}%
 ( t ) \dvtx 0\leq t\leq T_{m} ) $ following the conditional
distribution of $ ( \mathbf{Z} ( t ) \dvtx 0\leq t\leq
T_{m} ) $
given that $T_{m}<\infty$ and $\mathbf{Z} ( 0 ) =0$. Let $%
\mathbf{Z} ( \tau+t ) =\mathbf{Z} ( \tau ) +\mathbf{C}
 ( t ) $ for $0\leq t\leq T_{m}$, and reset $\tau\longleftarrow
\tau+$
$T_{m}$. Return to step~2.

\item[(6)] Else, if $I=0$, stop and return $\tau$ along with the feed-in
path $%
 ( \mathbf{Z}(t)\dvtx 0\leq t\leq\tau ) $.
\end{longlist}
\end{algorithm}

We shall now explain how to execute the key steps in the previous algorithm,
namely, steps 4 and 5.

\subsubsection{Simulating a path conditional on reaching a positive
level in
finite time}\label{sec2.3.1}

The procedure that we shall explain now is an extension of the
one-dimensional procedure given in \citet{BlanchetSigman2011}; see also the
related one-dimensional procedure by \citet{EnsorGlynn2000}. The strategy
is to use acceptance/rejection. The proposed distribution is based on
importance sampling by means of exponential tilting. In order to describe
our strategy, we need to introduce some notation.

We think of the probability measure $P_{0} ( \cdot ) $ as defined
on the canonical space of right-continuous with left-limits $\mathbb
{R}^{d}$-valued functions, namely, the ambient space of ($\mathbf{Z} (
t )
\dvtx t\geq0)$ which we denote by $\Omega=D_{[0,\infty)} ( \mathbb{R}%
^{d} ) $. We endow the probability space with the Borel $\sigma$-field
generated by the Skorokhod $J_{1}$ topology; see \citet{Billingsley1999}.
Our goal is to simulate from the conditional law of $(\mathbf
{Z}(t)\dvtx 0\leq
t\leq T_m)$ given that $T_m<\infty$ and $\mathbf{Z}(0)=0$, which we
shall denote by $P^*_0$ in the rest of this part.

Now let us introduce our proposed distribution, $P_{0}^{\prime} (
\cdot ) $, defined on the space $\Omega^{\prime}=D_{[0,\infty
)} (
\mathbb{R}^{d} ) \times\{1,2,\ldots,d\}$. We endow the probability space
with the product $\sigma$-field induced by the Borel $\sigma$-field
generated by the Skorokhod $J_{1}$ topology and all the subsets of $%
\{1,2,\ldots,d\}$. So, a typical element $\omega^{\prime}$ sampled
under $%
P_{0}^{\prime} ( \cdot ) $ is of the form $\omega^{\prime
}=((%
\mathbf{Z} ( t ) \dvtx t\geq0),\operatorname{Index})$, where $\operatorname{Index}\in\{
1,2,\ldots,d\}$.
The distribution of $\omega^{\prime}$ induced by $P_{0}^{\prime} (
\cdot ) $ is described as follows. First, set
%
\begin{equation}
P_{0}^{\prime} ( \operatorname{Index}=i ) =w_{i}:=
\frac{\exp (
-\theta_{i}^{\ast}m ) }{\sum_{j=1}^{d}\exp (
-\theta_{j}^{\ast}m ) }. \label{DisK}
\end{equation}
Now, given $\operatorname{Index}=i$, for every set $A\in\sigma(\mathbf{Z} (
s )
\dvtx 0\leq s\leq t)$,
\[
P_{0}^{\prime} ( A|\operatorname{Index}=i ) =E_{0}\bigl[ \exp
\bigl( \theta_{i}^{\ast}Z_{i} ( t ) \bigr)
I_{A}\bigr].
\]
So, in particular, the Radon--Nikodym derivative (i.e., the likelihood ratio)
between the distribution of $\omega=(\mathbf{Z} ( s ) \dvtx 0\leq
s\leq
t)$ under $P_{0}^{\prime} ( \cdot ) $ and $P_{0} ( \cdot
 )
$ is given by
\[
\frac{dP_{0}^{\prime}}{dP_{0}} ( \omega ) =\sum_{i=1}^{d}w_{i}
\exp \bigl( \theta_{i}^{\ast}Z_{i} ( t ) \bigr) .
\]

\textit{The distribution of $(\mathbf{Z} ( s ) \dvtx s\geq0)$ under
$%
P_{0}^{\prime} ( \cdot ) $ is precisely the proposed distribution
that we shall use to apply acceptance/rejection.} It is straightforward to
simulate under $P_{0}^{\prime} ( \cdot ) $. First, sample $\operatorname{Index}$
according to the distribution (\ref{DisK}). Then, conditional on $\operatorname{Index}=i$,
the process $\mathbf{Z} ( \cdot ) $ also follows a compound Poisson
process. Given $\operatorname{Index}=i$, under $P_{0}^{\prime} ( \cdot ) $,
it follows that $\mathbf{J} ( t ) $ can be represented as
%
\begin{equation}
\mathbf{J} ( t ) =\sum_{k=1}^{\hat{N} ( t ) }\mathbf
{W}%
^{\prime} ( k ), \label{J_prime}
\end{equation}
where $\hat{N} ( \cdot ) $ is a Poisson process with rate $%
\lambda E[\exp(\theta^*_iW_i)]$. In addition, the distribution of
$\mathbf{W}^{\prime
}$ is obtained by exponential titling such that for all $A\in\sigma
(\mathbf{W%
})$,
%
\begin{equation}
P^{\prime}\bigl(\mathbf{W}^{\prime}\in A\bigr)=E\bigl[\exp\bigl(
\theta_i^*W_i\bigr)I_A\bigr]. \label{J_P2}
\end{equation}
In sum, conditional on $\operatorname{Index}=i$, we simply let
%
\begin{equation}
\mathbf{Z} ( t ) =\sum_{k=1}^{\hat{N} ( t ) }\mathbf
{W}%
^{\prime} ( k ) -\bolds{\mu}t. \label{J_P3}
\end{equation}

Now, note that we can write
\begin{eqnarray*}
E_{0}^{\prime} \bigl( Z_{\operatorname{Index}} ( t ) \bigr) & =&\sum
_{i=1}^{d}E_{0}
\bigl(Z_{i}(t)\exp \bigl( \theta_{i}^{\ast}Z_{i}
( t ) \bigr) \bigr)P^{\prime} ( \operatorname{Index}=i )
\\
& =&\sum_{i=1}^{d}\frac{d\phi_{i} ( \theta_{i}^{\ast} )
}{d\theta}%
w_{i}>0,
\end{eqnarray*}
where the last inequality follows by convexity of $\psi_{k} (
\cdot ) $ and by definition of $\theta_{k}^{\ast}$. So, we have
that $%
Z_{\operatorname{Index}} ( t ) \nearrow\infty$ as $t\nearrow\infty$ with
probability one under $P_{0}^{\prime} ( \cdot ) $ by the law of
large numbers. Consequently $T_{m}<\infty$ a.s. under $P_{0}^{\prime
} (
\cdot ) $.

Recall that $P_{0}^{\ast} ( \cdot ) $ is the conditional law
of $%
 ( \mathbf{Z} ( t ) \dvtx 0\leq t\leq T_{m} ) $ given that
$%
T_{m}<\infty$ and $\mathbf{Z} ( 0 ) =0$. In order to
assure that we can indeed apply acceptance/rejection theory to simulate
from $P^*_0(\cdot)$, we need to show that the likelihood ratio $%
dP_{0}/dP_{0}^{\prime}$ is bounded:
%
\begin{eqnarray}
\label{ARB} && \frac{dP_{0}^{\ast}}{dP_{0}^{\prime}} \bigl( \mathbf{Z} ( t ) \dvtx 0\leq t\leq
T_{m} \bigr)
\nonumber
\\
&&\qquad =\frac{1}{P_{0} ( T_{m}<\infty ) }\times\frac{dP_{0}}{%
dP_{0}^{\prime}} \bigl( \mathbf{Z} ( t ) \dvtx 0\leq
t\leq T_{m} \bigr)
\\
&&\qquad =\frac{1}{P_{0} ( T_{m}<\infty ) }\times\frac{1}{%
\sum_{i=1}^{d}w_{i}\exp ( \theta_{i}^{\ast}Z_{i} ( T_{m} )
 ) }.\nonumber
\end{eqnarray}
Upon $T_m$, there is an index $L$ ($L$ may be different from $\operatorname{Index}$) such
that $\exp ( \theta_{L}^{\ast}Z_{L} ( T_{m} )  ) \geq
\exp ( \theta_{L}^{\ast}m ) $, therefore
%
\begin{equation}
\frac{1}{\sum_{i=1}^{d}w_{i}\exp ( \theta_{i}^{\ast}Z_{i} (
T_{m} )  ) } \leq\frac{1 }{w_{L}\exp (
\theta_{L}^{\ast}m )}=\sum_{i=1}^{d}
\exp \bigl( -\theta _{i}^{\ast}m \bigr) < 1, \label{B_1}
\end{equation}
where the last inequality follows by (\ref{L1}). Consequently, plugging
(\ref%
{B_1}) into (\ref{ARB}) we obtain that
%
\begin{equation}
\frac{dP_{0}^{\ast}}{dP_{0}^{\prime}} \bigl( \mathbf{Z} ( t ) \dvtx 0\leq t\leq T_{m}
\bigr) \leq\frac{1}{P_{0} ( T_{m}<\infty ) }. \label{ARB_1}
\end{equation}
We now are ready to summarize our acceptance/rejection procedure and the
proof of its validity.

\renewcommand{\thealgorithm}{1.1.1}
\begin{algorithm}[(Simulation of paths conditional on
$T_{m}<\infty$)]\label{alg1.1.1}

\textit{Step} 1: Sample $ ( \mathbf{Z} ( t ) \dvtx 0\leq t\leq
T_{m} ) $ according to $P_{0}^{\prime} ( \cdot ) $ as
indicated via equations~(\ref{DisK}), (\ref{J_prime}) and (\ref{J_P3}).

\textit{Step} 2: Given $ ( \mathbf{Z} ( t ) \dvtx 0\leq t\leq
T_{m} ) $, simulate a Bernoulli $I$ with probability
\[
\frac{1}{\sum_{i=1}^{d}w_{i}\exp ( \theta_{i}^{\ast}Z_{i} (
T_{m} )  ) }.
\]
[Note that the previous quantity is less than unity due to (\ref{B_1}%
).]

\textit{Step} 3: If $I=1$, output $ ( \mathbf{Z} ( t )
\dvtx 0\leq
t\leq T_{m} ) $ and Stop, otherwise go to step 1.
\end{algorithm}

\begin{proposition}\label{pr3}
The probability that $I=1$ at any given call of step 3 in Algorithm
\ref{alg1.1.1} is
$P_0(T_m<\infty)$. Moreover, the output of Algorithm \ref{alg1.1.1} follows the
distribution $P^*_0$.
\end{proposition}

\begin{pf}
The result follows directly from the theory of acceptance/rejection; see
\citet{AsmussenGlynn2007}, pages 39--42. According to it, since the two
probability measures $P_0^*$ and $P'_0$ satisfy
\[
\frac{dP^*_0}{dP'_0}\leq c = \frac{1}{P_0(T_m<\infty)}, %
\]
as indicated by (\ref{ARB}) and (\ref{ARB_1}), one can sample exactly
from $P_0^*$ by the so-called acceptance/rejection procedure:
\begin{longlist}[(1)]
\item[(1)] Generate i.i.d. samples $\{\omega_i\}$ from $P'_0$ and
i.i.d. random numbers $U_i\sim U[0,1]$ independent of $\{\omega_i\}$.
\item[(2)] Define $N=\inf\{n\geq1\dvtx U_n\leq c^{-1}\frac{dP^*_0}{
dP'_0}(\omega_i)\}$.
\item[(3)] Output $\omega_N$.
\end{longlist}
The output $w_N$ follows exactly the law $P^*_0$, and $N$ is a
geometric random variable with mean $c$; in other words, the
probability of accepting a proposal is $c$. In our specific case, we
have $c=1/P_0(T_m<\infty)$, and according to (\ref{ARB}) the likelihood
ration divided by constant $c$ is
\[
c^{-1}\frac{dP^*_0}{ dP'_0}(\omega)=\frac{1}{\sum_{i=1}^{d}w_{i}\exp
 ( \theta_{i}^{\ast}Z_{i} (
T_{m} )  )}. %
\]
Therefore, Algorithm \ref{alg1.1.1} has acceptance probability
$P(I=1)=P_0(T_m<\infty)$, and it generates a path exactly from $P^*_0$
upon acceptance.
\end{pf}

As the previous result shows, the output of the previous procedure
follows exactly the distribution of $ ( \mathbf{Z} ( t )
\dvtx 0\leq t\leq
T_{m} ) $ given that $T_{m}<\infty$ and $\mathbf{Z} ( 0 )
=%
0$. Moreover, the Bernoulli random variable $I$ has probability $%
P_{0} ( T_{m}<\infty ) $ of success. So this procedure actually
allows both steps 4 and 5 in Algorithm \ref{alg1.1} to be executed
simultaneously. In
detail, one simulates a path following the law of $P^{\prime}_0$ until
$T_m$, and then, if the proposed path is accepted, it can be concluded that
$T_m$ is
finite and the proposed path is exactly a sample path following the law
of $%
P^*_0$; otherwise one can conclude that $T=\infty$.

\begin{remark*}
As mentioned earlier, assumption (A3b) is a
strengthening of
assumption (A3). We can carry out our ideas under assumption (A3) as follows.
First, instead of $(\mathbf{M} ( t ) \dvtx t\geq0)$, we consider
the following
process $\mathbf{Z}_{\mathbf{a}}(\cdot)$ and $\mathbf{M}_{\mathbf
{a}} (
\cdot ) $ defined by
\[
\mathbf{Z}_{\mathbf{a}} ( t ):=\mathbf{Z} ( t ) +\mathbf{a%
}t,\qquad
\mathbf{M}_{\mathbf{a}} ( t ) =\max_{s\geq t} \bigl(
\mathbf{Z}%
_{\mathbf{a}} ( s ) \bigr).
\]
We shall explain how to choose the nonnegative vector $%
\mathbf{a}= ( a_{1},a_{2},\ldots,a_{d} ) ^{T}$ in a moment.
Note that we can simulate $ ( \mathbf{M} ( t ) \dvtx t\geq
0 ) $
jointly with $ ( \mathbf{Z} ( t ) \dvtx t\geq0 ) $ if we are
able to simulate $ ( \mathbf{M}_{\mathbf{a}} ( t )
\dvtx t\geq0 ) $ jointly with $(\mathbf{Z}_{\mathbf{a}} ( t )
\dvtx t\geq0)$. Now note that $\psi_{i} ( \cdot ) $ is strictly convex
and that $\dot{\psi}_{i} ( 0 ) <0$, so there exists $a_{i}>0$
large enough to force the existence of $\theta_{i}^{\ast}>0$ such that $
E\exp ( \theta_{i}^{\ast}Z_{i} ( 1 )
+a_{i}\theta_{i}^{\ast} ) =1$, but at the same time small enough to
keep $E ( Z_{i} ( 1 ) +a_{i} ) <0$; again, this
follows by
strict convexity of $\psi_{i} ( \cdot ) $ at the origin. So, if
assumption (A3b) does not hold, but assumption (A3) holds, one can then
execute Algorithm \ref{alg1.1} based on the process $\mathbf{Z_a}(\cdot)$.
\end{remark*}

\subsection{Computational complexity}\label{sec2.4}

In this section we provide a complexity analysis of our algorithm. We first
make some direct observations assuming the dimension of the network remains
fixed. In particular, we note that the expected number of random variables
simulated has a finite moment-generating function in a neighborhood of the
origin.

\begin{theorem}\label{th1}
Suppose that \textup{(A1)} to \textup{(A3)} are in force. Let $\tau$ be the coalescence
time, and
$N$ be the number of random variables generated to terminate the overall
procedure to sample $\mathbf{Y} ( \infty ) $. Then there
exists $%
\delta>0$ such that
\[
E\exp ( \delta\tau+\delta N ) <\infty.
\]
\end{theorem}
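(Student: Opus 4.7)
The plan is to prove finite exponential moments of $\tau$ and of $N$ separately, and then verify that the joint moment $E\exp(\delta\tau+\delta N)$ is finite for sufficiently small $\delta>0$. Under A3), by the Remark at the end of Section~2.3 we may pass to the shifted process $\mathbf{Z}_{\mathbf{a}}$ and thereby assume A3b). Two sources of light tails drive the argument: Cram\'er's condition A3b), which yields exponential tails for the stationary dominating workload $\mathbf{Y}^{+}(\infty)$ and for its first-passage times to~$0$; and the tilted law $P_0'$, under which the threshold time $T_m$ has strictly positive drift and finite exponential moments.

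\textbf{Exponential moment of $\tau$.} By the stationary representation (\ref{Stat_version}), $\tau$ is the smallest $t\ge 0$ at which the time-reversed stationary dominating process returns to the origin, which is the same in law as a first-passage time to $0$ of $\mathbf{Y}^{+}$ started from stationarity. Since $\mathbf{Z}$ is a compound Poisson process with strictly negative drift and a finite Cram\'er root in every coordinate, classical Cram\'er--Lundberg theory (applied componentwise to the orthogonally reflected process $\mathbf{Y}^+$) yields an exponentially decaying tail for $\mathbf{Y}^{+}(\infty)$ and hence for the return-to-origin time, so there exists $\eta>0$ with $E\exp(\eta\tau)<\infty$.

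\textbf{Controlling $N$ given $\tau$.} I would decompose $N=N_{\mathrm{reg}}+N_{\mathrm{cnd}}+N_{\mathrm{fwd}}$, corresponding respectively to variates consumed in the regular iterations of Algorithm~1.1, inside the calls to Algorithm~1.1.1, and during the forward Skorokhod phase (Step~2 of Algorithm~1). Each regular iteration of Algorithm~1.1 costs $O(1)$ variates and advances physical time by an $\mathrm{Exp}(\lambda)$ amount, so $N_{\mathrm{reg}}$ is dominated by a constant times a Poisson$(\lambda\tau)$ count; the same bound applies to $N_{\mathrm{fwd}}$, since the forward Skorokhod solution is piecewise linear between arrivals of the Poisson input on $[0,\tau]$. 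Each call to Algorithm~1.1.1 runs acceptance/rejection with success probability $1-P_0(T_m<\infty)\in(0,1)$, producing a geometric number of proposals; each proposal is a compound-Poisson path over $[0,T_m]$ sampled under $P_0'$. Because $\psi_k(\theta^{\ast}_k)=1$ and $\psi_k$ is strictly convex, $d\psi_k(\theta^{\ast}_k)/d\theta>0$; consequently $Z_k$ has positive drift under $P_0'$ and $T_m$ has finite MGF near~$0$, so the total number of variates generated in a single A/R round has finite MGF in a neighborhood of the origin.

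\textbf{Joint moment and main obstacle.} Combining the above, conditional on $\tau$ the variable $N$ is stochastically dominated by a compound sum of Poisson$(C_1\tau)$ many light-tailed terms; standard compound-Poisson MGF algebra yields $E[\exp(\delta N)\mid\tau]\le\exp(\kappa(\delta)\tau)$ with $\kappa(\delta)\to 0$ as $\delta\to 0$. Using the bound on $\tau$,
\[
E\exp(\delta\tau+\delta N)\;\le\;E\exp\bigl((\delta+\kappa(\delta))\tau\bigr)\;<\;\infty
\]
whenever $\delta$ is small enough that $\delta+\kappa(\delta)<\eta$. The delicate part is the bookkeeping inside $N_{\mathrm{cnd}}$: one must verify that the geometric number of rejected proposals, the compound-Poisson length of each proposal, and the (Poisson$(\lambda\tau)$-dominated) number of calls to Algorithm~1.1.1 combine into a random variable with MGF finite in a neighborhood of zero; the required uniformity is supplied by the positive lower bound on $1-P_0(T_m<\infty)$ together with the finite MGF of $T_m$ under $P_0'$.
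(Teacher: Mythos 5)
Your proposal assembles the right ingredients (positive drift of $Z_K$ under $P_{0}^{\prime}$, hence a light-tailed $T_m$; geometric rejection counts; a success probability bounded away from zero), but two of its load-bearing steps are not justified as stated. First, your exponential bound on $\tau$ addresses the wrong random variable. The $\tau$ in the theorem is the coalescence time returned by Algorithm 1.1, i.e.\ the time at which the procedure \emph{certifies} $\mathbf{M}(\tau)=\mathbf{Z}(\tau)$ through its detection mechanism (an arrival at which every component drops by more than $m$, followed by a verified event $\{T_m=\infty\}$). This is in general strictly later than the first time the stationary dominating process $\mathbf{Y}^{+}$ returns to the origin, and its law is determined by the detection procedure (including the Bernoulli/acceptance steps), not by first-passage theory for $\mathbf{Y}^{+}$ started from stationarity. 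So the Cram\'er--Lundberg argument bounds an idealized return time, not the algorithmic $\tau$ appearing in the statement.

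Second, the key inequality $E[\exp(\delta N)\mid\tau]\leq\exp(\kappa(\delta)\tau)$ is asserted rather than proved, and it is precisely the delicate point: $\tau$ and the variate counts are constructed from the same randomness, rejected proposals in Algorithm 1.1.1 consume random variables without advancing physical time, and accepted conditional paths follow $P_{0}$ conditioned on $\{T_m<\infty\}$ rather than rate-$\lambda$ Poisson input, so a ``Poisson$(\lambda\tau)$ domination given $\tau$'' does not follow from time accounting. The paper's (terse) proof sidesteps both issues by using the compound-geometric structure of the algorithm directly: each cycle of Algorithm 1.1 ends in a trial whose termination probability is bounded below (via Condition 1, the acceptance probability and $P_{0}(T_m=\infty)$ are bounded away from zero), so the number of cycles is geometric, while within a cycle both the elapsed time and the number of variates generated have finite moment generating functions --- the only nontrivial ingredient being $E_{0}^{\prime}[\exp(\delta T_m)]<\infty$, which follows from classical results for random walks with positive drift and light-tailed increments, see \citet{Gut_2009}. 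A geometric compound of i.i.d.\ light-tailed nonnegative terms is light-tailed, which gives the joint bound on $(\tau,N)$ in one stroke, with no conditioning on $\tau$ and no passage-time analysis of $\mathbf{Y}^{+}$. If you reorganize your bookkeeping per cycle rather than conditionally on $\tau$, your ingredients are exactly the ones needed to complete the argument.
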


\begin{pf}
This follows directly from classical results about random walks; see
\citet{Gut2009}. In particular it follows that $E_{0}^{\prime}(\exp (
\delta T_{m} ) )<\infty$. The rest of the proof follows from elementary
properties of compound geometric random variables arising from the
acceptance/rejection procedure.
\end{pf}

We are more interested, however, in complexity properties as the network
increases. We shall impose some regularity conditions that allow us to
consider a sequence of systems indexed by the number of dimensions $d$. We
shall grow the size of the network in a meaningful way; in particular, we
need to make sure that the network remains stable as the dimension $d$
increases. Additional regularity will also be imposed.

\emph{Assumptions}:

There exists two constants $0<\delta<1<H<\infty$ independent of $d$
satisfying the following conditions:

(C1) $R^{-1}E[\mathbf{X}(1)]<-2\delta R^{-1}\mathbf{1}$ in each network.

(C2) Let $\theta^*_i$ for $i=1,\ldots,d$ be the tilting parameters as
defined in
assumption~(A3b), then
\[
E\exp \bigl[ \bigl(\delta+\theta_i^*\bigr) W_{i} \bigr]
\leq H <\infty
\]
and
\[
H> \delta+\theta^{*}_{i}\qquad\mbox{for all }1\leq i\leq d.
\]

(C3) The arrival rate $\lambda\in(\delta, H)$.

\begin{remark*}
Assumption (C1) implies that $\bolds{\mu}=R\mathbf
{r}+\mathbf{%
z}>\delta\mathbf{1}$, where $\mathbf{z}$ is defined according to Lemma~\ref{LmK_W_Comp}. In
detail, we choose $\mathbf{z}=E[\mathbf{X}(1)]+\delta\mathbf{1}$ and
therefore, $R\mathbf{r}+\mathbf{z}=E[\mathbf{J}(1)]+\delta\mathbf
{1}>\delta%
\mathbf{1}$.

Note that $x\leq\exp(ax)/(ae)$ for any $a>0$ and $x\geq0$. Plugging in
$a=\theta^*_i+\delta$, we have $%
E[W_i]\leq E[\exp((\theta^*_i+\delta)W_i)]/(e(\delta+\theta
^*_i))<H/(e\delta)$ and therefore
\[
\bolds{\mu}=\lambda E[\mathbf{W}]+\delta\mathbf{1}<\bigl(H^2/(e
\delta )+\delta\bigr)\mathbf{1}=H'\mathbf{1},
\]
where $H'=H^2/(e\delta)+\delta$.
Similarly, we also have that $E[W_i^2]\leq E[4\exp((\theta^*_i+\delta
)W_i)]/(e^2(\theta^*_i+\delta)^2)\leq4H/(e^2\delta^2)$, and then we
can compute
\begin{eqnarray*}
E\bigl[Z_i(1)^2\bigr]&=&E\Biggl[\Biggl(\sum
_{k=1}^{N(1)}W_i(k)-\mu_i
\Biggr)^2\Biggr]\leq2E\Biggl[\mu_i^2+\Biggl(
\sum_{k=1}^{N(1)}W_i(k)
\Biggr)^2\Biggr]
\\
&\leq&2\mu_i^2+2\bigl(\lambda+\lambda^2\bigr)
\frac{4H}{e^2\delta^2}\leq 2{H'}^2+\frac{8(H^2+H^3)}{e^2\delta^2}:=H''.
\end{eqnarray*}
In sum, we can conclude that
\[
\max_{1\leq i\leq d}E_0\bigl[Z_{i}(1)^2
\bigr]\leq H''.
\]
In the complexity analysis, we shall only use the fact that $H$, $H'$
and $H''$ are constants independent of $d$. As a result, for the
simplicity of notation, we shall write $H$ for $H$, $H'$ and $H''$
in the rest of this section and assume, without loss of generality, that
\[
\bolds{\mu}\leq H\mathbf{1}\quad\mbox{and}\quad\max_{1\leq i\leq
d}E_0
\bigl[Z_{i}(1)^2\bigr]\leq H. %
\]
\end{remark*}

As discussed in Section~\ref{sec2.3.1}, in Algorithm \ref{alg1.1}, we actually do steps~4 and~5 simultaneously. Therefore, we can rewrite Algorithm \ref{alg1.1} as follows:

\renewcommand{\thealgorithm}{1.1$'$}
\begin{algorithm}[(Simulate the coalescence time)]\label{alg1.1'}
%
\begin{longlist}[(1)]
\item[(1)] Set $\tau=0$, $\mathbf{Z}(0)=0$, $N=0$.

\item[(2)] Simulate a sample from $\mathbf{W}-U\bolds{\mu}$. Here
$U$ is
exponentially distributed with mean $1/\lambda$ and independent of
$\mathbf{W%
}$. Record the value of $\mathbf{Z}(t)$ for $\tau\leq t\leq\tau+U$.
Reset $%
N\leftarrow N+1$, $\mathbf{Z}(\tau+U)\leftarrow\mathbf{Z}(\tau)+\mathbf
{W}-U%
\bolds{\mu}$, $\tau\leftarrow\tau+U$.

\item[(3)] If there exists some index $i$, such that $W_{i}-Ur_{i}\geq-m$,
return to step 2.

\item[(4)] Otherwise, simulate a random walk $\{\mathbf{C}(n)\}$ such
that $%
\mathbf{C}(0)=0$ and $\mathbf{C}(n)=\mathbf{C}(n-1)+\mathbf{W}^{\prime
}(n)-U^{\prime}(n)\bolds{\mu}$, where $\mathbf{W}^{\prime
}(n)-U^{\prime
}(n) \bolds{\mu}$ are independent and identically distributed as
$\mathbf{W}%
^{\prime}-U^{\prime}\bolds{\mu}$ under the tilted measure
$P^{\prime}$
defined in Section~\ref{sec2.3.1} through (\ref{J_prime}) to (\ref{J_P3}). Perform
the simulation until $N_{m}=\inf\{n\geq0\dvtx C_{i}(n)>m\mbox{ for some }i\}$.

\item[(5)] Reset $N\leftarrow N+N_{m}$. Compute $p=1/\sum_{k=1}^{d}
w_{k}\exp
(\theta^{*}_{k} C_{k}(N_{m}))$, and sample a Bernoulli $I$ with
probability $%
p $. If $I=1$, $\mathbf{Z}(\tau+\sum_{k=1}^{N_{m}}U^{\prime
}(k))=\mathbf{Z}%
(\tau)+\mathbf{C}(N_{m})$ and $\tau=\tau+\sum_{k=1}^{N_{m}}U^{\prime}(k)$.
Return to step 2.

\item[(6)] If $I=0$, stop and output $\tau$ with $ ( \mathbf
{Z}(t)\dvtx 0\leq
t\leq\tau ) $.
\end{longlist}
\end{algorithm}

In this algorithm, the total number of random variables required to generate
is $d\cdot N$. Use $N(d)$ instead of $N$ to emphasize the dependence on the
number of dimensions $d$. The following result shows that our algorithm has
polynomial complexity with respect to $d$:

\begin{theorem}\label{th2}
Under assumptions \textup{(C1)} to \textup{(C3)},
\[
E\bigl[N(d)\bigr]=O\bigl(d^{\gamma}\bigr)\qquad\mbox{as }d\to\infty,
\]
for some $\gamma$ depending on $\delta$ and $H$.
\end{theorem}

Denote the number of Bernoulli's generated in step 5 by $N_{b}$ and the
number of random variables generated before executing step 4 in a single
iteration by $N_{a}$. By Wald's identity, we can conclude
\[
E\bigl[N(d)\bigr]=E[N_{b}]\bigl(E[N_{a}]+E[N_{m}]
\bigr).
\]

The following proposition gives an estimate for $E[N_{m}]$.

\begin{proposition}\label{pr4}
Under assumptions \textup{(C1)} to \textup{(C3)},
\[
E[N_{m}]=O(\log d),
\]
and the coefficient in the bound depends only on $\delta$ and $H$.
\end{proposition}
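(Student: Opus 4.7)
The plan is to bound the number of steps of the tilted random walk $\mathbf{C}$ via an optional-stopping argument on the distinguished coordinate $K$, and then account for the $O(d)$ random variables generated at each step.

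First, I would pin down the threshold $m$. Using the Cram\'er condition $\psi_i(\theta_i^*)=0$ together with $\psi_i(0)=0$, $\psi_i'(0)=EZ_i(1)\leq -c_0<0$ (with $c_0$ derived from C1 and independent of $d$), and a uniform upper bound $\psi_i''(\xi)\leq M$ on $[0,\theta_i^*+\alpha/2]$ (derived from $EZ_i^2(1)\leq a$ in C2 plus the $\alpha$-margin $\theta_i^*+\alpha<\delta_i$ and $E\exp(\delta_iW_i)\leq H$), a second-order Taylor expansion
\[
0=\psi_i(\theta_i^*)=\theta_i^*\psi_i'(0)+\tfrac{1}{2}(\theta_i^*)^2\psi_i''(\xi_i)
\]
yields $\theta_i^*\geq 2c_0/M=:\theta_{\min}>0$, uniformly in $i$ and $d$. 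Choosing $m=\lceil \log(2d)/\theta_{\min}\rceil=O(\log d)$ makes Condition~1 hold.

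Conditional on $K=k$ under the proposal, the coordinate $C_k(n)$ is a random walk with positive per-step drift
\[
\bar{\mu}_k=\frac{\psi_k'(\theta_k^*)}{\lambda_d\,\phi_k(\theta_k^*)},
\]
and $N_m\leq N_m^{(k)}:=\inf\{n\geq 0:C_k(n)>m\}$ on $\{K=k\}$. Wald's identity applied at $N_m^{(k)}$ gives
\[
\bar{\mu}_k\,E_0'[N_m^{(k)}\mid K=k]=E_0'[C_k(N_m^{(k)})\mid K=k]\leq m+E_0'[W_k'(N_m^{(k)})\mid K=k]\leq m+B,
\]
where $B=B(H,\alpha)$ is a uniform bound on the tilted overshoot: the $\alpha$-margin $E\exp((\theta_k^*+\alpha)W_k)\leq H$ controls the exponential tail of $W_k'$ under the tilted law, so a single jump in coordinate $k$ has uniformly bounded tilted mean.

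The main obstacle is a uniform positive lower bound on $\bar{\mu}_k$, equivalently on $\psi_k'(\theta_k^*)$. I would use the identity $\int_0^{\theta_k^*}\psi_k'(s)\,ds=\psi_k(\theta_k^*)-\psi_k(0)=0$: since $\psi_k'$ is increasing (by convexity), starts at $\psi_k'(0)\leq -c_0$, and has uniformly bounded derivative $\psi_k''\leq M$, the zero $\theta_k^{**}$ of $\psi_k'$ lies in a controlled range, and $\psi_k'$ must grow by at least $c_0$ over $[\theta_k^{**},\theta_k^*]$ to make the integral vanish, forcing $\psi_k'(\theta_k^*)\geq c_1(\delta_0,a,H,\alpha)>0$. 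Combined with $\phi_k(\theta_k^*)\leq H$ and $\lambda_d$ bounded in C3, this delivers $\bar{\mu}_k\geq\bar{\mu}_{\min}>0$ uniformly in $k$ and $d$.

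Averaging over $K$ with $\sum_k w_k=1$ gives
\[
E_0'[N_m]\leq \sum_{k=1}^d w_k\,E_0'[N_m^{(k)}\mid K=k]\leq \frac{m+B}{\bar{\mu}_{\min}}=O(\log d)
\]
steps of the walk. Finally, each step samples the $d$-vector $\mathbf{W}'$ together with one exponential interarrival $U'$, i.e.\ $O(d)$ scalar random variables, so the total number of random variables generated in Step 4 is $O(d\log d)$, matching the claim. The technical heart is the uniform lower bound on $\psi_k'(\theta_k^*)$; everything else is a routine Wald-plus-overshoot calculation.
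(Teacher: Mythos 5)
Your overall skeleton is the same as the paper's: a Taylor-expansion lower bound $\theta_i^*\geq 2c_0/M$ using $\psi_i'(0)\leq-c_0$ and $\psi_i''\leq M$ (with $M$ built from $a$, $H$, $\alpha$), the choice $m=O(\log d)$ to satisfy Condition 1, and an optional-stopping/Wald argument on the tilted walk with a bounded overshoot, divided by a uniform lower bound on the tilted drift. Your single-coordinate version (Wald on $C_K$ conditional on $K=k$, using $N_m\leq N_m^{(k)}$) is in fact a cleaner accounting than the paper's martingale on the sum of coordinates, giving $O(\log d)$ steps times $O(d)$ scalars per step; the overshoot bound $E[C_k(N_m^{(k)})]\leq m+B$ is stated a bit loosely (the jump at the crossing time is size-biased, so you need Lorden's inequality or the tilted exponential tail, or the paper's truncation at level $B$), but that is minor and fixable.

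The genuine gap is exactly at what you call the technical heart: the uniform lower bound $\psi_k'(\theta_k^*)\geq c_1>0$. Your integral argument $\int_0^{\theta_k^*}\psi_k'(s)\,ds=0$ only says the positive area over $[\theta_k^{**},\theta_k^*]$ matches the negative area; since that positive area is at most $\psi_k'(\theta_k^*)\,(\theta_k^*-\theta_k^{**})$, you need an \emph{upper} bound on $\theta_k^*$ (equivalently a \emph{lower} bound on the curvature $\psi_k''$ on $[0,\theta_k^*]$), and nothing in your argument supplies one: the bound $\psi_k''\leq M$ controls $\theta_k^*$ from below, not above. Moreover, the moment assumptions alone (mean $\leq-\delta_0$, second moment $\leq a$, exponential moment $\leq H$ with margin $\alpha$) genuinely do not imply such a $c_1$: for an increment law with mass $1-p$ at $-c$ and mass $p$ at $+\varepsilon$, one has $\theta^*\approx \varepsilon^{-1}\log(1/p)$ and $\psi'(\theta^*)\approx\varepsilon$, which can be made arbitrarily small while $\delta_0,a,H,\alpha$ stay fixed. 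The paper closes this step by using the compound-Poisson structure rather than moments alone: the second Taylor expansion at $\theta_i^*$ gives $\psi_i'(\theta_i^*)=\tfrac{\theta_i^*}{2}\psi_i''(u\theta_i^*)$, and $\psi_i''$ is bounded below through the no-arrival event $\{U>1\}$ (probability $e^{-\lambda}\geq e^{-1/\zeta}$ by C3), on which $Z_i(1)$ equals the deterministic outflow of size $r_i\geq\delta_0$ (from C1), yielding a curvature bound of order $r_i^2e^{-\lambda}$; this is also why $\lambda$ and the $r_i$ enter the constant. Without some such structural input (atom of the increment distribution at a negative value of size bounded away from zero, with probability bounded away from zero), your constant $c_1$, and hence $\bar{\mu}_{\min}$, need not exist, and the proof does not go through as written.
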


\begin{pf}
First, let us consider the cases in which $W_{i}$ are uniformly bounded from
above by some constant $B$.

Recall that $\phi_i(\theta)=E_0[\exp(\theta Z_i(1))]$. Given $\operatorname{Index}=i$,
one can check that $E_0'[C_i(1)]= \dot{\phi}_i(\theta_i^*)/(\lambda
E[\exp(\theta^*_iW_i)])\geq\dot{\phi}_i(\theta_i^*)/(\lambda H)$.
$N_{m}$ is a stopping time and $C_{i}(N_{m})<m+B$. By the optional
sampling theorem, we have
\[
E[N_{m}]=\sum_{i=1}^d
\omega_i\frac{ E_0'[C_{i}(N_m)]}{E_0'[C_i(1)]}\leq \sum_{i=1}^d
\omega_i\frac{\lambda H(m+B)}{\dot{\phi}_i(\theta_i^*)}.
\]
For each $1\leq i\leq d$, we are going to estimate a lower bound for
$\dot{\phi}(\theta_i^*)$. Using Taylor's expansion around 0, we have
\[
\phi_{i}\bigl(\theta_{i}^{*}\bigr)=
\phi_{i}(0)+\theta_{i}^{*}\dot{
\phi}_{i}%
(0)+\frac{(\theta^{*}_{i})^{2}}{2}\ddot{\phi}_{i}
\bigl(u_1\theta^{*}%
_{i}\bigr),
\]
for some $u_1\in[0,1]$. As $\phi_{i}(\theta_{i}^{*})=\phi_{i}(0)=1$, we have
\[
\theta_{i}^{*}\dot{\phi}_{i}(0)+
\frac{(\theta^{*}_{i})^{2}}{2}%
\ddot{\phi}_{i}\bigl(u_1
\theta^{*}_{i}\bigr)=0.
\]
As $\theta^{*}_{i}>0$,
%
\begin{equation}
\dot{\phi}_{i}(0)+\frac{\theta^{*}_{i}}{2}\ddot{\phi} _{i}
\bigl(u_1\theta^{*}_{i}\bigr)=0.\label{theta}%
\end{equation}
Under assumption (C1), $\dot{\phi}_{i}(0)=E_0[Z_{i}(1)]<-\delta$. Under
assumption (C2), we have that
\begin{eqnarray*}
E_0\bigl[\exp\bigl(\bigl(\delta+\theta_i^*
\bigr)Z_{i}(1)\bigr)\bigr]&\leq&\exp\bigl(\lambda\log\bigl(E\bigl[\exp
\bigl(\bigl(\delta+\theta_i^*\bigr) W_i\bigr)\bigr]\bigr)
\bigr)\\
&\leq& H^\lambda\leq H^H\triangleq H_1<\infty.
\end{eqnarray*}
As a result,
\begin{eqnarray*}
\ddot{\phi}_{i}\bigl(u_1\theta^{*}_{i}
\bigr) & =&E\bigl[Z_{i}(1)^{2}\exp\bigl(u_1\theta
^{*}_{i}Z_{i}(1)\bigr)\bigr]
\\
& \leq& E\bigl[Z_{i}(1)^{2}I\bigl(Z_{i}(1)\leq0
\bigr)\bigr]+E\bigl[Z_{i}(1)^{2}\exp\bigl(
\theta^{*}_{i} 
Z_{i}(1)\bigr)I
\bigl(Z_{i}(0)>0\bigr)\bigr]
\\
& \leq& E\bigl[Z_{i}(1)^{2}\bigr]+E\bigl[Z_{i}(1)^{2}
\exp\bigl(\theta^{*}_{i}Z_{i}(1)\bigr)I
\bigl(Z_{i} 
(0)>0\bigr)\bigr]
\\
&\leq& E\bigl[Z_i(1)^2\bigr]+ E\bigl[Z_{i}(1)^{2}%
\exp\bigl(-\delta Z_{i}(1)\bigr)\cdot\exp\bigl(\bigl(\delta+
\theta_i^*\bigr) Z_{i}(1)\bigr)\bigr].
\end{eqnarray*}
Besides, one can check that for any $x>0$, $x^{2}\exp(-\delta x)\leq
4e^{-2}/\delta^2$. Therefore,
\begin{eqnarray*}
\ddot{\phi}_{i}\bigl(u\theta^{*}_{i}\bigr) &
\leq
& E\bigl[Z_{i}(1)^{2}\bigr]+\frac
{4}{\delta^2}e^{-2}E
\bigl[\exp\bigl(\bigl(\delta+\theta_i^*\bigr) Z_{i}(1)
\bigr)\bigr]
\\
& \leq& H+\frac{4}{\delta^2}e^{-2}H_1.
\end{eqnarray*}
Plug this result into equation (\ref{theta}) and use that $\dot{\phi
}_i(0)<-\delta$ to complete the inequality
%
\begin{equation}
\theta^{*}_{i}\geq\frac{2\delta}{H+4e^{-2}H_1/\delta^2}.\label{thetastar}
\end{equation}

On the other hand, by a Taylor expansion of $\phi_i(\cdot)$ around
$\theta_i^*$, we can conclude that
%
\begin{equation}
\dot{\phi}_i\bigl(\theta^*_i\bigr)=\frac{\theta^*_i}{2}
\ddot{\phi}\bigl(u_2\theta _i^*\bigr),\label{phi_prime}
\end{equation}
for some $u_2\in[0,1]$.
Note that
\begin{eqnarray*}
\ddot{\phi}_{i}\bigl(u_2\theta^{*}_{i}
\bigr) & =&E_0\bigl[Z_{i}(1)^{2}\exp
\bigl(u_2\theta ^{*}_{i}Z_{i}(1)
\bigr)\bigr] \geq E_0\bigl[Z_{i}(1)^{2}\exp
\bigl(u_2\theta ^*_iZ_i(1)\bigr)I(U>1)\bigr]
\\
&\geq& E\bigl[\mu_i^2\exp\bigl(-\theta^*_i
\mu_i\bigr)I(U>1)\bigr]\geq\mu_i^2\exp(-H\mu
_i)\exp(-\lambda)\\
&\geq&\delta^2\exp\bigl(-H^2-H
\bigr).
\end{eqnarray*}
Thus (\ref{thetastar}) together with (\ref{phi_prime}) imply
%
\begin{equation}
\dot{\phi}_{i}\bigl(\theta^{*}_{i}\bigr)\geq
\frac{1}{2}\theta^{*}_{i} \delta^{2}
e^{-H^2-H}%
\geq\frac{\delta^3 e^{-H^2-H}}{H+4e^{-2}H_1/\delta^2}.\label{phi_prime_bound}
\end{equation}
Note that for lower bound (\ref{phi_prime_bound}) to hold, we do not
require $W_i$ to be bounded.

Therefore,
\[
E[N_{m}]\leq\sum_{i=1}^d
\omega_i\frac{
\lambda H(m+B)}{\dot{\phi}_i(\theta_i^*)} \leq\frac{\lambda H(m+B)(H+4e^{-2}H_1/\delta^2)}{\delta^3 e^{-H^2-H}%
}, %
\]
as $\omega_i>0$ and $\sum_i \omega_i=1$.

By (\ref{thetastar}), we have that $\theta^*_i$ are all uniformly
bounded away from 0, so we can choose $m=O(\log d/\min_{i}
\theta^{*}_{i})=O(\log d)$ to satisfy equation (\ref{L1}). Now we can
conclude that $E[N_{m}]=O(\log d)$ as $B$, $H$ and $\delta$ are all
constants independent of~$d$.

Now, let us consider the more general cases when the $W_{i}$'s are not
bounded from
above. Recall that $\mathbf{W}'$ is derived from $\mathbf{W}$ by
exponential tilting; see (\ref{J_P2}). For any $B>0$, define $\tilde
{\mathbf{W}}'$ by $\tilde{W}'_{i}=W'_{i}I(W'_i\leq B)$ as the
truncation of $\mathbf{W}'$, and define the random walk $\tilde
{C}_{i}(n)=\tilde{C}%
_{i}(n-1)+\tilde{W}'_{i}(n)-U'(n)\mu_{i}$. Let $\tilde{N}_{m}=\inf\{n\dvtx \tilde{C}_{i}(n)>m\mbox{ for some }i\}$. Since $\tilde{C}_{i}(n)\leq
C_{i}%
(n)$, we have $\tilde{N}_{m}\leq N_{m}$. Our goal is to show that one
can choose a proper value for $B$ such that $E[\tilde{N}_m]=O(\log d)$
and hence so is $E[N_m]$.

Since $\tilde{W}'_{i}$ is
bounded from above by $B$, by the optimal stopping theorem, we have
\[
E[\tilde{N}_{m}]\leq\sum_{i=1}^d
\omega_i\frac{m+B}{E[\tilde{C}_i(1)]}.
\]
By definition,
\[
E\bigl[\tilde{C}_i(1)\bigr]= E\bigl[\bigl(W_iI(W_i
\leq B)-U\mu_i\bigr)\exp\bigl(\theta _i^*
\bigl(W_iI(W_i\leq B)-U\mu_i\bigr)\bigr)
\bigr]. %
\]
Since $U\mu_{i}\geq0$, we have
\begin{eqnarray*}
&& E\bigl[\bigl(W_{i}I(W_i\leq B)-U\mu_{i}
\bigr)\exp\bigl(\theta^{*}_{i} \bigl(W_{i}I(W_i
\leq B)-U\mu_{i}\bigr)\bigr)\bigr]
\\
&&\qquad\geq  E\bigl[(W_{i}-U\mu_i)\exp\bigl(\theta^{*}_{i}(W_{i}-U
\mu_{i})\bigr)\bigr]-E\bigl[W_{i}\exp \bigl(\theta^{*}_{i}W_{i}
\bigr)I(W_{i}>B)\bigr].
\end{eqnarray*}
By assumption (C2), $\delta$ and $H>0$ are constants independent of $d$
such that
\[
E \bigl[\exp\bigl(\bigl(\delta+\theta^*_i\bigr) W_i
\bigr)\bigr]\leq H<\infty. %
\]
As a consequence,
\begin{eqnarray*}
E\bigl[W_{i}\exp\bigl(\theta^{*}_{i}W_{i}
\bigr)I(W_{i}>B)\bigr]&\leq& E\bigl[W_i\exp(-\delta
W_i)I(W_i>B)\exp\bigl(\bigl(\delta+\theta_i^*
\bigr) W_i\bigr)\bigr]
\\
&\leq&\max_{w>B}\bigl\{w\exp(-\delta w)\bigr\}E\bigl[\exp
\bigl(\bigl(\delta+\theta_i^*\bigr) W_i\bigr)\bigr]\\
&\leq& B
\exp(-\delta B)H
\end{eqnarray*}
for all $B>1/\delta$.
Recall that by (\ref{phi_prime_bound}),
\begin{eqnarray*}
E\bigl[(W_{i}-U\mu_i)\exp\bigl(\theta^{*}_{i}(W_{i}-U
\mu_i)\bigr)\bigr]&=&E\bigl[C_i(1)\bigr]\geq\dot {
\phi}_i\bigl(\theta_i^*\bigr)/(\lambda H)\\
&\geq&
\frac{\delta^3 e^{-H^2-H}}{\lambda
H(H+4e^{-2}H_1/\delta^2)},
\end{eqnarray*}
where $H_1=H^H$. Therefore, we can take $B=O(-\frac{1}{\delta}\log(\frac
{\delta^3 e^{-H^2-H}}{2\lambda H^2(H+4{\delta}e^{-2}H_1/\delta^2)}))$
independent
of $d$ such that
\begin{eqnarray*}
B\exp(-\delta B)H&<&\frac{\delta^3 e^{-H^2-H}}{2\lambda
H(H+4e^{-2}H_1/\delta^2)}\quad \mbox{and hence}\\
 E\bigl[
\tilde{C}_i(1)\bigr]&\geq& \frac{\delta^3 e^{-H^2-H}}{2\lambda H(H+4e^{-2}H_1/\delta^2)}. %
\end{eqnarray*}
In the end, since $m=O(\log(d))$, we have
\[
E[N_{m}]\leq E[\tilde{N}_{m}]\leq\frac{ 2\lambda
H(m+B)(2H+8e^{-2}H_1/\delta^2)}{\delta^3 e^{-H^2-H}}=O(\log
d).
\]
\upqed\end{pf}
Now we give the proof of the main result in this subsection.
\begin{pf*}{Proof of Theorem~\ref{th2}}
Recall that
\[
E[N]=E[N_b]\bigl(E[N_{a}]+E[N_{m}]\bigr).
\]
Since $N_b$ is the number of trials required to obtain $I=0$, $E[N_b]=
1/P(I=0)$. As discussed in Section~\ref{sec2.3.1}, $P(I=0)\geq1-\sum_{i=1}^{d}
\exp(-\theta^{*}_{i} m)$ and hence
\[
E[N_b]\leq\frac{1}{1-\sum_{i=1}^{d} \exp(-\theta^{*}_{i} m)}\leq\frac
{1}{1-{1}/{d}}
\]
if we take $m=2\log d/\min_{i}\theta^{*}_{i}$.

Similarly, we have $E[N_{a}]=1/P(U>(m+W_{i})/\mu_{i}, \forall  i)$. For
any $K>0$,
\[
P\biggl(U>\frac{m+W_{i}}{\mu_{i}}, \forall i\biggr)\geq P\biggl(U>\frac{m+K}{\min_{i} \mu_{i}};
W_{i}\leq K\mbox{ for all }i\biggr).
\]
Under assumption (C2), we have
\[
P(W_{i}\leq K\mbox{ for all }i)\geq1-\sum
_{i=1}^{d}P(W_{i}>K)\geq 1-dH\exp{(- K
\delta)}.
\]
Under assumption (C3), we have
\[
P\biggl(U>\frac{m+K}{\min_i\mu_i}\biggr)\geq\exp\biggl(-\frac{H(m+K)}{\min_i\mu_i}\biggr).
\]
As $U$ and $\mathbf{W}$ are independent,
\[
P\biggl(U>\frac{m+W_{i}}{\mu_{i}}, \forall i\biggr)\geq\exp \biggl( -
\frac
{H(m+K)}{\min_{i}
\mu_{i}} \biggr) \bigl(1-dH\exp(-K\delta)\bigr).
\]
Choosing
$K=(2\log{d}+\log{H})/\delta$ and plugging in $m=2\log d/\min_i\theta
_i^*$, we get
\[
E[N_{a}]\leq\frac{1}{1-{1}/{d}} d^{(2H/(\min_{i} \mu_{i}\min
_{i}\theta^{*}_{i})+2H/(\delta\min_i\mu_i))} H^{H/(\delta
\min_i\mu_i)}.
\]

By Proposition~\ref{pr4} we
have $E[N_{m}]=O(\log d)$. In summary, we have
\begin{eqnarray*}
E[N]&=&E[N_b]\bigl(E[N_{a}]+E[N_{m}]\bigr)=O
\biggl(\biggl(\frac{1}{1-{1}/{d}}\biggr)^2 \log d d^{{2H}/{(\min_{i}
\mu_{i}\min_{i}\theta^{*}_{i})}}
\biggr)
\\
&=&O\bigl(d^{1+{2H}/{(\min_{i} \mu_{i}\min
_{i}\theta^{*}_{i})}}\bigr).
\end{eqnarray*}

As discussed in the proof of Proposition~\ref{pr4}, $\theta^{*}_{i}\geq
\delta/(H+4e^{-2}H_1/\delta^2)$ and $\mu_{i}\geq\delta$ are
uniformly bounded away from $0$, therefore,
\[
E[N]=O\bigl(d^{1+{2H(H+4e^{-2}H_1/\delta)}/{\delta^2}}\bigr).
\]
\upqed\end{pf*}

\section{Extension to Markov-modulated processes}
\label{SectionExtension}

We shall briefly explain how our development in Section~\ref{SecCompPoi}, specifically
Algorithm \ref{alg1}, can be implemented beyond input with stationary and
independent increments. As an example, we shall concentrate on
Markov-modulated stochastic fluid networks. Our extension to Markov-modulated
networks is first explained in the one-dimensional case, and later we will
indicate how to treat the multidimensional setting.

Let $ ( \hat{I} ( t ) \dvtx t\geq0 ) $ be an irreducible
continuous-time Markov chain taking values on the set $\{1,\ldots,n\}$. We
assume that, conditional on $\hat{I} ( \cdot ) $, the number of
arrivals, $\hat{N} ( \cdot ) $, follows a time-inhomogeneous
Poisson process with rate $\lambda_{\hat{I} ( \cdot ) }$. We
further assume that $\int_{0}^{t}\lambda_{\hat{I} ( s ) }\,ds>0$
with positive probability. The process $\hat{N} ( \cdot ) $ is
said to be a Markov-modulated Poisson process with intensity $\lambda
_{\hat{I} ( \cdot ) }$. Define $\hat{A}_k$ to be the time of
the $k$th arrival, for $k\geq1$; that is, $\hat{A}_k=\inf\{t\geq0\dvtx \hat{N
} ( t ) =k\}$.

We assume that the $k$th arrival brings a job requirement equal to $%
\hat{W} ( k ) $. We also assume that the $\hat{W} (
k ) $'s are conditionally independent given the process $\hat
{I} ( \cdot ) $. Moreover, we assume that the moment-generating
function $\phi_{i} ( \cdot ) $ defined via
\[
\phi_{i} ( \theta ) =E \bigl( \exp \bigl( \theta\hat{X} ( k ) \bigr)
|\hat{I} ( \hat{A}_k ) =i \bigr),
\]
is finite in a neighborhood of the origin. In simple words, the job
requirement of the $k$th arrival might depend upon the environment, $%
\hat{I} ( \cdot ) $, at the time of arrival. But, conditional on
the environment, the job sizes are independent. Finally, we assume that the
service rate at time $t$ is equal to $\mu_{\hat{I} ( t ) }\geq0$.

Let $\hat{X}(t)=\sum_{k=1}^{\hat{N}(t)}\hat{W}({k})-\int_{0}^{t}\mu
_{\hat{I} (s)}\,ds$. Then the workload process, $(Y(t)\dvtx t\geq0)$, can
be expressed as
\[
Y(t)=\hat{X}(t)-\inf_{0\leq s\leq t}\hat{X}(s),
\]
assuming that $Y(0)=0$. In order for the process $Y (
\cdot ) $ to be stable, in the sense of having a stationary
distribution, we assume that $\sum_i\pi_i(\lambda_iE[
\hat{W}|\hat{I}=i]-\mu_i)<0$, where $\pi_i$ is the stationary
distribution of the Markov chain $\hat{I}$. Following the same argument
as in Section~\ref{SecCompPoi}, we can
construct a stationary version of the process $Y ( \cdot ) $
by a
time reversal argument.

Since $\hat{I}(\cdot)$ is irreducible, one can define its associated
\textit{stationary} time-reversed Markov chain ${I(\cdot)}$ with transition
rate matrix $\mathcal{A}$; for the existence and detailed description of
such reversed chain, see Chapter~2.5 of \citet{Asmussen2003}. Let us
write $%
N ( \cdot ) $ to denote a Markov-modulated Poisson process with
intensity $\lambda_{I ( \cdot ) }$, and let $A_{k}=\inf
\{t\geq0\dvtx N ( t ) =k\}$. We consider a sequence $ ( W (
k ) \dvtx k\geq1 ) $ of conditionally independent random variables
representing the service requirements (backward in time) such that $%
\phi_{i} ( \theta ) =E ( \exp ( \theta W ( k )
 ) |I ( A_{k} ) =i ) $.

We then can define $Z(t)=\sum_{k=1}^{N(t)}W ( k )
-\int_{0}^{t}\mu_{I(s)}\,ds$. Following the same arguments as in
Section~\ref{SecCompPoi}, we
can run a stationary version $Y^{\ast}$ of $Y$ backward via the process
\[
Y^{\ast}(-t)=\sup_{s\geq t} \bigl( Z(s)-Z(t) \bigr).
\]
Therefore, $Y^{\ast}(-t)$ can be simulated exactly as long as a convenient
change of measure can be constructed for the process $ ( I (
\cdot ),Z(\cdot) ) $, so that a suitable adaptation of Algorithm
\ref{alg1.1.1} can be applied. Once the adaptation of Algorithm \ref{alg1.1.1} is in place,
the adaptation of Algorithms~\ref{alg1.1} and~\ref{alg1} is straightforward.

In order to define such change of measure, let us define the matrix $%
\mathcal{M}(\theta,t)\in\mathbb{R}^{n\times n}$, for $t\geq0$, via
\[
\mathcal{M}_{ij}(\theta,t)=E_{i}\bigl[\exp\bigl(\theta
Z(t)\bigr);I(t)=j\bigr],
\]
where the notation $E_{i} ( \cdot ) $ means that $I (
0 )
=i$. Note that $\mathcal{M}(\cdot,t)$ is well defined in a neighborhood of
the origin. In what follows we assume that $\theta$ is such that all
coordinates of $\mathcal{M}%
(\theta,t)$ are finite.

It is known [see, e.g., Chapters~11.2 and~13.8 of %
\citet{Asmussen2003} and the references therein] that $\mathcal{M}%
(\theta,t)=\exp(tG(\theta))$ where the matrix $G$ is
defined by
\[
G_{ij}(\theta)=%
\cases{\mathcal{A}_{ij}, &\quad $\mbox{if }i\neq j$, \vspace*{2pt}
\cr
\mathcal{A}_{ii}-
\mu_{i}\theta+\lambda_{i}\phi_{i}(\theta), &\quad $\mbox{if }i=j.$}
\]
Besides, $G(\theta)$ has a unique eigenvalue $\beta(\theta)$
corresponding to a strictly positive eigenvector $(u ( i,\theta
 )
\dvtx 1\leq i\leq n)$. The eigenvalue $\beta(\theta)$ has the following
properties which follow from Propositions~2.4 and~2.10 in
Chapter~11.2 of \citet{Asmussen2003}:

\begin{lemma}
\label{PropSummaryMM}
\begin{longlist}[(1)]
\item[(1)] $\beta(\theta)$ is convex in $\theta$ and $\dot{\beta}(\theta)$ is well defined.

\item[(2)] $\lim_{t\rightarrow\infty}Z(t)/t=\dot{\beta}(0)=\lim_{t\rightarrow
\infty}\hat{X} ( t ) /t<0$.

\item[(3)] $ ( M ( t,\theta ) \dvtx t\geq0 ) $ defined via
\[
M ( t,\theta ) =\frac{u(I ( t ),\theta)}{u(I (
0 ),\theta)}\exp\bigl(\theta Z(t)-t\beta(\theta)\bigr)
\]
is a martingale.
\end{longlist}
\end{lemma}

As explained in Chapter~13.8 of \citet{Asmussen2003}, the martingale $%
M ( \cdot ) $ induces a change of measure for the process
$ (
I ( \cdot ),Z(\cdot) ) $ as we shall explain. Let $P$ be the
probability law of $(I(\cdot),Z(\cdot))$, and define a new probability
measure $\tilde{P}$ for $(I(s),Z(s)\dvtx s\leq t)$ as $d\tilde{P}=M (
t,\theta ) \,dP$.

We now describe the law of $ ( I ( \cdot ),Z (
\cdot )  ) $ under $\tilde{P}$. The process $I(\cdot)$ is a
continuous time Markov chain with rate matrix $\widetilde{\mathcal
{A}}%
_{ij}=\mathcal{A}_{ij}u(j,\theta)/u ( i,\theta ) $ for $i\neq j$
(and $\widetilde{\mathcal{A}}_{ii}=-\sum_{j\neq i}\widetilde{\mathcal
{A}}%
_{ij}$). In addition,
\[
Z(t)\stackrel{d} {=}\sum_{k=1}^{\tilde{N}(t)}
\tilde{W} ( k ) -\int_{0}^{t}\mu_{I ( s ) }\,ds,
\]
where $\tilde{N}$ is a Markov-modulated Poisson process with rate at
time $t
$ equal to $\phi_{I ( t ) } ( \theta ) \lambda
(I(t))$, and
the $\tilde{W} ( k ) $'s are conditionally independent given $%
I ( \cdot ) $ with moment generating function $\widetilde{\phi}
_{i} ( \cdot ) $ defined via
\[
\widetilde{\phi}_{i} ( \eta;\theta ) =\widetilde{E}\bigl(\exp\bigl(
\eta \tilde{W} ( k ) \bigr)|A_{k}=i\bigr)=\phi_{i} ( \eta+
\theta ) /\phi_{i} ( \eta ),
\]
which is finite in a neighborhood of the origin. In addition, $%
Z(t)/t\rightarrow\dot{\beta}(\theta)$ under~$\tilde{P}$.

Because of the stability condition of the system, we have that $\dot
{\beta}(0)<0$. Then, following the same argument as in the remark given
at the end
of Section~\ref{sec2.3}, we may assume the existence of the Cramer root $\theta
^{\ast
}>0$ such that $\beta(\theta^{\ast})=0$ and $\dot{\beta} (
\theta^{\ast} ) >0$. The change of measure that allows adaption of
Algorithm \ref{alg1.1.1} is given by selecting $\theta^{\ast}>0$ as indicated. Now,
select $m>0$ such that
%
\begin{equation}
K:=\exp \bigl( -\theta^{\ast}m \bigr) \max_{i,j}
\frac{u ( i,\theta
^{\ast} ) }{u ( j,\theta^{\ast} ) }\leq1. \label{SelectionK}
\end{equation}
We will use the notation $P_{0,i} ( \cdot ) $ to denote the
law $%
P ( \cdot ) $ conditional on $Z ( 0 ) =0$ and $I (
0 ) =i$. Let us write $P_{0,i}^{\ast} ( \cdot ) $ to denote
the law of $(Z ( t ) \dvtx 0\leq t\leq T_{m})$ [under $P_{0,i} (
\cdot ) $] conditional on $T_{m}<\infty$. Further, we write $\tilde
{P}%
_{0,i} ( \cdot ) $ to denote the law of $\tilde{P} (
\cdot ) $, selecting $\theta=\theta^{\ast}$, conditional on $Z (
0 ) =0$ and $I ( 0 ) =i$. Then we have that $\tilde{P}%
_{0,i} ( T_{m}<\infty ) =1 $ [by Lemma~\ref{PropSummaryMM}
since $%
\dot{\beta} ( \theta^{\ast} ) >0$], and therefore [by (\ref%
{SelectionK})], we have
\begin{eqnarray*}
&&\frac{dP_{0,i}^{\ast}}{d\tilde{P}_{0,i}} \bigl( \bigl( I ( t ) ,Z ( t ) \bigr) \dvtx 0\leq t\leq
T_{m} \bigr) \\
&&\qquad=\frac{u (
i,\theta^{\ast} ) }{u ( I ( T_{m} ),\theta^{\ast
} ) }%
\times\frac{\exp ( -\theta^{\ast}Z ( T_{m} )  ) I (
T_{m}<\infty ) }{P_{0,i} ( T_{m}<\infty ) }
\\
&&\qquad\leq\frac{K}{P_{0,i} ( T_{m}<\infty ) }\leq\frac
{1}{P_{0,i} (
T_{m}<\infty ) }.
\end{eqnarray*}
It is clear from this identity, which is completely analogous to
identities (%
\ref{ARB}) and~(\ref{ARB_1}), which are the basis for Algorithm \ref{alg1.1.1}, that
the corresponding adaptation to our current setting follows.

For the $d$-dimensional case ($d>1$), we first assume the existence of the
Cramer root $\theta_{j}^{\ast}>0$ for each dimension $j\in\{1,\ldots,d\}
$. In
this setting we also must compute the corresponding positive
eigenvector $(
u_{j}( i,\theta_{j}^{\ast}) \dvtx 1\leq i\leq n) $ for each $j\in\{1,\ldots
,d\}$.
The desired change of measure that allows the adaptation of Algorithm \ref{alg1.1.1}
is just a mixture of changes of measures such as those described above
induced by $M( \cdot,\theta_{j}^{\ast}) $ in each direction, just as
discussed in Section~\ref{sec2.3.1}, with weight $w_{j}=\exp( -\theta_{j}^{\ast}m)
/\sum_{k=1}^{m}\exp( -\theta_{k}^{\ast}m) $. The corresponding likelihood
ratio is then
\begin{eqnarray*}
&&\frac{dP_{0,i}^{\ast}}{d\tilde{P}_{0,i}} \bigl( \bigl( I ( t ) ,Z ( t ) \bigr) \dvtx 0\leq t\leq
T_{m} \bigr)
\\
&&\qquad=\frac{1}{\sum_{j=1}^{d}w_{j}\exp ( \theta_{j}^{\ast}Z_{j} (
T_{m} )  ) u_{j} ( I ( T_{m} )
,\theta_{j}^{\ast} ) /u_{j} ( i,\theta_{j}^{\ast} ) },
\end{eqnarray*}
and $m$ must be selected so that
\[
\sum_{j=1}^{d}\exp \bigl( -
\theta_{j}^{\ast}m \bigr) \sup_{j,i,k}
\frac{%
u_{j} ( i,\theta_{j}^{\ast} ) }{u_{j} ( k,\theta_{j}^{\ast
} ) }\leq1.
\]

\section{Algorithm for reflected Brownian motion}
\label{SecRBM}

In this section, we revise our algorithm and explain how we can apply
it to
the case of reflected Brownian motion. Consider a multidimensional Brownian
motion
\[
\mathbf{X} ( t ) =\mathbf{v}t+A\mathbf{B} ( t ),
\]
where $\mathbf{v}\in\mathbb{R}^{d}$ is the drift vector, and $A\cdot
A^{T}\triangleq\Sigma\in\mathbb{R}^{d\times d}$ is the positive definite
covariance matrix. Our target process $\mathbf{Y}(t)$ is the solution
to the
following Skorokhod problem with input process $\mathbf{X}(\cdot)$ and
initial value $\mathbf{Y}(0)=\mathbf{y}_0$:
\begin{eqnarray*}
d\mathbf{Y} ( t ) & =&d\mathbf{X} ( t ) +R\,d\mathbf {L} ( t ), \qquad\mathbf{Y} (
0 ) =\mathbf{y}_{0},
\\
\mathbf{Y} ( t ) & \geq&0,\qquad Y_{j} ( t ) \,d L_{j} ( t )
\geq0,\qquad L_{j} ( 0 ) =0,\qquad %
dL_{j} ( t
) \geq0.
\end{eqnarray*}
We assume that the reflection matrix $R$ is an $M$-matrix of the form $%
R=I-Q^T$, where $Q$ has nonnegative coordinates and a spectral radius equal
to $\alpha<1$ so that $R^{-1}$ has only nonnegative elements; see page 304
of \citet{HarrisonReiman1981}. We also assume the stability condition $%
R^{-1}\mathbf{v<0}$ for the existence of the steady-state distribution. As
discussed in the \citet{HarrisonReiman1981}, there is a unique solution
pair $(\mathbf{Y},\mathbf{L})$ to the Skorokhod problem associated with
$%
\mathbf{X}$, and the process $\mathbf{Y}$ is called a reflected Brownian
Motion (RBM). We wish to sample $\mathbf{Y} ( \infty ) $ (at least
approximately, with a pre-defined controlled error).

The stochastic dominance result for reflected Brownian motions that is
analogous to Lemma~\ref{LmK_W_Comp} was first developed in the proof of
Lemma 12 in \citet{HarrisonWilliams1987}. In detail, we can construct
a dominating process $\mathbf{Y}%
^+(\cdot)$ as follows. First, we can choose $\mathbf{z}\in\mathbb{R}^d $
such that $\mathbf{v<z}$ and $R^{-1}\mathbf{z<0}$. Define a process
%
\begin{equation}
\mathbf{Z} ( t ) =\mathbf{X} ( t ) -\mathbf {z}t:=A\mathbf{B%
} ( t )-
\bolds{\mu}t, \label{BMinput}
\end{equation}
where $\bolds{\mu}=\mathbf{v-z}$, and let $\mathbf{Y}^+(\cdot)$ be
the RBM
corresponding to the Skorokhod problem (\ref{SP_Bnd}), which has orthogonal
reflection. Then $R^{-1}\mathbf{Y} ( t ) \leq R^{-1}\mathbf{Y}%
^{+} ( t ) $. As a result, we can assume without loss of generality
that the input Brownian motion has strictly negative drift
coordinatewise. In sum, the following assumption is in force
throughout this
section:

\renewcommand{\theass}{(D)}
\begin{ass}\label{assD}
The input process $\mathbf{Z}(\cdot)$
satisfies (\ref%
{BMinput}) with $\mu_i>\delta_0>0$ for all $1\leq i\leq d$, and we assume
that $A$ is nondegenerate so that $A^TA$ is positive definite.
\end{ass}

Since $\mathbf{Z}(\cdot)$ has strictly negative drift, following the same
argument given for Proposition~\ref{pr1}, we can construct a stationary version of
the dominating process as
%
\begin{equation}
\mathbf{Y}^+(-t)=-\mathbf{Z}(t)+\max_{u\geq t}\mathbf{Z}(u)
\triangleq \mathbf{Z}(t)-\mathbf{M}(t)\qquad \mbox{for all }t\geq0. \label{domBM}
\end{equation}
In order to apply the same strategy as in Algorithm \ref{alg1} to the RBM,
we need to address two problems. First, the input process $\mathbf{Z}$
requires a continuous path description while the computer can only encode
and generate discrete objects. Second, the dominating process is a reflected
Brownian motion with orthogonal reflection. Therefore the hitting time
$\tau$
to the origin is almost surely infinity [see \citet
{VaradhanWilliams1985}%
], which means that Algorithm \ref{alg1} will not terminate in finite time,
in this case. To solve the first problem, we take advantage of a wavelet
representation of Brownian motion and use it to simulate a piecewise linear
approximation with uniformly small (deterministic) error. To solve the
second problem, we define an approximated coalescent time $%
\tau_{\varepsilon} $ as the first passage time to a small ball around the
origin so that $E[\tau_{\varepsilon}]<\infty$ and the error caused by
replacing $\tau$ with $\tau_{\varepsilon}$ is bounded by $\varepsilon$. In
sum, we concede to an algorithm that is not exact but one that could give
any user-defined $\varepsilon$ precision. Nevertheless, at the end of
Section~\ref{sec4.1} we will show that we can actually use this $\varepsilon$-biased
algorithm to estimate without any bias the steady-state expectation of
continuous functions of RBM by introducing an extra randomization step.

Section~\ref{SecRBM} is organized as follows. In Section~\ref{sec4.1}, we will describe the main
strategy of our algorithm. In Section~\ref{SubWave}, we use a wavelet representation
to simulate a piecewise linear approximation of Brownian motion. In
Section~\ref{sec4.3}, we will discuss the details in simulating jointly $\tau
_{\varepsilon}$
and the stationary dominating process based on the techniques we have
already used for the compound Poisson cases. In the end, in
Section~\ref{SubSecCCRBM}, we
will give an estimate of the computational complexity of our algorithm.

\subsection{The structure of the main simulation procedure}\label{sec4.1}

The main strategy of the algorithm is almost the same as Algorithm \ref{alg1},
except for two modifications due to the two issues discussed above: first,
instead of simulating the input process $\mathbf{Z}$ exactly, we
simulate a
piecewise linear approximation $\mathbf{Z}^{\varepsilon}$ such that $%
|Z^{\varepsilon}_{i}(t)-Z_{i}(t)|<\varepsilon$ for all indices $i$ and $t\geq
0$; second,
instead of sampling the coalescence time $\tau$ such that $\mathbf
{M}(\tau)=%
\mathbf{Z}(\tau)$, we simulate an approximation coalescence time, $%
\tau_{\varepsilon}$, such that $\mathbf{M}(\tau_{\varepsilon})\leq\mathbf{Z}%
(\tau_{\varepsilon})+\bolds{\varepsilon}$.

With this notation, we now give the structure of our algorithm. The
details will be given later in Sections \ref{SubWave} and \ref{sec4.3}:

\renewcommand{\thealgorithm}{2}
\begin{algorithm}[{[Sampling with controlled error of $\mathbf{Y} (
\infty )$]}]\label{alg2}

\textit{Step} 1: Let $\tau_{\varepsilon}\geq0$ be any time for which
$\mathbf{M}%
 ( \tau_{\varepsilon} ) \leq\mathbf{Z} ( \tau_{\varepsilon}
) +%
\bolds{\varepsilon} $, and simulate, jointly with $\tau_{\varepsilon}$,
$\mathbf{Z}%
_{-\tau_{\varepsilon}}^{\leftarrow} ( t ) =-\mathbf{Z}%
^{\varepsilon} ( \tau_{\varepsilon}-t ) $ for $0\leq t\leq\tau
_{\varepsilon}$%
.

\textit{Step} 2: Define $\mathbf{X}_{-\tau_{\varepsilon}}^{\leftarrow} (
t ) =\mathbf{Z}^{\varepsilon}(\tau_{\varepsilon})-\mathbf{Z}^{\varepsilon
} (
\tau_{\varepsilon}-t ) +\mathbf{z}t$, and compute $\mathbf{Y}%
^{\varepsilon}_{-\tau_{\varepsilon}} ( \tau_{\varepsilon},0 ) $
which is obtained by evolving the solution $\mathbf{Y}^{\varepsilon}_{-\tau
_{%
\varepsilon}} ( \cdot,0 ) $ to the Skorokhod problem
\begin{eqnarray*}
d\mathbf{Y}^{\varepsilon}_{-\tau_{\varepsilon}} ( t,0 ) & =&d%
\mathbf{X}_{-\tau_{\varepsilon} }^{\leftarrow} ( t ) +R\,d\mathbf{L} 
_{-\tau}
( t,0 ),
\\
\mathbf{Y}^{\varepsilon}_{-\tau_{\varepsilon}} ( t,0 ) & \geq&0,%
\qquad Y^{\varepsilon}_{-\tau_{\varepsilon},j} ( t,0 ) \,dL_{-\tau_{\varepsilon},j}
( t,0 ) \geq0, \\
L_{-\tau_{\varepsilon},j} ( 0,0 )& =&0,\qquad dL_{-\tau
_{\varepsilon},j} ( t,0 )
\geq0,
\end{eqnarray*}
for $\tau_{\varepsilon}$ units of time.

\textit{Step} 3: Output $\mathbf{Y}^{\varepsilon}_{-\tau_{\varepsilon}}(\tau_{
\varepsilon},0)$.
\end{algorithm}

First, we show that there exists a stationary version $\{\mathbf{Y}%
^{*}(t)\dvtx t\leq0\}$ that is coupled with the dominating stationary
process $\{%
\mathbf{Y}^{+}(t)\dvtx t\leq0\}$ as given by (\ref{domBM}).

\begin{lemma}\label{le3}
There exists a stationary version $\{\mathbf{Y}^*(t)\dvtx t\leq0\}$ of
$\mathbf{Y}
$ such that $R^{-1}\mathbf{Y}^{*}(t)\leq R^{-1}\mathbf{Y}^{+}(t)$ for
all $%
t\leq0$.
\end{lemma}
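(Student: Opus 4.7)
The plan is to mimic, in the continuous-input setting, the construction used for the reversed dominating process in Section 2, but now for the reflection matrix $R$ rather than the identity. Concretely, I would extend the driving process $\mathbf{X}$ (equivalently $\mathbf{Z}=\mathbf{X}-\mathbf{z}t$) to a two-sided Brownian motion with drift defined for all $t\in\mathbb{R}$, and for each $T<0$ define $\mathbf{Y}_{T}(\cdot,\mathbf{0})$ to be the Skorokhod solution driven by $\mathbf{X}$ on $[T,\infty)$ with zero initial condition at time $T$. I would define the candidate stationary version by
\[
\mathbf{Y}^{\ast}(s)\ :=\ \lim_{T\to-\infty}\mathbf{Y}_{T}(s-T,\mathbf{0}),\qquad s\leq 0,
\]
and analogously $\mathbf{Y}^{+}(s)=\lim_{T\to-\infty}\mathbf{Y}^{+}_{T}(s-T,\mathbf{0})$ for the orthogonally reflected dominator, whose existence has already been established in Proposition~1.

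The two ingredients that make this construction work are Lemma~\ref{LmK_W_Comp} and the initial-condition monotonicity of the Skorokhod map from \citet{Ramasubramanian_2000}. For each fixed $T$, Lemma~\ref{LmK_W_Comp} gives $R^{-1}\mathbf{Y}_{T}(s-T,\mathbf{0})\leq R^{-1}\mathbf{Y}^{+}_{T}(s-T,\mathbf{0})$ for all $s\geq T$, supplying a uniform upper bound in the $R^{-1}$ partial order. For any $T_{1}<T_{2}\leq s$, the flow property of the Skorokhod problem together with $R^{-1}\mathbf{Y}_{T_{1}}(T_{2}-T_{1},\mathbf{0})\geq 0$ and the monotonicity of the Skorokhod map in its starting point yield
\[
R^{-1}\mathbf{Y}_{T_{1}}(s-T_{1},\mathbf{0})\ \geq\ R^{-1}\mathbf{Y}_{T_{2}}(s-T_{2},\mathbf{0}),
\]
so the family is nondecreasing as $T\searrow -\infty$. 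Combining monotonicity with the bound shows the limit $R^{-1}\mathbf{Y}^{\ast}(s)$ exists pathwise and automatically satisfies $R^{-1}\mathbf{Y}^{\ast}(s)\leq R^{-1}\mathbf{Y}^{+}(s)$, which is the desired domination. Nonnegativity of $\mathbf{Y}^{\ast}(s)$ is inherited from each $\mathbf{Y}_{T}(s-T,\mathbf{0})$.

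It remains to verify that the process $(\mathbf{Y}^{\ast}(s):s\leq 0)$ is stationary. Fix a shift $h>0$ and a finite collection $s_{1}<\cdots<s_{k}\leq 0$. Because $\mathbf{X}$ has stationary and independent increments, the joint law of $(\mathbf{Y}_{T}(s_{i}+h-T,\mathbf{0}))_{i\leq k}$ coincides with that of $(\mathbf{Y}_{T-h}(s_{i}-(T-h),\mathbf{0}))_{i\leq k}$; letting $T\to-\infty$ on the left and $T-h\to-\infty$ on the right gives shift invariance of the finite-dimensional distributions of $\mathbf{Y}^{\ast}$. The marginal identification $\mathbf{Y}^{\ast}(s)\stackrel{d}{=}\mathbf{Y}(\infty)$ then follows from the assumed stability (weak convergence $\mathbf{Y}_{T}(s-T,\mathbf{0})\Rightarrow\mathbf{Y}(\infty)$ as $s-T\to\infty$), and uniqueness of the stationary distribution identifies the full law.

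The main obstacle I anticipate is the pathwise control needed to turn the partial-order monotonicity and upper bound into an actual limit of vector-valued processes; in particular one needs the monotone convergence to hold componentwise after multiplication by $R^{-1}\geq 0$, which is fine, but then one must argue that the limiting process is itself a legitimate solution of the Skorokhod problem driven by $\mathbf{X}$ (so that the word "version of $\mathbf{Y}$" is justified). This is handled by passing to the limit in the Skorokhod constraints (the set of Skorokhod solutions is closed under the relevant monotone-uniform limits in the $R^{-1}$ ordering), but it is the step that requires the most care.
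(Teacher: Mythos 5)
Your construction is correct, but it is not the paper's: you run a Loynes-type scheme from below, starting the $R$-reflected process at $\mathbf{0}$ at time $T$ and showing via the flow property and Ramasubramanian's comparison theorem that $R^{-1}\mathbf{Y}_{T}(s-T,\mathbf{0})$ increases as $T\to-\infty$, with Lemma~\ref{LmK_W_Comp} supplying the finite upper bound $R^{-1}\mathbf{Y}^{+}(s)$; the paper instead starts each $\mathbf{Y}_{T}$ from the stationary dominating value $\mathbf{Y}^{+}(T)$, so that Lemma~\ref{LmK_W_Comp} applies directly (same initial condition for both reflections) and the same comparison theorem makes $R^{-1}\mathbf{Y}_{T}(t)$ \emph{decrease} in $T$, bounded below by $0$; stationarity of the limit is then delegated to Kendall's DCFTP argument rather than proved by your direct shift-invariance of finite-dimensional distributions. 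Both routes rest on the same two ingredients (Kella--Whitt domination and monotonicity of the Skorokhod map in the initial condition), and both leave the "limit solves the Skorokhod problem driven by $\mathbf{X}$" step somewhat compressed; your fdd argument is more self-contained on the stationarity side, while the paper's sandwich-from-above matches the process actually used in the coalescence argument later. One small point to tighten in your write-up: the bound $R^{-1}\mathbf{Y}^{+}_{T}(s-T,\mathbf{0})\leq R^{-1}\mathbf{Y}^{+}(s)$ is not literally Lemma~\ref{LmK_W_Comp}; it needs either the explicit running-maximum representation of Proposition~1 (under which $\mathbf{Y}^{+}_{T}(s-T,\mathbf{0})$ increases to $\mathbf{Y}^{+}(s)$) or monotonicity in the initial condition applied to the orthogonally reflected problem, so state that intermediate step explicitly.
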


\begin{pf}
The proof follows the same argument as that of Proposition~\ref{pr2}.
\end{pf}

The following proposition shows that the error of the above algorithm
has a
small and deterministic bound.

\begin{proposition}\label{pr5}
Suppose $\mathbf{X}\in\mathbb{R}^{d}$. Let 
$r=\max_{i,j}R^{-1}_{ij}/\min_{i,j}\{R_{ij}^{-1}\dvtx\break  R^{-1}_{ij}>0\}$. Then there exists a stationary version
$\mathbf{Y}^{*}$of $\mathbf{Y}$ such that in each index $i$,
\[
\bigl|Y_{i}^{*}(0)-Y^{\varepsilon}_{\tau_{\varepsilon},i}(
\tau_{\varepsilon},0)\bigr|\leq\biggl( 
\frac{1}{1-\alpha}+dr\biggr)
\varepsilon.
\]
Here $0\leq \alpha <1$ is the spectral radius of the matrix $Q$.%
\end{proposition}

\begin{pf}
Consider three processes on $[-\tau_{\varepsilon},0]$. The first is the coupled
stationary process $\mathbf{Y}^{*}(\cdot)$ as constructed in Lemma~\ref{le3},
which is
the solution to the Skorokhod problem with initial value $\mathbf{Y}^{*}
(-\tau_{\varepsilon})$ at time $-\tau_{\varepsilon}$ and input process
$\tilde{\mathbf{X}}(\cdot)=\mathbf{X}(\tau_{\varepsilon})-\mathbf{X(-\cdot
)}$ on
$[-\tau_{\varepsilon},0]$; the second is a process $\tilde{\mathbf
{Y}}(\cdot)$,
which is the solution to the Skorokhod problem with initial value $0$
at time
$-\tau_{\varepsilon}$ and input process $\tilde{\mathbf{X}}(\cdot)$; the
third is
the process $\mathbf{Y}^{\varepsilon}_{-\tau_{\varepsilon}}(t,0)$ as we
described in
the algorithm, which is the solution to the Skorokhod problem with initial
value $0$ at time $-\tau_{\varepsilon}$ and input process $\mathbf{X}%
_{-\tau_{\varepsilon} }^{\leftarrow} ( t ) $ as defined in step 2
of Algorithm \ref{alg2}.\vadjust{\goodbreak}

By definition, we know that for each index $i$, $|Y_{i}^{+}
(-\tau_{\varepsilon})|<\varepsilon$. Since\break $R^{-1}\mathbf{Y}(\tau_{\varepsilon
})\leq
R^{-1}\mathbf{Y}^{+}(\tau_{\varepsilon})$, the coupled process $Y_{i}^{*}%
(-\tau_{\varepsilon})<dr\,\varepsilon$. Note that $\mathbf{Y}^{*}(\cdot)$ has the
same input data as $\tilde{\mathbf{Y}}(\cdot)$ except for their initial values.
According to the comparison theorem of \citet{Ramasubramanian2000}, the
difference between these two
processes is uniformly bounded by the difference of their initial
values coordinate-wise. Therefore, we can conclude $|Y^{*}_{i}(0)-\tilde
{Y}%
_{i}(0)|<dr\,\varepsilon$.

On the other hand, $\tilde{\mathbf{Y}}(\cdot)$ and $\mathbf{Y}^{\varepsilon
}_{-\tau_{\varepsilon}}(\cdot,0)$ have common initial value 0 and input processes
whose difference is uniformly bounded by $\varepsilon$. It was proved in
\citet{HarrisonReiman1981} that the Skorokhod mapping is Lipschitz\break 
continuous under the uniform metric $d_T(Y^1(\cdot),Y^2(\cdot
))\triangleq\break \max_{1\leq i\leq d}\sup_{0\leq t\leq
T}|Y_i^1(t)-Y_i^2(t)|$ for all $0<T<\infty$, and the Lipschitz constant
is equal to $1/(1-\alpha)$, where $0\leq\alpha<1$ is the spectral
radius of $Q$. Therefore, we have that $|\tilde{Y}_{i}(0)-Y^{\varepsilon
}_{-\tau_{\varepsilon},i}%
(\tau_{\varepsilon},0)|<\varepsilon/(1-\alpha)$.

Simply applying the triangle inequality, we obtain that
\[
\bigl|Y_{i}^{*}(0)-Y^{\varepsilon}_{\tau_{\varepsilon},i}(
\tau_{\varepsilon},0)\bigr|\leq \biggl(\frac{1}{1-\alpha}+dr\biggr)\varepsilon.
\]
\upqed\end{pf}

We conclude this subsection by explaining how to remove the $\varepsilon$%
-bias induced by Algorithm \ref{alg2}. Let $T$ be any positive random variable with
positive density $\{f(t)\dvtx t\geq0\}$ independent of $\mathbf{Y}^*(0)$.
Let $g\dvtx %
\mathbb{R}^d\to\mathbb{R}$ be any positive Lipschitz continuous function
such that there exists constant $K>0$ and for all $\mathbf{x}$ and
$\mathbf{y%
}\in\mathbb{R}^d$, $|g(\mathbf{x})-g(\mathbf{y})|\leq K\max_{i=1}|x_i-y_i|$%
. As illustrated in \citet{Beskosetal2012},
\begin{eqnarray*}
E \bigl[g\bigl(\mathbf{Y}^*(0)\bigr) \bigr]&=&E \biggl[\int_0^{g(\mathbf
{Y}^*(0))}\,dt
\biggr]%
=E \biggl[\int_0^{g(\mathbf{Y}^*(0))}
\frac{f(t)}{f(t)}\,dt \biggr]
\\
&=&E \biggl[\frac{1(g(\mathbf{Y}^*(0))>T)}{f(T)} \biggr].
\end{eqnarray*}
Since $|Y_{i}^{*}(0)-Y^{\varepsilon}_{\tau_{\varepsilon},i}(\tau_{\varepsilon
},0)|%
\leq(1+dr)\varepsilon$, we can sample $T$ first, and then select $\varepsilon>0$
small enough, output $1(g(\mathbf{Y}^{\varepsilon}_{\tau_{\varepsilon}}(\tau_{
\varepsilon},0))>T)/f(T)$ as an unbiased estimator of $E[g(\mathbf{Y}^*(0))]$
without the need for computing $\mathbf{Y}^*(0)$ exactly. It is
important to
have $ (\mathbf{Y}^{\varepsilon}_{\tau_{\varepsilon}}(\tau_{\varepsilon},0)\dvtx %
\varepsilon>0 ) $ coupled as $\varepsilon\to0$, and this can be achieved
thanks to the wavelet construction that we will discuss next.

\subsection{Wavelet representation of Brownian motion}
\label{SubWave}

In this part, we give an algorithm to generate piecewise linear
approximations to a Brownian motion path-by-path, with uniform
precision on
any finite time interval. The main idea is to use a wavelet representation
for Brownian motion.

By the Cholesky decomposition, any multidimensional Brownian motion can be
expressed as a linear combination of independent one-dimensional Brownian
motions. Our goal is to give a piecewise linear approximation to a $d$%
-dimensional Brownian motion $\mathbf{Z}$ with uniform precision
$\varepsilon$
on $[0,1]$. Suppose that we can write $\mathbf{Z}=A\mathbf{B}$, where
$A$ is
the Cholesky decomposition of the covariance matrix, and the $B_{i}$'s are
independent standard Brownian motions. If we are able to give a piecewise
linear approximation $\tilde{B}_{i}$ to each $B_{i}$ on $[0,1]$ with
precision $\varepsilon/(d\cdot a)$ where $a=\max_{i,j}|A_{ij}|$, then
$A\tilde{%
\mathbf{B}}$ is a piecewise linear approximation to $\mathbf{Z}$ with
uniform error~$\varepsilon$. Therefore, in the rest of this part, we only need
to work with a standard one-dimensional Brownian motion.

Now let us introduce the precise statement of a wavelet representation of
Brownian motion; see \citet{Steele2001}, pages 34--39. First we need to define
step function $H(\cdot)$ on $[0,1]$ by
\[
H(t)=%
\cases{1, &\quad $\mbox{for }0\leq t<\tfrac{1}{2},$ \vspace*{2pt}
\cr
-1, &\quad $\mbox{for }\tfrac{1}{2}\leq t\leq1,$ \vspace*{2pt}
\cr
0, & \quad $\mbox{otherwise.}$}
\]
Then define a family of functions
\[
H_{k}(t)=2^{j/2}H\bigl(2^{j}t-l\bigr)
\]
for $k=2^{j}+l$ where $j>0$ and $0\leq l\leq2^{j}$. Set $H_{0}(t)=1$. The
following wavelet representation theorem can be seen in \citet{Steele2001}:

\begin{theorem}\label{th3}
If $\{W^{k}\dvtx 0\leq k<\infty\}$ is a sequence of independent standard normal
random variables, then the series defined by
\[
B_{t}=\sum_{k=0}^{\infty} \biggl(
W^{k}\int_{0}^{t}H_{k}(s)
\,ds \biggr)
\]
converges uniformly on $[0,1]$ with probability one. Moreover, the
process $%
\{B_{t}\}$ defined by the limit is a standard Brownian motion on $[0,1]$.
\end{theorem}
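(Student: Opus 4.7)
The plan is to prove the two assertions separately: first the almost sure uniform convergence of the series on $[0,1]$, and then that the resulting process is a standard Brownian motion. Throughout, write $S_k(t) = \int_0^t H_k(s)\,ds$ for the Schauder (tent-shaped) functions.

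First, I would handle the uniform convergence. For each level $j\geq 0$ and index $n=2^j+l$ with $0\leq l<2^j$, the function $S_n$ is supported on the dyadic interval $[l\,2^{-j},(l+1)2^{-j}]$ and attains its maximum $\tfrac12\cdot 2^{-j/2}$ at the midpoint. Crucially, for fixed $j$ the supports of $S_{2^j},\ldots,S_{2^{j+1}-1}$ are disjoint, so for every $t\in[0,1]$ at most one term in the level-$j$ block is nonzero, giving the uniform bound
\[
\sup_{t\in[0,1]}\Bigl|\sum_{l=0}^{2^j-1} W^{2^j+l}\,S_{2^j+l}(t)\Bigr|\;\leq\;\tfrac12\cdot 2^{-j/2}\,\max_{0\leq l<2^j}|W^{2^j+l}|.
\]
Using the standard Gaussian tail bound $P(|W|>x)\leq 2e^{-x^2/2}$ together with a union bound,
\[
P\bigl(\max_{0\leq l<2^j}|W^{2^j+l}|>c\sqrt{j}\bigr)\;\leq\;2^{j+1}e^{-c^2 j/2},
\]
which is summable in $j$ for any $c>\sqrt{2\log 2}$. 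By Borel--Cantelli, almost surely there is a random $J$ such that for all $j\geq J$ the level-$j$ block is bounded uniformly in $t$ by $C\sqrt{j}\,2^{-j/2}$, and this is summable in $j$. Hence the partial sums form a uniformly Cauchy sequence of continuous functions on $[0,1]$, and so they converge uniformly to a continuous limit $\{B_t\}$ almost surely.

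Next I would verify that $\{B_t\}$ is a standard Brownian motion by checking continuity, mean zero, Gaussianity of finite-dimensional distributions, and the covariance $E[B_s B_t]=s\wedge t$. Continuity is immediate from uniform convergence. Each partial sum is a finite linear combination of independent Gaussians, hence jointly Gaussian; the limiting finite-dimensional distributions are therefore Gaussian as well (convergence in distribution of Gaussians is preserved). Mean zero is clear since each $W^k$ has mean zero. The covariance computation is the content of the argument: since $\{H_k\}_{k\geq 0}$ is a complete orthonormal system in $L^2[0,1]$ (the Haar basis), Parseval's identity applied to the indicator functions $\mathbf{1}_{[0,s]}$ and $\mathbf{1}_{[0,t]}$ yields
\[
s\wedge t\;=\;\langle \mathbf{1}_{[0,s]},\mathbf{1}_{[0,t]}\rangle\;=\;\sum_{k=0}^{\infty}\langle \mathbf{1}_{[0,s]},H_k\rangle\,\langle \mathbf{1}_{[0,t]},H_k\rangle\;=\;\sum_{k=0}^{\infty}S_k(s)\,S_k(t).
\]
On the other hand, by independence of the $W^k$ and bounded convergence applied to the partial sums (whose $L^2$ norms are controlled by $\sum_k S_k(t)^2 = t$), one gets $E[B_s B_t]=\sum_k S_k(s)S_k(t)$, which matches. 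So the limiting process has the covariance of Brownian motion, Gaussian finite-dimensional distributions, and continuous paths, hence it is standard Brownian motion on $[0,1]$.

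The step I expect to be the main obstacle, or at least the one requiring the most care, is the uniform convergence argument: one has to exploit that the Schauder functions within a single level have disjoint supports so that the level-$j$ tail is controlled by the \emph{maximum} (not the sum) of $2^j$ Gaussians, and then combine this with the Borel--Cantelli estimate to get almost sure summability of the level-wise sup norms. The completeness of the Haar system in $L^2[0,1]$ and the resulting Parseval identity are classical and can be invoked directly.
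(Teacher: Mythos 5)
Your proof is correct and is essentially the standard L\'evy--Ciesielski argument that the paper itself does not reproduce but simply cites from Steele (2001): within each dyadic level the Schauder functions have disjoint supports, so the level-$j$ sup norm is controlled by the maximum of $2^j$ Gaussians, and Gaussian tails plus Borel--Cantelli give almost sure uniform convergence, after which Gaussianity of limits and Parseval for the Haar basis identify the covariance $s\wedge t$. The only point worth flagging is that your bound $\tfrac12\,2^{-j/2}$ for the tent functions (and the Parseval step) presupposes the orthonormal normalization $H_n(t)=2^{j/2}H(2^jt-l)$ as in Steele; the exponent $2^{-j/2}$ displayed in the paper's definition is evidently a typo, and your reading is the correct one.
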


Choose $\eta_{k}=4\cdot\sqrt{\log k}$, and note that $P(|W^{k}|>\eta
_{k})=O(k^{-4})$, so\break $\sum_{k=0}^{\infty}P(|W^{k}|>\eta_{k})<\infty$.
Therefore, $P(|W^{k}|>\eta_{k},\mathrm{i.o.})=0$. The simulation strategy will
be to
sample $\{W^{k}\}$ jointly with the finite set $\{k\dvtx |W^{k}|\geq\eta
_{k}\}$.

Note that if we take $j=\lceil\log_{2}{k}\rceil$, as shown in %
\citet{Steele2001},
\[
\sum_{k=1}^{\infty} \biggl( W^{k}
\int_{0}^{t}H_{k}(s) \,ds \biggr) \leq
\sum_{j=0}^{\infty} \Bigl(2^{-j/2}\cdot
\max_{2^{j}\leq k\leq2^{j+1}-1} \bigl|W^{k}\bigr| \Bigr).
\]
Since $\sum_{j=0}2^{-j/2}\sqrt{j+1}<\infty$, for any $\varepsilon>0$ there
exists $K_{0}>0$, such that
%
\begin{equation}
\sum_{j=\lceil\log{K_{0}}\rceil}2^{-j/2}\sqrt{j+1}<\varepsilon.
\label{definition of K_0}
\end{equation}
As a result, define
%
\begin{equation}
K=\max\bigl\{k\dvtx \bigl|W^{k}\bigr|>\eta_{k}\bigr\}\vee
K_{0}<\infty, \label{definiation of K}
\end{equation}
then $\sum_{k=K+1}^{\infty}|W^{k}|\int_{0}^{t}H_{k}(s)\,ds\leq\varepsilon$.
If we
can simulate $\{(W^{k})_{k=1}^{K},K\}$ jointly,
%
\begin{equation}
B^{\varepsilon}(t)=\sum_{k=0}^{K}W^{k}
\int_{0}^{t}H_{k}(s)\,ds \label{EQ-BE}
\end{equation}
will be a piecewise linear approximation to a standard Brownian motion
within precision $\varepsilon$ in $C[0,1]$.

Now we show how to simulate $K$ jointly with $\{W^k\dvtx 1\leq k\leq K\}$. The
algorithm is as below with $\rho=4$ as we have chosen $\eta_k=4\cdot
\sqrt{%
\log k}$:

\renewcommand{\thealgorithm}{2\normalfont{w}}
\begin{algorithm}[(Simulate $K$ jointly with $\{W^k\}$)]\label{alg2w}

\textit{Step} 0: Initialize $G=K_0$ and $S$ to be an empty array.

\textit{Step} 1: Set $U=1$, $D=0$. Simulate $V\sim \operatorname{Uniform}(0,1)$.

\textit{Step} 2: While $U>V>D$, set $G\leftarrow G+1$ and $U\leftarrow
P(|W^{G}|\leq
\rho\sqrt{\log{G}})\times U$ and $D\leftarrow$ $(1-G^{1-\rho
^{2}/2})\times U$.

\textit{Step} 3: If $V\geq U$, add $G$ to the end of $S$, that is, $S=[S,G]$,
and return
to step~1.

\textit{Step} 4: If $V\leq D$, $K=\max(S,K_0)$.

\textit{Step} 5: For every $k\in S$, generate $W^{k}$ according to the conditional
distribution of $Z$ given $\{|W|>\rho\sqrt{\log{k}}\}$; for other
$1\leq
k\leq K$, generate $W^{k}$ according to the conditional distribution of $W$
given $\{|W|\leq\rho\sqrt{\log{k}}\}$.
\end{algorithm}

In this algorithm, we keep an array $S$, which is used to record the indices
such that $|W^{k}|>\rho\sqrt{\log k}$, and a number $G$ which is the next
index to be added into $S$. Precisely speaking, given that the last element
in array $S$ is $N$, say, $\max(S)=N$, $G=\inf\{k\geq N+1\dvtx |W^{k}|>\rho
\sqrt{\log k}\}$. The key part of the algorithm is to simulate a Bernoulli
with success parameter $P(G<\infty)$ and to sample $G$ given $G<\infty$.

For this purpose, we keep updating two constants $U$ and $D$ such that $
U>P(G=\infty)>D$ and $(U-D)\to0$ as the number of iterations grows. To
illustrate this point, denote the value of $U$ and $D$ in the $m$th
iteration by $U_m$ and $D_m$, respectively. Then for all $m>0$,
\[
P(G=\infty)=\prod_{k=N+1}^{\infty}P
\bigl(\bigl|W^k\bigr|\leq\rho\sqrt{\log{k}}%
\bigr)<\prod
_{k=N+1}^{N+m}P\bigl(\bigl|W^k\bigr|\leq\rho\sqrt{
\log{k}}\bigr)=U_m.
\]
On the other hand, for all $\rho>\sqrt{2}$ and $N$ large enough,
\begin{eqnarray*}
\prod_{k=N+m+1}^{\infty}P\bigl(\bigl|W^k\bigr|
&\leq&\rho\sqrt{\log{k}}\bigr)>1-\sum_{k=N+m+1}^{%
\infty}P
\bigl(\bigl|W^k\bigr|>\rho\sqrt{\log k}\bigr)\\
&\geq&1-(N+m+1)^{1-\rho^2/2},
\end{eqnarray*}
and hence we conclude that $D_m=(1-(N+m+1)^{1-\rho
^{2}/2})U_m<P(G=\infty)$.
Because $(1-(N+m+1)^{1-\rho^2/2})\to1$ as $m\to\infty$, the algorithm
proceeds to steps 3 or 4 after a finite number of iterations, and
we can
decide whether $G<\infty$ or not.

Now we show that we can actually sample $G$ simultaneously as the\break Bernoulli
with success probability $P(G<\infty)$ is generated. If $V<D$, we conclude
that $V<P(G=\infty)$ and hence $G=\infty$ and $K=\max(S)$. Otherwise, we
have $G<\infty$. In this case, suppose step 2 ends in the $(m+1)$th
iteration and $V>U$. Since $U_m=P(|W^k|\leq\rho\sqrt{\log k}\mbox{ for
}%
k=K+1,\ldots,K+m )$, $U_{m+1}\leq V<U_m$ implies nothing but that $%
K+m+1=\inf\{k\geq K+1\dvtx |W^{k}|>\rho\sqrt{\log k}\}$. Therefore, by
definition, $G=K+m+1$ and should be added into array $S$. Once $S$ and $K$
are generated, $\{W^k\dvtx 1\leq k\leq K\}$ can be generated jointly with
$S$ and
$K$ according to step 5.

Also we note that $B^{\varepsilon}(t)$ has the following nice property:

\begin{proposition}\label{pr6}
\[
B^{\varepsilon}(1)=B(1).
\]
\end{proposition}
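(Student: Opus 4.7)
The plan is to exploit the fact that every Haar wavelet $H_k$ with $k\geq 1$ integrates to zero on $[0,1]$, so that only the $k=0$ term contributes to the value of either series at $t=1$.

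First I would recall the definitions: $H_0(t)=1$, and for $k\geq 1$ we write $k=2^j+l$ with $j\geq 0$ and $0\leq l<2^j$, and then $H_k(t)=2^{-j/2}H(2^j t-l)$, where $H$ equals $+1$ on $[0,1/2)$, $-1$ on $[1/2,1]$, and $0$ elsewhere. The support of $H_k$ is the dyadic interval $[l/2^j,(l+1)/2^j]\subseteq[0,1]$, and on this interval the function is $+2^{-j/2}$ on the left half and $-2^{-j/2}$ on the right half. A direct change of variables therefore gives
\begin{equation*}
\int_0^1 H_k(s)\,ds = 2^{-j/2}\int_{l/2^j}^{(l+1)/2^j} H(2^j s-l)\,ds = 2^{-3j/2}\int_0^1 H(u)\,du = 0,
\end{equation*}
for every $k\geq 1$, while $\int_0^1 H_0(s)\,ds=1$.

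Substituting $t=1$ into the wavelet representation of Brownian motion then yields
\begin{equation*}
B(1)=\sum_{k=0}^{\infty} W^k\int_0^1 H_k(s)\,ds = W^0,
\end{equation*}
and similarly, since the truncated approximation $B^{\epsilon}$ is defined by the finite sum in (\ref{EQ-BE}),
\begin{equation*}
B^{\epsilon}(1)=\sum_{k=0}^{M} W^k\int_0^1 H_k(s)\,ds = W^0,
\end{equation*}
which gives the claimed identity $B^{\epsilon}(1)=B(1)$.

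There is essentially no obstacle here: the only thing to verify carefully is the vanishing of $\int_0^1 H_k(s)\,ds$ for $k\geq 1$, which is immediate from the piecewise-constant $\pm 1$ structure of $H$ on $[0,1]$ and a dyadic change of variables. The uniform convergence on $[0,1]$ granted by the wavelet representation theorem ensures that we may evaluate the infinite series pointwise at $t=1$, so the interchange of limit and evaluation used in the first display is justified without further argument.
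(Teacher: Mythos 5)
Your proof is correct and follows the same route as the paper, whose one-line argument also rests on the fact that $\int_0^1 H_k(s)\,ds=0$ for every $k\geq 1$, so only the $W^0$ term survives at $t=1$ in both the full series and the truncated sum. You merely spell out the dyadic change of variables and the pointwise evaluation of the uniformly convergent series, which the paper leaves implicit.
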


\begin{pf}
The equality follows from the fact that $\int_{0}^{1}
H_{n}(s)\,ds=0$ for any $n\geq1$ and $m\geq1$.
\end{pf}

As a consequence of this property, for any compact time interval $[0,T]$
(without loss of generality, assume $T$ is an integer), in order to
give an
approximation for $B(t)$ on $[0,T]$ with guaranteed $\varepsilon$ precision
uniformly in $[0,T]$, we only need to run the above algorithm $T$ times to
get $T$ i.i.d. sample paths $\{B^{\varepsilon,(i)}(t)\dvtx t\in [0,1]\}$ for $%
i=1,2,\ldots,T$, and define recursively
\[
B^{\varepsilon}(t)=\sum_{i=1}^{\lfloor t\rfloor}B^{\varepsilon
,(i)}(1)+B_{\lfloor
t\rfloor}^{\varepsilon}
\bigl(t-\lfloor t\rfloor\bigr).
\]

\subsection{A conceptual framework for the joint simulation of \texorpdfstring{$\tau_{\varepsilon}$}{tauvarepsilon} and 
\texorpdfstring{$\mathbf{Z}^{\varepsilon}$}{Zvarepsilon}}\label{sec4.3}

Our goal now is to develop an algorithm for simulating $\tau_{\varepsilon
}$ and
$ ( \mathbf{Z}^{\varepsilon}(t)\dvtx 0\leq t\leq\tau_{\varepsilon} ) $
jointly. In detail, we want to simulate $\mathbf{Z}^{\varepsilon}(t)$ forward
in time and stop at a random time $\tau_{\varepsilon}$ such that for any
time $%
s>\tau_{\varepsilon}$, $Z_{i}(s)\leq Z_{i}(\tau_{\varepsilon})+\varepsilon$ for $
1\leq i\leq d$.

Because of the special structure of the wavelet representation used in
simulating the process $\mathbf{Z}^\varepsilon(\cdot)$, the time $%
T_m\triangleq\inf\{t\geq0\dvtx Z^{\varepsilon}_i(t)>m\mbox{ for some }1\leq
i\leq
d\}$ is no longer a stopping time with respect to the filtration generated
by $\mathbf{Z}(\cdot)$. As a consequence, we cannot directly carry out
importance sampling as in Algorithm~\ref{alg1.1.1}. To remedy this problem, we
decompose the process $\mathbf{Z}^{\varepsilon}(t)$ into two parts: a
random walk $\{\mathbf{Z}^{\varepsilon}(n)\dvtx n\geq0\}$ with Gaussian increment
and a series of independent Brownian bridges $\{\bar{\mathbf{B}}%
_n(s)\triangleq\mathbf{Z}^{\varepsilon}(n+s)-\mathbf{Z}^{\varepsilon}(n)\dvtx s\in
[0,1]%
,n\geq0\}$. Our strategy is to first carry out the importance sampling
as in
Algorithm \ref{alg1.1.1} to the random walk $\{\mathbf{Z}^{\varepsilon}(n)\dvtx n\geq0\}
$ to
find its upper bound, and next develop a new scheme to control the upper
bounds attained in the intervals $\{(n,n+1)\dvtx n\geq0\}$ for the i.i.d.
Brownian bridges $\{\bar{\mathbf{B}}_n(s)\dvtx s\in[0,1],n\geq0\}$.

The whole procedure is based on the wavelet representation of Brownian
motion. Let $\{W_{n}^{k}(i)\dvtx n,k\in\mathbb{N}, i=1,2,\ldots,d\}$ be a sequence
of i.i.d. standard normal random variables. According to the expression
given in Theorem~\ref{th3}, for any $t=n+s$, $s\in[0,1]$,
%
\begin{eqnarray}\label{wv}
Z_{i}(t)&=&Z_{i}(n)+s\bigl(Z_i(n+1)-Z_i(n)
\bigr)
\nonumber
\\[-8pt]
\\[-8pt]
\nonumber
&&{}+\sum_{j=1}^{d} A_{ij}
\Biggl(\sum_{k=1}^{\infty}W_{n}^{k}(j)
\int_{0}^{s} H_{k}(u)\,du\Biggr).
\end{eqnarray}
Let us put (\ref{wv}) in matrix form,
\[
\mathbf{Z}(t)=\mathbf{Z}(n)+s\bigl(\mathbf{Z}(n+1)-\mathbf{Z}(n)\bigr)+A\sum
_{k=1}^{%
\infty}\mathbf{W}_{n}^{k}
\cdot\int_{0}^{s} H_{k}(u)\,du.
\]
For all $n\geq0$ and $s\in[0,1]$, $\bar{\mathbf{B}}_{n}(s)=
A\sum_{k=1}^{\infty}\mathbf{W}_{n}^{k}\cdot\int_{0}^{s}
H_{k}(u)\,du$. Then the sequence $\{\bar{\mathbf{B}}_{n}(\cdot)\dvtx n\geq0\}$
is i.i.d.
Note that $(Z_i(n+1)-Z_i(n))$ is independent of $\{W_{n}^{k}(i)\dvtx k\geq1\}$.
We can split the simulation into two independent parts:
\begin{longlist}[(1)]
\item[(1)] Simulate the discrete-time random walk $\{\mathbf
{Z}(n)\dvtx n\geq0\}$
with i.i.d. Gaussian increments and $\mathbf{Z}(0)=0$. That is, $Z_{i}(0)=0$
and $Z_i(n+1)=Z_i(n)+\sum_{j=1}^{d} A_{ij}W_{n+1}^{0}(j)-\mu_i$, where
$%
\{W_n^0(j)\dvtx n\geq0\}$ are i.i.d. standard normals.

\item[(2)] For each $n$, simulate $\bar{\mathbf{B}}_{n}(s)$ to do bridging
between $\mathbf{Z}(n)$ and $\mathbf{Z}(n+1)$.
\end{longlist}

Now, any time $t_0>0$ is an approximate coalescence time $\tau_\varepsilon
$ if
there exists some positive constant $\zeta>0$ such that the following two
conditions hold for all $n\geq t_0$: Condition (1), $\mathbf{Z}(n)\leq
\mathbf{Z}(t_0)-\zeta(n-\lceil t_0\rceil)\mathbf{1}+\bolds{\varepsilon
}$, and
condition (2), $\max\{\bar{\mathbf{B}}_{n}(s)\dvtx s\in[0,1]\}\leq\zeta
(n-\lceil
t_0\rceil)\mathbf{1}$. Based on these observations, we develop an algorithm
to simulate the approximate coalescence time $\tau_{\varepsilon}$ jointly
with $%
\{ \mathbf{Z}^{\varepsilon}(t)\dvtx 0\leq t\leq\tau_{\varepsilon}\} $.

By Assumption \ref{assD}, $\mu_{i}>\delta_{0}$ for some $\delta_{0}>0 $. Let $%
\zeta=\delta_{0}/2$, and define $\mathbf{S}(n)=\mathbf{Z}(n)+n\bolds
{\zeta}%
\mathbf{1}$ such that $\{\mathbf{S}(n)\dvtx n\geq0\}$ is a random walk with
strictly negative drift. Therefore, condition (1) can be checked by carrying
out the importance sampling procedure as in Algorithm \ref{alg1.1.1} for the random
walk $\{\mathbf{S}(n)\dvtx n\geq0\}$. More precisely, since $S_i(n)$ has
Gaussian increments, we can compute explicitly that $\theta^*_i=2(\mu_i-
\zeta)/\sigma_i$ and choose $m>0$ satisfying (\ref{L1}) in order to carry
out the importance sampling procedure for the random walk $\{\mathbf{S}%
(n)\dvtx n\geq0\}$. Suppose we use the importance sampling procedure and
find $%
t_0$ such that $\mathbf{S}(n)\leq\mathbf{S}(t_0)$ for all $n\geq t_0$, and
hence condition (1) is satisfied for $t_0$.

About condition (2), recall that $\bar{\mathbf{B}}_{n}(\cdot)$'s are i.i.d.
linear combinations of Brownian bridges, and let $M$ be a random time, finite
almost surely, such that
%
\begin{equation}
M\geq\max\Bigl\{n\geq t_0\dvtx \max_{0\leq s\leq1}\bigl(
\bar{B}_{n,i}(s)-\zeta (n-t_0)\bigr)>0%
\mbox{ for
some }i\Bigr\}. \label{Def_M}
\end{equation}

Observe that for $t_0$ to be an approximate coalescence time, conditions (1)
and~(2) must hold simultaneously. If for time $t_0$, for example,
condition~(1) is satisfied while condition (2) is not, we need to
continue the
testing procedure and simulation of the process for $t>t_0$. Then, however,
the random walk $\{\mathbf{S}(n)\dvtx n\geq\lceil t_0\rceil\}$ should be
conditioned on that $\mathbf{S}(n)\leq\mathbf{S}(t_0)$ for the fact that
condition (1) holds for $t_0$ reveals ``additional information'' on the random
walk for $n\geq t_0$. Therefore, such ``additional information'' or
``conditioning event'' must be incorporated and tracked when conditions
(1) and
(2) are sequentially tested. All of these conditioning events are described
and accounted for in Section~\ref{sec4.3.2}, which also includes the overall
procedure to sample $\tau_{\varepsilon}$ jointly with $\mathbf{Z}^{\varepsilon}$.

Now, let us first provide a precise description of $M$ and explain the
simulation algorithm for $M$ in Section~\ref{sec4.3.1}.

\subsubsection{Simulating $M$ and \texorpdfstring{$\{\bar{\mathbf{B}}^{\protect\varepsilon}_{n}(\cdot)\dvtx 1\leq n\leq M\}$}
{\{bar{{B}}{varepsilon}{n}(cdot):1<=n<=M\}}}\label{sec4.3.1}

Recall that $\bar{\mathbf{B}}_{n}(t)= \break  A\sum_{k=1}^{\infty}\mathbf{W}%
_{n}^{k}\cdot\int_{0}^{t} H_{k}(u)\,du$, where $\{W_{n}^{k}(i)\dvtx n\geq
0,k\geq
1, 1\leq i\leq d\}$ are i.i.d. standard normals. Note that
\[
\sum_{n=0}\sum_{k=1} P
\bigl(\bigl|W_{n}^{k}(i)\bigr|\geq4\sqrt{\log(n+1)}+4\sqrt {\log
k%
} \bigr) \leq\sum_{n=0} \sum
_{k=1}\frac{1}{((n+1)k)^{4}} <\infty.
\]

By the Borel--Cantelli lemma, we can conclude that for each $i\in\{
1,\ldots,d\}$
there exists $M^{i}<\infty$ such that for all $(n+1)k>M^{i} $, $%
|W_{n}^{k}(i)|\leq4\sqrt{\log(n+1)}+4\sqrt{\log k}$. Clearly, $\sqrt
{\log t}%
=o(t)$ as $t\rightarrow\infty$, so we can select a $m_{0}$ large enough such
that for any $n>m_{0}$,
\[
(n+1)\zeta-ad\Biggl(4\sqrt{\log(n+1)}-\sum_{j=1}^{\infty}2^{-j}
\sqrt{j}\Biggr)\geq0.
\]
Note that $M^i$ can be simulated jointly with $(W_n^k(i)\dvtx n\geq0,k\geq1,
1\leq i\leq d, (n+1)k\leq M^i)$ by adapting Algorithm \ref{alg2w} in Section~\ref{SubWave} and $M^i$'s are independent of each other. Then, for any $%
n>\max_{i=1}^{d}M^{i}\vee m_{0}$,
\begin{eqnarray*}
\bar{\mathbf{B}}_{n}(t) & =&A\sum_{k=1}^{\infty}
\mathbf{W}_{n}^{k}\cdot \int_{0}^{t}H_{k}(u)\,du
\\
& \leq& ad\Biggl(4\sqrt{\log(n+1)}+\sum_{j=1}^{\infty}2^{-j/2}
\sqrt{j}\Biggr)\leq (n+1)\zeta,
\end{eqnarray*}
where, $j=\lceil\log_{2}{k}\rceil$. Therefore, we can choose $%
M=\max_{i}M^{i}\vee m_{0}$.

Now we introduce a variation of Algorithm \ref{alg2w} that will be used in the
procedure to simulate $M$ and $\{\bar{B}_n^{\varepsilon}(\cdot)\dvtx 1\leq
n\leq M\}$
jointly. In the following algorithm, a~sequence of ``conditioning
events'' of
the form $|W^k|\leq\beta_k$, for some given constants $\{\beta^k\dvtx \beta
^k>4%
\sqrt{\log k}\}$, is in force. Let $\Phi(a)=P(|W|<a)$ for all $a>0$,
where $W
$ is a standard normal. The random number $K$ to be simulated is
defined as in~(%
\ref{definiation of K}).

\renewcommand{\thealgorithm}{2\normalfont{w}$'$}
\begin{algorithm}[(Simulate $K$ jointly with $\{W^k\dvtx 1\leq k\leq K\}$
conditional on $|W^k|\leq\beta^k$ for all $k\geq1$)]\label{alg2w'}

\textit{Step} 0: Initialize $G=K_0$ as defined in (\ref{definition of K_0}) and $S$
to be an empty array.

\textit{Step} 1: Set $U=1$, $D=0$. Simulate $V\sim \operatorname{Uniform}(0,1)$.

\textit{Step} 2: While $U>V>D$, set $G\leftarrow G+1$ and $U\leftarrow\frac{\Phi
(4%
\sqrt{\log G})}{\Phi(\beta^k)}\times U$ and $D\leftarrow$
$(1-G^{-7})\times
U$.

\textit{Step} 3: If $V\geq U$, add $G$ to the end of $S$, that is, $S=[S,G]$,
and return
to step~1.

\textit{Step} 4: If $V\leq D$, $K=\max(S,K_0)$.

\textit{Step} 5: For every $k\in S$, generate $W^{k}$ according to the conditional
distribution of $Z$ given $\{4\sqrt{\log{k}}<|W|\leq\beta^k\}$; for
other $%
1\leq k\leq K$, generate $W^{k}$ according to the conditional distribution
of $W$ given $\{|W|\leq4\sqrt{\log{k}}\}$.
\end{algorithm}

The main difference between Algorithm \ref{alg2w'} and the original Algorithm
\ref{alg2w} is
that $U$ and $V$ are now computed from the conditional probability; however,
the relations $U>V>D$ and $U-D\to0$ still hold, and hence Algorithm \ref{alg2w'}
is valid. Based on this, we can now give the main procedure to simulate $M$
and $\{\bar{B}_n^{\varepsilon}(\cdot)\dvtx 1\leq n\leq M\}$ jointly:

\renewcommand{\thealgorithm}{2\normalfont{m}}
\begin{algorithm}[(Simulating of $M$ and $\{\bar{\mathbf{B}}%
_{n}^{\varepsilon}(\cdot)\dvtx 1\leq n\leq M\}$ jointly)]\label{alg2m}
%
\begin{longlist}[(1)]
\item[(1)] For each index $i$, simulate $M^{i}$ and $(W_{n}^{k}(i)\dvtx n\geq0,
k\geq1, nk<M)$. Compute $M=\max_{i} M^{i}\vee m_{0}$. (As discussed
earlier, $%
M^i$'s are simulated by adapting Algorithm \ref{alg2w}.)

\item[(2)] For each $0\leq n\leq M$ and each index $i$, $\{
W_{n}^{k}(i)\dvtx k<
M^i/n\}$ are already given in step 1. For $k\geq M^i/n$, use
Algorithm \ref{alg2w'}
to simulate $K_n^i$ jointly with $\{W_n^k(i)\dvtx M^i/n\leq k\leq K\}$
conditional on $|W_n^k(i)|\leq4(\sqrt{\log(n+1)}+\sqrt{\log
k})\triangleq
\beta^k>4\sqrt{\log k}$.

\item[(3)] For any $0\leq n\leq M$, compute and output
%
\begin{equation}
\bar{B}_{n,i}^{\varepsilon}(t)= \sum_{i=1}^{d}
A_{ij}\Biggl(\sum_{k=1}^{K_n^{i}}
W_{n}^{k}(i)\int_{0}^{t}
H_{k}(u)\,du\Biggr). \label{bridge}
\end{equation}
\end{longlist}
\end{algorithm}

In step 1 of Algorithm \ref{alg2m}, we can use a similar procedure as in
Algorithm~\ref{alg2w'}
to impose conditioning events of form $|W_n^k(i)|\leq\beta_n^k(i)$ while
simulating $M_i$'s jointly with $W_n^k(i)$'s. In this way, we derive an
algorithm that is able to simulate $M$ jointly with $\{\bar{\mathbf{B}}%
_{n}^{\varepsilon}(\cdot)\dvtx 1\leq n\leq M\}$ conditional on $|W_n^k(i)|\leq
\beta_n^k(i)$ for all $n\geq0$, $k\geq1$ and $1\leq i\leq d$ for any given
sequence of $\{\beta_n^k(i)\}$ such that $\beta_n^k(i)>4(\sqrt{\log
(n+1)}+%
\sqrt{\log k})$.

\renewcommand{\thealgorithm}{2\normalfont{m}$'$}
\begin{algorithm}[(Simulating of $M$ and $\{\bar{\mathbf{B}}%
_{n}^{\varepsilon}(\cdot)\dvtx 1\leq n\leq M\}$ jointly conditional on $%
|W_n^k(i)|\leq\beta_n^k(i)$ for all $n\geq0$, $k\geq1$ and $1\leq
i\leq d$)]\label{alg2m'}
%
\begin{longlist}[(1)]
\item[(1)] For each index $i$, simulate $M_{i}$ and $(W_{n}^{k}(i)\dvtx n\geq0,
k\geq1, nk<M)$ conditional on $|W_n^k(i)|\leq\beta_n^k(i)$ using a similar
procedure as in Algorithm \ref{alg2w'}. Compute $M=\max_{i} M^{i}\vee m_{0}$.

\item[(2)] For each $0\leq n\leq M$ and each index $i$, $\{
W_{n}^{k}(i)\dvtx k<
M^i/n\}$ are already given in step 1. For $k\geq M^i/n$, use Algorithm \ref{alg2w'}
to simulate $K_n^i$ jointly with $\{W_n^k(i)\dvtx M^i/n\leq k\leq K\}$
conditional on $|W_n^k(i)|\leq4(\sqrt{\log(n+1)}+\sqrt{\log k})$. [Note
that $\beta_n^k(i)>4(\sqrt{\log(n+1)}+\sqrt{\log k})>4\sqrt{\log k}$, and
hence this step is well defined.]

\item[(3)] For any $0\leq n\leq M$, compute and output
\[
\bar{B}_{n,i}^{\varepsilon}(t)= \sum_{i=1}^{d}
A_{ij}\Biggl(\sum_{k=1}^{K_n^{i}}
W_{n}^{k}(i)\int_{0}^{t}
H_{k}(u)\,du\Biggr).
\]
\end{longlist}
\end{algorithm}

Algorithm \ref{alg2m'} will be used in the next section in order to keep track of
``conditioning events'' corresponding to condition (2).

\subsubsection{Keeping track of the conditioning events}\label{sec4.3.2}

As we have discussed just prior to the beginning of Section~\ref{sec4.3.1}, we need
to keep track of several conditioning events introduced by conditions
(1) and
(2). First, let us explain how to deal with the conditioning event
corresponding to condition (1). These conditioning events involve only the
random walk $\mathbf{S}(\cdot)$. Now we split $\mathbf{S}(\cdot)$ according
to the sequences of $\{\Gamma_{l}\dvtx l\geq1\}$ and $\{\Delta_{l}\dvtx l\geq1\}$ of
random times defined as follows:
\begin{longlist}[(1)]
\item[(1)] Set $\Delta_{1}=\min\{n\dvtx S_{i}(n)\leq-2m\mbox{ for every }i\}$.

\item[(2)] Define $\Gamma_{l}=\min\{n\geq\Delta
_{l}\dvtx S_{i}(n)>S_{i}(\Delta_{l})+m%
\mbox{ for some }i\}$.

\item[(3)] Put $\Delta_{l+1}=\min\{n\geq\Gamma_{l} I(\Gamma_{l}<\infty
)\vee
\Delta_{l}\dvtx S_{i}(n)<S_{i}( \Delta_{l})-2m\mbox{ for every }i\}$.
\end{longlist}

\begin{figure}

\includegraphics{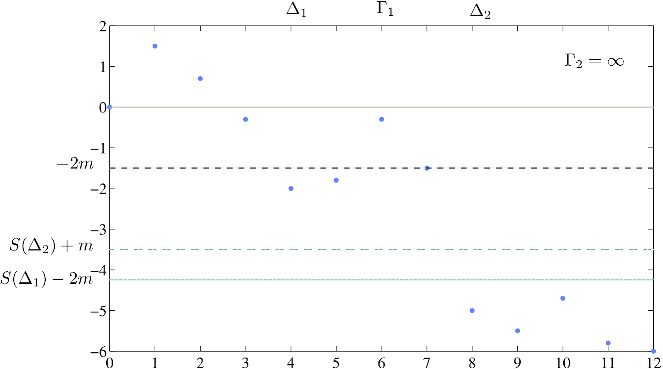}

\caption{Illustration for the random times $\{\Delta_n\}$ and $\{\Gamma_n\}$.}\label{fig1}
\end{figure}


Figure~\ref{fig1} illustrates a sample path of the random walk with the sequence of
random times $\{\Gamma_{l}\dvtx l\geq1\}$ and $\{\Delta_{l}\dvtx l\geq1\}$ in one
dimension. The message is that the joint simulation of $\{\mathbf{S}%
(n)\dvtx n\geq0\}$ with $\{\Gamma_{l}\dvtx l\geq1\}$ and $\{\Delta_{l}\dvtx l\geq1\}$
allows us to keep track of the process $\{\max_{m\geq n}\mathbf
{S}(m)\dvtx n\geq
0\}$, which includes the ``additional information'' introduced by condition
(1). The main steps in the simulation of $\{\mathbf{S}(n)\dvtx n\geq0\}$ jointly
with $\{\Gamma_l\dvtx l\geq1\}$ and $\{\Delta_l\dvtx l\geq1\}$ are explained in
Lemma~2 through Lemma~4 in \citet{BlanchetSigman2011}. The approach of %
\citet{BlanchetSigman2011}, which works in one dimension, could be
modified for multidimensional cases using the change-of-measure as described
in Section~\ref{sec2.3.1}.

Regarding the verification of condition (2) involving $M$ and the
Brownian bridges, as per the discussion in Section~\ref{sec4.3.1}, we just need to
keep track of certain deterministic $\beta_n^k(i)$ for each
$|W_{n}^{k}(i)|$, in order to condition on the events of the form
$|W_n^k(i)|\leq\beta_n^k(i)
$. These events are related to the sequential construction of the
random variable $M$ when testing condition (2) as described in
Section~\ref{sec4.3.1}. Now, we
can write down the integrated version of our algorithm for sampling $%
\tau_\varepsilon$ and $\{\mathbf{Z}^{\varepsilon}(t)\dvtx 0\leq t\leq\tau_{\varepsilon
}\}$
jointly.

\renewcommand{\thealgorithm}{2.1}
\begin{algorithm}[(Simulating $\tau_{\varepsilon}$ and $\{\mathbf{Z}%
^{\varepsilon}(t)\dvtx 0\leq t\leq\tau_{\varepsilon}\}$)]\label{alg2.1}

The output of this algorithm is $\{\mathbf{Z}^{\varepsilon}(t)\dvtx 0\leq
t\leq\tau_{\varepsilon}\}$, and the approximation coalescence time $%
\tau_{\varepsilon}$.
\begin{longlist}[(1)]
\item[(1)] Set $\beta_{n}^{k}(i)=\infty$ for all $n\geq1$, $k\geq1$ and
$1\leq
i\leq d $. Set $L=0$ and $\tau_{\varepsilon}=0$.

\item[(2)] Simulate $\mathbf{S}(n)$ until $\Delta_{l}$, where $l=\min\{
j\dvtx \Gamma
_{j}=\infty,\Delta_{j}>\tau_{\varepsilon}\}$. Compute $\mathbf{Z}^{\varepsilon
}(n)=%
\mathbf{S}(n)-n\bolds{\zeta}$.

\item[(3)] For each $n\in[\tau_{\varepsilon},\Delta_{l}]\cap\mathbb
{Z}_+$ and
each index $1\leq i\leq d$, compute the i.i.d. bridges $\{\bar{\mathbf
{B}%
}^\varepsilon_n(\cdot)\}$ using (\ref{bridge}), in which $K_n^i$ is\vspace*{1pt} jointly
simulated with $(W_n^k(i)\dvtx 1\leq k\leq K_n^i )$ conditional on that $%
|W_n^k(i)|\leq\beta_n^k(i)$ for all $k\geq1$ using Algorithm \ref{alg2w'}.
Given $%
\bar{\mathbf{B}}^\varepsilon_n(\cdot)$ and $\mathbf{S}(n)$ for $n\in[%
\tau_{\varepsilon},\Delta_{l}]\cap\mathbb{Z}_+$, the process $\mathbf
{Z}%
^{\varepsilon}(t)$ for $t\in[\tau_{\varepsilon},\Delta_{l}]$ can be directly
computed. If there exists some $t\geq\Gamma_{l-1}$ such that for all
$t\leq
s\leq\Delta_{l}$, $Z_{i}^{\varepsilon}(t)\geq Z^{\varepsilon}_{i}(s)-2\varepsilon$
and $Z^{\varepsilon}_{i}(t)\geq Z^{\varepsilon}_{i}(\Delta_{l})+m-2\varepsilon
$, set
$\tau_{\varepsilon}\leftarrow t$, and go to step 4. Otherwise, set $%
\tau_{\varepsilon}\leftarrow\Delta_{l}$ and return to step 2.

\item[(4)] Use Algorithm \ref{alg2m'} to simulate $M$ jointly with $(\bar{\mathbf
{B}}%
^\varepsilon_{\tau_{\varepsilon}+n}(\cdot)\dvtx 0\leq n\leq M)$ conditional on $%
|W_{\tau_{\varepsilon}+n}^{k}(i)|\leq\beta_{\tau_\varepsilon+n}^{k}(i)$ for
all $%
n\geq0$, $k\geq1$ and $1\leq i\leq d$. Update $\beta_{\tau_{%
\varepsilon}+n}^{k}(i)\leftarrow4\sqrt{\log{(n+1)}}+4\sqrt{\log{k}}$ for
all $%
n\cdot k\geq M^{i}$. Keep simulating $\mathbf{S}(n)$ until $n=\Delta_l+M$,
and compute $\{\mathbf{Z}^{\varepsilon}(t)\dvtx t\in[\Delta_l,\Delta_l+M]\}$. If
there exist some $t$ and $i$ such that $Z_{i}^{\varepsilon}(t)>Z^{\varepsilon
}_i(%
\tau_{\varepsilon})+\varepsilon$, set $\tau_{\varepsilon} \leftarrow t$ and
return to
step 2.

\item[(5)] Otherwise, stop and output $\tau_{\varepsilon}$ as the approximation
coalescence time along with $(\mathbf{Z}^{\varepsilon}(t)\dvtx 0\leq
t\leq\tau_{\varepsilon})$.
\end{longlist}
\end{algorithm}

\subsection{Computational complexity}
\label{SubSecCCRBM}

In this part, we will discuss the complexity of our algorithm when $d$ and
the other parameters $\bolds{\mu}$ and $A$ are fixed but send the precision
parameter $\varepsilon$ to 0. Denote the total number of random variables
needed by $N(\varepsilon)$ when the precision parameter for the algorithm
is $%
\varepsilon$.

According to Assumption \ref{assD}, the input process $\mathbf{Z}(t)$ equals $-
\bolds{\mu}t+A\mathbf{B}(t)$ with $\mu_{i}>\delta_0>0$. Let $\max_{i,j}
|A_{ij}|=a$. The following result shows that our algorithm's running
time is
polynomial in $1/\varepsilon$:

\begin{theorem}\label{th4}
\label{ThmMain2}Under Assumption \textup{\ref{assD}},
\[
E\bigl[N(\varepsilon)\bigr]=O\biggl(\varepsilon^{-a_C-2}\log\biggl(
\frac{1}{\varepsilon}\biggr)\biggr)\qquad\mbox{as }%
\varepsilon\rightarrow0,
\]
where $a_C$ is a computable constant depending only on $A$.
\end{theorem}

The random variables we need to simulate in the algorithm can be divided
into two parts: first, the random variables used to construct the discrete
random walk $\mathbf{Z}(n)$ for $n\leq T$ and second, the conditional
normals used to bridging between $\mathbf{Z}(n-1)$ and $\mathbf{Z}(n)$.

Since $1(|W|>\eta)$ and $1(|W|\leq\beta)$ are negatively correlated, it
follows that
\[
P\bigl(|W|>\eta| |W|\leq\beta\bigr)\leq P\bigl(|W|>\eta\bigr).
\]
Therefore, the expected number of conditional Gaussian random variables used
for Brownian bridges between $\mathbf{Z}(n-1)$ and $\mathbf{Z}(n)$ is
smaller than the expected number that we would obtain if we use standard
Gaussian random variables instead in steps~3 and 4 in Algorithm
\ref{alg2.1}. Let
$K=\max\{k\dvtx |W_{k}|>\eta_{k}\}\vee K_{0}$ as defined in (\ref
{definiation of
K}). As discussed above, the expected number of truncated Gaussian random
variables needed for each bridge $\bar{B}^{\varepsilon}_{n,i}(\cdot)$ is
bounded by $E[K]$.

Therefore,
\[
E\bigl[N(\varepsilon)\bigr]\leq\bigl(d E[K]+1\bigr) \bigl(E[T]+1\bigr).
\]
To prove Theorem~\ref{th2}, we first need to study $E[K]$ and $E[T]$.

\begin{proposition}\label{pr7}
\[
E[K]=O\biggl(\varepsilon^{-2}\log{ \biggl( \frac{1}{\varepsilon} \biggr) }
\biggr).
\]
\end{proposition}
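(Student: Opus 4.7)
The plan is to decompose $K = \max(K_0, K^{*})$, where $K^{*} = \max\{k : |W^k| > \eta_k\}$, and handle the two pieces separately. The deterministic (but $\epsilon$-dependent) quantity $K_0$ will supply the entire $\epsilon^{-2}\log(1/\epsilon)$ order of growth; the random tail $K^{*}$ will have expectation bounded uniformly in $\epsilon$ thanks to the fast Gaussian tail decay forced by the choice $\eta_k = 4\sqrt{\log k}$.

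First I would carefully bound $K_0$. By its definition we need $\sum_{j=\lceil \log_2 K_0\rceil}^{\infty} 2^{-j/2}\sqrt{j+1} < \epsilon$. Because the summand decays geometrically in $j$, the tail is dominated (up to a constant factor) by its leading term $2^{-J/2}\sqrt{J+1}$ with $J = \lceil \log_2 K_0\rceil$. Solving $2^{-J/2}\sqrt{J+1} \asymp \epsilon$ gives $J = 2\log_2(1/\epsilon) + O(\log\log(1/\epsilon))$, so one may take $K_0 = 2^{J} = O(\epsilon^{-2}\log(1/\epsilon))$. This is just a careful manipulation of a geometric series and is essentially routine.

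Next I would bound $E[K^{*}]$ uniformly in $\epsilon$. Using the standard Gaussian tail inequality $P(|W|>t)\leq 2e^{-t^2/2}$ with $t = \eta_k = 4\sqrt{\log k}$ yields
\[
P(|W^k| > \eta_k) \leq 2 e^{-8\log k} = 2k^{-8}.
\]
A union bound then gives, for every $n\geq 1$,
\[
P(K^{*} \geq n) \leq \sum_{k=n}^{\infty} P(|W^k|>\eta_k) \leq 2\sum_{k=n}^{\infty} k^{-8} \leq C\, n^{-7},
\]
and hence $E[K^{*}] = \sum_{n\geq 1} P(K^{*} \geq n) \leq C' < \infty$ with $C'$ independent of $\epsilon$. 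Combining these two bounds,
\[
E[K] \leq K_0 + E[K^{*}] = O(\epsilon^{-2}\log(1/\epsilon)),
\]
which is the claimed estimate.

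The only non-trivial step is the bookkeeping in Step 1: one has to verify that the smallest admissible $K_0$ consistent with the $\epsilon$ constraint does not grow faster than $\epsilon^{-2}$ times a logarithmic factor, and that the $\sqrt{j+1}$ inside the sum contributes only this logarithm. Steps 2 and 3 are straightforward applications of a Gaussian tail bound and summation. Thus the proposition really reduces to a precise sizing of $K_0$, for which the geometric/exponential decay in $j$ does the heavy lifting.
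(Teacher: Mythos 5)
Your proof is correct and follows essentially the same route as the paper: bound $E[K]$ by the deterministic $K_0$ plus a quantity independent of $\epsilon$ obtained from the polynomial decay of $P(|W^k|>\eta_k)$, and then size $K_0=O(\epsilon^{-2}\log(1/\epsilon))$ from the tail of $\sum_j 2^{-j/2}\sqrt{j+1}$. The only cosmetic differences are that you use the sharper tail exponent $k^{-8}$ (the paper uses $O(k^{-4})$, which suffices) and omit the replacement of $\epsilon$ by $\epsilon/(da)$ for the $d$-dimensional Cholesky construction, which does not affect the order since $d$ and $a$ are fixed.
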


\begin{pf}
Recall that $\eta_{k}=4\sqrt{\log k}$, and let $p_{k}=P(|W^{k}|>\eta
_{k})$. Then
$p_{k}=O(k^{-4})$. Therefore
\begin{eqnarray*}
E[K] & =&\sum_{n=1}^{\infty}P(K>n)\leq
K_{0}+\sum_{n=K_{0}+1}^{\infty}%
\sum_{k=n}^{\infty}p_{k}
\\
& =&K_{0}+\sum_{k=K_{0}+1}^{\infty}k\cdot
p_{k}\leq K_{0}+O\biggl(\sum_{k=1}%
^{\infty}k^{-3}
\biggr).
\end{eqnarray*}
The second term of the left-hand side is finite and independent of
$\varepsilon$
and $K_{0}$.

On the other side,
\[
\sum_{j=\log_{2}{K_{0}}}2^{-j/2}\sqrt{j+1}\leq
\frac{2}{\log2}(\sqrt {K_{0}%
})^{-1}\biggl(
\sqrt{\log_{2}{K_{0}}}+\frac{2}{\log{2}}\biggr).
\]
Therefore, we can choose $K_{0}=O(\varepsilon^{-2}\log{(\frac{1}{\varepsilon})})$
such that $\sum_{j=\log_{2}{K_{0}}}2^{-j/2}\times\break  \sqrt{j+1}<\varepsilon$.

In order to get the approximation within error at most $\varepsilon$ for
the $d$-dimensional process, according to the Cholesky decomposition as
discussed in
Section~\ref{SubWave}, we should replace $\varepsilon$ by $\frac{\varepsilon}{da}$.
Therefore,
\[
E[K]=O\biggl( \biggl( \frac{\varepsilon}{da} \biggr) ^{-2}\log{ \biggl(
\frac
{da}{\varepsilon
} \biggr) }\biggr)=O\biggl(\varepsilon^{-2}\log{ \biggl(
\frac{1}{\varepsilon} \biggr) }\biggr).
\]
\upqed\end{pf}

What remains is to estimate $E[T]$. Let $T_{a}$ be the time before
the algorithm executes step 4 in a single iteration. Using the same notation
as in Algorithm \ref{alg2.1} and a similar argument as in Section~\ref{sec2.4}, we have
\[
E[T]=\frac{E[T_{a}]+E[T_{m}|T_{m}<\infty]+E[M]}{P(T_{m}<\infty)p},
\]
where
\[
p=P\Bigl(\max_{i}Z_{i}^{\varepsilon}(t)<m+
\varepsilon,\forall 0\leq t\leq M |%
\mathbf{Z}(0)=0;\mathbf{S}(n)<m
\Bigr).
\]
As $\mathbf{Z}^{\varepsilon}(t)=\mathbf{S}(n)-n\zeta\mathbf{1}+A\bar
{\mathbf{B}}_n(t-n)$
and the Brownian bridge $\bar{\mathbf{B}}_n(\cdot)$ is independent of $%
\mathbf{S}(\cdot)$, it follows that
\[
p\geq P\Bigl(\max_{i}\max_{t\geq0}Z_{i}(t)<m|
\mathbf{Z}(0)=0\Bigr).
\]

Since $\mathbf{S}(1)$ is a multidimensional Gaussian random vector with
strictly negative drift, assumptions (C1) to (C3) are satisfied. Applying
Proposition~\ref{pr4}, we can get upper bounds for $E[T_{m}|T_{m}<\infty]$, $%
1/P(T_{m}<\infty)$ and $1/P(\max_{i}\max_{t} Z_{i}(t)<m|\mathbf{Z}(0)=0)$,
which depend only on $d, a$ and $\delta$ and thus are independent of $%
\varepsilon$. Besides, the bound for $E[M]$ can be estimated by the same method
as in Proposition~\ref{pr7} in terms of $\zeta=\delta/2$; hence such a bound is also
independent of $\varepsilon$. Therefore, we only need to estimate $E[T_{a}]$.

\begin{proposition}
\label{PropCorThmMain2}$E[T_{a}]=O(\varepsilon^{-a_{C}})$ as $\varepsilon
\rightarrow0$. Here $a_{C}$ only depends on the matrix $A$. Moreover,
in the
special cases where $A_{ij}\geq0$, $a_{C}=d$.
\end{proposition}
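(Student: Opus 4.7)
The plan is to reduce the bound on $E[T_a]$ to a multidimensional small-ball estimate for the driving Brownian motion. From Step 2 of Algorithm 2.1', $T_a$ is essentially the first integer time $n_1$ at which some $n_0<n_1$ satisfies (i) $Z_i^{\epsilon}(s)-Z_i^{\epsilon}(n_0)<\epsilon$ for $n_0\le s\le n_1$, and (ii) $Z_i(n_1)-Z_i(n_0)\le -m$ for every $i$. Since $\|\mathbf{Z}^{\epsilon}-\mathbf{Z}\|_{\infty}<\epsilon$ on compact intervals, replacing $\mathbf{Z}^{\epsilon}$ by $\mathbf{Z}$ and $\epsilon$ by $2\epsilon$ affects only constants, and by stationarity of the increments of $\mathbf{Z}$, the probability that such a ``success pattern'' starts at any given integer time has a common value $p_{\epsilon}$. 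I will show $p_{\epsilon}\ge c\,\epsilon^{a_C}$; then dividing time into disjoint blocks of fixed length $T_0=O(m/\delta)$ (independent of $\epsilon$) produces independent trials, and a standard geometric tail bound yields $E[T_a]=O(1/p_{\epsilon})=O(\epsilon^{-a_C})$.

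The main work is the lower bound on $p_{\epsilon}$. First, pick $T_0$ large enough that the negative drift forces $Z_i(T_0)\le -2m$ for every $i$ with probability bounded below uniformly in $\epsilon$. Combining this with a joint small-ball argument reduces the task to lower bounding
\[
P\!\left(\sup_{0\le t\le T_0}Z_i(t)\le\epsilon,\ \forall i\right).
\]
Using the Cholesky decomposition $\mathbf{Z}(t)=\mathbf{\mu}t+A\mathbf{W}(t)$ with $W_1,\ldots,W_d$ independent standard Brownian motions, the classical one-dimensional small-ball / reflection-principle estimate gives, for each $i$,
\[
P\!\left(\sup_{0\le t\le T_0}\bigl(\mu_i t+(A\mathbf{W})_i(t)\bigr)\le\epsilon\right)=\Theta(\epsilon),\qquad\epsilon\searrow0,
\]
so each component contributes a factor of order $\epsilon$.

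When $A_{ij}\ge 0$, each event $F_i=\{\sup_{0\le t\le T_0}(\mu_i t+(A\mathbf{W})_i(t))\le\epsilon\}$ is a decreasing functional of every driving path $W_j$. The FKG inequality, applied to the product Wiener measure on $(W_1,\ldots,W_d)$ via a discrete-time approximation, then yields $P(\cap_i F_i)\ge\prod_i P(F_i)\ge c_0\,\epsilon^d$, giving $a_C=d$ in this case. In the general case, $\cap_i F_i$ is a diameter $O(\epsilon)$ neighborhood of the origin for an affine image of the Gaussian process $\mathbf{W}$, and its probability scales like $\epsilon^{a_C}$ where $a_C$ is the multidimensional small-ball exponent, a constant computable from $AA^T$.

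The main obstacle is pinning down $a_C$ explicitly in the general (non-FKG) case, where monotone coupling fails and one must appeal directly to multivariate Gaussian small-ball asymptotics for correlated processes. The geometric renewal argument across disjoint blocks, the one-dimensional overshoot / small-ball estimate, and the coupling that turns the condition ``$Z_i(T)\le -m$ for all $i$'' into a uniformly positive conditional probability are all routine; the only genuinely non-trivial ingredient is the multivariate small-ball bound whose exponent $a_C$ is determined by $A$.
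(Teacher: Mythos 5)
Your reduction of $E[T_a]$ to a per-block success probability via disjoint blocks of length $O(m/\delta)$ and a geometric trial count is exactly the paper's argument (the paper's $N_b$), and your treatment of the non-negatively correlated case is a valid alternative: where the paper simply forces each independent driving component $B_j$ to stay below $\epsilon/(ad)$ and uses the reflection principle, you apply FKG/Harris to the decreasing events $F_i$, and both give $p\geq c\,\epsilon^{d}$, hence $a_C=d$ there. However, for the general case your proposal has a genuine gap, and it is precisely the part the proposition is really asserting: that there \emph{exists} a finite exponent $a_C$, depending only on $A$ (equivalently on $\Sigma=AA^T$), with $p\geq c\,\epsilon^{a_C}$. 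You describe the event as a ``diameter $O(\epsilon)$ neighborhood of the origin'' and invoke an unspecified ``multidimensional small-ball exponent,'' but this is not a small-ball event (those decay like $\exp(-c/\epsilon^{2})$, not polynomially); it is the event that a standard Brownian motion does not exit the cone $C=\{\mathbf{y}:A\mathbf{y}\leq\mathbf{\epsilon}\}$, whose vertex $A^{-1}\mathbf{\epsilon}$ is at distance $O(\epsilon)$ from the starting point, over a fixed time horizon. The paper closes exactly this gap by citing cone non-exit asymptotics: for $d=2$, Burkholder gives $a_C=\pi/\theta$ with $\theta$ the angle of the cone determined by $A^{-1}$, and for $d\geq3$, DeBlassie's result gives $P(\tau_C>T)\asymp\Vert A^{-1}\mathbf{\epsilon}\Vert^{a_C}$ with $a_C$ determined by the principal eigenvalue of the Laplace--Beltrami operator on $S^{d-1}\cap C$. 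Without an input of this type your argument does not establish the existence (let alone computability) of $a_C$ in the general case.

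A further caution: your intermediate heuristic that ``each component contributes a factor of order $\epsilon$'' suggests $p\asymp\epsilon^{d}$ in general, which is false once correlations are allowed; e.g.\ for $d=2$ with strongly negatively correlated components the cone angle $\theta$ is small and $a_C=\pi/\theta$ can be arbitrarily large, so the true exponent is governed by the cone geometry, not by a componentwise product.
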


\begin{pf}
Recall that $\mathbf{Z}(t)=-\bolds{\mu}t+A\mathbf{B}(t)$ and $\mu
_i>\delta=2\zeta>0$ as given in Assumption \ref{assD}. We divide the path
of $\mathbf{Z}(t)$ into segments with length $2(m+\varepsilon)/\zeta$,
\[
\biggl\{ \biggl(Z\biggl(k\cdot\frac{2(m+\varepsilon)}{\zeta}+s\biggr)\dvtx 0\leq s\leq
\frac
{2(m+\varepsilon
)}{\zeta} \biggr)\dvtx k\geq0 \biggr\}.
\]
Let
\begin{eqnarray*}
&&N_{b}=\min\biggl\{k\dvtx A\mathbf{B}\biggl(k\cdot
\frac{2(m+\varepsilon)}{\zeta}+s\biggr)-A\mathbf {B}\biggl(k\cdot\frac{2(m+\varepsilon)}{\zeta}\biggr)\leq
\bolds{\varepsilon}\\
&&\hspace*{170pt}\mbox{for all } 0\leq s\leq\frac{2(m+\varepsilon
)}{\zeta} \biggr\}.
\end{eqnarray*}
By independence and stationarity of the increments of Brownian motion, $N_{b}$
is a geometric random variable with parameter
\[
p=P \biggl( A\mathbf{B}(s)\leq \bolds{\varepsilon}\mbox{ for all } 0\leq s\leq
\frac{2(m+\varepsilon
)}{\zeta} \biggr).
\]
On the other hand, since $-\mu_{i}<-2\zeta$, we have:
\begin{longlist}[(1)]

\item[(1)] $Z_{i}(N_{b}\cdot\frac{2(m+\varepsilon)}{\zeta}+s)\leq
Z_{i}(N_{b}%
\cdot\frac{2(m+\varepsilon)}{\zeta})+\varepsilon$, for all $0\leq s\leq
\frac{2(m+\varepsilon)}{\zeta}$.

\item[(2)] $Z_{i}((N_{b}+1)\cdot\frac{2(m+\varepsilon)}{\zeta})\leq
Z_{i}(N_{b}%
\cdot\frac{2(m+\varepsilon)}{\zeta})-m$.
\end{longlist}

Therefore, Algorithm \ref{alg2.1} should execute step 4 after at most $\frac
{2(m+\varepsilon)}{\zeta}(N_{b}+1)$ units of time in a single iteration,
\[
E[T_{a}]\leq\frac{2(m+\varepsilon)}{\zeta}E[N_{b}+1]=
\frac{2(m+\varepsilon)}%
{\zeta}\biggl(1+\frac{1}{p}\biggr).
\]
From this inequality, it is now sufficient to show that $p=O(\varepsilon
^{a_{C}%
})$.

Note that the set $C=\{\mathbf{y}\in\mathbb{R}^{d}\dvtx A\mathbf{y}\leq
\bolds{\varepsilon}\}$ forms a cone with vertex $A^{-1}\bolds
{\varepsilon}$ in
$\mathbb{R}^{d}$ since $A$ is of full rank under Assumption \ref{assD}. Define
$\tau_{C}=\inf\{t\geq0\dvtx \mathbf{B}(t)\notin C\}$ given $\mathbf
{B}(0)=0$, then
\[
p=P\biggl(\tau_{C}>\frac{2(m+\varepsilon)}{\zeta}\biggr).
\]

If $d=2$, it is proved by \citet{Burkholder1977} that $a_{C}=\frac{\pi
}{\theta}$ where $\theta\in{}[0,\pi)$ is the angle formed by the column
vectors of $A^{-1}$. Therefore, we can compute explicitly that
\[
\theta=\arccos \biggl(-\frac{A_{11}A_{21}+A_{12}A_{22}}{\sqrt
{(A_{11}^{2}+A_{12}^{2})(A_{21}
^{2}+A_{22}^{2})}} \biggr),
\]
which only depends on $A$.

On the other hand, if $d\geq3$, applying the results on exit times for
Brownian motions given by Corollary~1.3 in \citet{DeBlassie},
\[
P\biggl(\tau_{C}>\frac{2(m+\varepsilon)}{\zeta}\biggr)\sim u\cdot\bigl\Vert
A^{-1}%
\bolds{\varepsilon}\bigr\Vert^{a_{C}}%
\]
as $\varepsilon\rightarrow0$. Here $\Vert\cdot\Vert$ represent the
Euclidian norm,
and $u$ is some constant independent of $\varepsilon$. The rate $a_{C}$ is
determined by the principal eigenvalue of the Laplace--Beltrami
operator on
$(\mathbf{S}^{d-1}\cap C)$, where $\mathbf{S}^{d-1}$ is a unit sphere centered
at the vertex of $C$, namely $A^{-1}\varepsilon$. The principal eigenvalue only
depends on the geometric features of $C$, and it is independent of
$\varepsilon$;
hence so is $a_{C}$. Since $A$ is given, we have
\[
P\biggl(\tau_{C}>\frac{2(m+\varepsilon)}{\zeta}\biggr)=O\bigl(
\varepsilon^{a_{C}}\bigr)\qquad\mbox{as }\varepsilon\rightarrow0.
\]

Computing $a_{C}$ for $d\geq3$ is not straightforward in general. However,
when $A_{ij}\geq0$, we can estimate $a_{C}$ from first principles.
Indeed, if
$A_{ij}\geq0$ and we let $a=\max A_{ij}$, we have that
\[
C=\bigl\{\mathbf{y}\in\mathbb{R}^{d}\dvtx A\mathbf{y}\leq\bolds{
\varepsilon}%
\bigr\}\subset\biggl\{\mathbf{y}\in\mathbb{R}^{d}
\dvtx y_{i}\leq\frac{\varepsilon}{ad}\biggr\}.
\]
As the coordinates of $\mathbf{B}(t)$ are independent,
\[
p\geq P \biggl( \max_{0\leq t\leq{2(m+\varepsilon)}/{\zeta}}B(t)\leq \frac{\varepsilon}{ad} \biggr)
^{d},
\]
where $B(\cdot)$ is a standard Brownian motion on real line.

Applying the reflection principle, we have
\begin{eqnarray*}
&&P \biggl( \max_{0\leq t\leq{2(m+\varepsilon)}/{\zeta} }B(t)\leq\frac{\varepsilon
}{ad} \biggr) \\
&&\qquad=\int
_{-{\varepsilon}/{(ad)}}^{{\varepsilon}/{(ad)}}\frac{1}%
{\sqrt{2\pi({2(m+\varepsilon)}/{\zeta})}}\exp{ \biggl( -
\frac{x^{2}}%
{2({2(m+\varepsilon)}/{\zeta})} \biggr) }\\
&&\qquad=O(\varepsilon).
\end{eqnarray*}
As a result, $p=O(\varepsilon^{d})$ when the correlations are all nonnegative.
\end{pf}

Given these propositions, we can now prove the main result in this part.

\begin{pf*}{Proof of Theorem~\ref{ThmMain2}}
As we have discussed,
\[
E\bigl[N(\varepsilon)\bigr]\leq\bigl(dE[K]+1\bigr) \bigl(E[\tau_{\varepsilon}]+1
\bigr).
\]
First, by Proposition~\ref{pr7},
$
E[K]=O(\varepsilon^{-2}\log{(\frac{1}{\varepsilon})})$.
Besides, as discussed above,
\[
E[T]\leq\frac{E[T_{a}]+E[T_{m}|T_{m}<\infty]+E[M]}%
{P(T_{m}<\infty)P(\max_{i}\max_{t\geq0} Z_{i}(t)<m|\mathbf{Z}(0)=0)}.
\]
According to Proposition~\ref{PropCorThmMain2}, $E[T_{a}]=O(\varepsilon^{-a_{C}})$, and
$a_{C}$ is a
constant when $A$ is fixed. In the end, as we have discussed,
$E[T_{m}|T_{m}<\infty]$,
$P(T_{m}<\infty)$, $P(\max_{i}\max_{t} Z_{i}(t)<m|\mathbf{Z}(0)=0)$ and $E[M]$
are independent of $\varepsilon$. Therefore,
\[
E[T]=O\bigl(\varepsilon^{-a_{C}}\bigr).
\]
In sum, we have
\[
E\bigl[N(\varepsilon)\bigr]=O\biggl(\varepsilon^{-a_{C}-2}\log{ \biggl(
\frac{1}{\varepsilon} \biggr) }\biggr).
\]
\upqed\end{pf*}

\section{Numerical results}
\label{SectionNumerics}

We first implemented Algorithm \ref{alg1} in order to generate exact samples
from the
steady-state distribution of stochastic fluid networks, and then we implemented
Algorithm \ref{alg2}. Our implementations were performed in Matlab. In all the
experiments we simulated 10,000 independent replications, and we
displayed our
estimates with a margin of error obtained using a 95\% confidence interval
based on the central limit theorem.

For the case of stochastic fluid networks, we considered a 10-station system
in tandem. So, $Q_{i,i+1}=1$ for $i=1,2,\ldots,9$ and $Q_{10,j}=0$ for
all $%
j=1,\ldots,10$. We assume the arrival rate $\lambda=1$ and the job
sizes are
exponentially distributed with unit mean. The service rates $ ( \mu
_{1},\ldots,\mu_{10} ) ^{T}$ are given by $%
(1.55,1.5,1.45,1.4,1.35,1.3,1.25,1.2,1.15,1.1)$. We are interested in
computing the steady-state mean and the second moment of the workload at
each station (i.e., $E[Y_{i} ( \infty ) ]$ and $E[Y_{i} (
\infty ) ^{2}]$ for $i=1,2,\ldots,10$). For a network of this
type, it
turns out that the true values of the quantities we are interested in
can be
computed from the corresponding Laplace transforms as given in %
\citet{Debickietal2007}.

Both the simulation results and the true values are reported in
Table~\ref{tab1}. The
procedure took a few minutes (less than 5) on a desktop, which is quite a
reasonable time.

\begin{table}
\caption{Unbiased estimates of $E[Y_{i} ( \infty ) ]$ and $%
E[Y_{i}^{2} ( \infty ) ]$ for a network with ten stations in
tandem}\label{tab1}
\begin{tabular*}{\textwidth}{@{\extracolsep{\fill}}lcd{1.4}cd{2.4}@{}}
\hline
& \multicolumn{2}{c}{$\bolds{E[Y_{i} ( \infty ) ]}$} & \multicolumn{2}{c@{}}{$\bolds{E[Y_{i}^{2} ( \infty )
]}$} \\[-4pt]
& \multicolumn{2}{c}{\hrulefill} & \multicolumn{2}{c@{}}{\hrulefill} \\
\textbf{Station} & \textbf{Simulation result} &  \multicolumn{1}{c}{\textbf{True value}} &
\textbf{Simulation result}   &\multicolumn{1}{c@{}}{\textbf{True
value}}\\
\hline
\phantom{0}1 & 1.7919${}\pm{}$0.0521&       1.8182 & 10.2755${}\pm{}$0.5289&       10.2479 \\
\phantom{0}2 & 0.1761${}\pm{}$0.0068&       0.1818 & 0.1511${}\pm{}$0.0170&       0.1642 \\
\phantom{0}3 & 0.2171${}\pm{}$0.0083&       0.2222 & 0.2242${}\pm{}$0.0224&       0.2382 \\
\phantom{0}4 & 0.2706${}\pm{}$0.0102&       0.2778 & 0.3462${}\pm{}$0.0339&       0.3610 \\
\phantom{0}5 & 0.3516${}\pm{}$0.0131&       0.3571 & 0.5717${}\pm{}$0.0590&       0.5778 \\
\phantom{0}6 & 0.4737${}\pm{}$0.0171&       0.4762 & 0.9840${}\pm{}$0.0871&       0.9921\\
\phantom{0}7 & 0.6632${}\pm{}$0.0233&       0.6667 & 1.8472${}\pm{}$0.1513&       1.8715\\
\phantom{0}8 & 1.0033${}\pm{}$0.0345&       1.0000 & 4.1004${}\pm{}$0.3377&       4.0300\\
\phantom{0}9 & 1.6497${}\pm{}$0.0542&       1.6667 & 10.3734${}\pm{}$0.7823&       10.6065\\
10 & 3.3200${}\pm{}$0.1040&      3.3333 & 39.2015${}\pm{}$2.9950&       39.3631\\
\hline
\end{tabular*}
\end{table}

We then implemented a two-dimensional RBM example. Let us denote the RBM
by $%
\mathbf{Y}(t)$. The parameters to specify $\mathbf{Y}$ are as follows: drift
vector $\mu=(-1,-1)$, covariance matrix $\Sigma=[1,0;0,1]$ and reflection
matrix $R=[1,-0.2;-0.2,1]$. For this so-call symmetric RBM, one could
compute in close that $E[Y_{1}(\infty)]=E[Y_{2}(\infty)]=5/12\simeq
0.4167$; see, for instance, \citet{DaiHarrison1992}. The output of our
simulation
algorithm is reported in Table~\ref{tab2}.
%
\begin{table}[b]
\caption{Estimates of $E[Y_{i} ( \infty ) ]$ for a 2-dimensional
RBM with precision $\protect\varepsilon=0.01$}\label{tab2}
\begin{tabular*}{\textwidth}{@{\extracolsep{\fill}}lcc@{}}
\hline
& \textbf{Simulation result} & \textbf{True value}\\
\hline
$E[Y_1(\infty)]$ & 0.4164${}\pm{}$0.0137 & 0.4167\\
$E[Y_2(\infty)]$ & 0.4201${}\pm{}$0.0131 & 0.4167\\
\hline
\end{tabular*}
\end{table}

Our implementations here are given with the objective of verifying
empirically the validity of the algorithms proposed. We stress that a direct
implementation of Algorithm \ref{alg2}, although capable of ultimately producing
unbiased estimations of the expectations of RBM, might not be
practical. The
simulations took substantially more time to be produced than those reported
for the stochastic fluid models. This can be explained by the
dependence on $%
\varepsilon$ in Theorem~\ref{ThmMain2}. The bottleneck in the
algorithm is finding a time at which both stations are close to
$\varepsilon
$. An efficient algorithm based on suitably trading a strongly controlled
bias with variance can be used to produce faster running times; we
expect to
report this algorithm in the future.

\section*{Acknowledgments}
The authors thank Offer Kella for pointing out Lem\-ma~\ref{LmK_W_Comp} and
thank Amy Biemiller for her editorial assistance. The authors thank the
Editor and referees for their useful comments and suggestions.

%


%





\printaddresses
\end{document}